	%%%%%%%%%%%%%%%%%%%%%%%%%%%%%%%%%%%%%%%%%%%%%%%%%%%%%%%%%%%%%%%%%%%%%
%%                                                                 %%
%% Please do not use \input{...} to include other tex files.       %%
%% Submit your LaTeX manuscript as one .tex document.              %%
%%                                                                 %%
%% All additional figures and files should be attached             %%
%% separately and not embedded in the \TeX\ document itself.       %%
%%                                                                 %%
%%%%%%%%%%%%%%%%%%%%%%%%%%%%%%%%%%%%%%%%%%%%%%%%%%%%%%%%%%%%%%%%%%%%%

%%\documentclass[referee,sn-basic]{sn-jnl}% referee option is meant for double line spacing

%%=======================================================%%
%% to print line numbers in the margin use lineno option %%
%%=======================================================%%

%%\documentclass[lineno,sn-basic]{sn-jnl}% Basic Springer Nature Reference Style/Chemistry Reference Style

%%======================================================%%
%% to compile with pdflatex/xelatex use pdflatex option %%
%%======================================================%%

%%\documentclass[pdflatex,sn-basic]{sn-jnl}% Basic Springer Nature Reference Style/Chemistry Reference Style

%%\documentclass[sn-basic]{sn-jnl}% Basic Springer Nature Reference Style/Chemistry Reference Style
%\let\nofiles\relax
\documentclass[sn-mathphys,Numbered]{sn-jnl}% Math and Physical Sciences Reference Style
%\documentclass[sort,numbers]{sn-jnl}
%%\documentclass[sn-aps]{sn-jnl}% American Physical Society (APS) Reference Style
%%\documentclass[sn-vancouver]{sn-jnl}% Vancouver Reference Style
%%\documentclass[sn-apa]{sn-jnl}% APA Reference Style
%%\documentclass[sn-chicago]{sn-jnl}% Chicago-based Humanities Reference Style
%%\documentclass[sn-standardnature]{sn-jnl}% Standard Nature Portfolio Reference Style
%%\documentclass[default]{sn-jnl}% Default
%%\documentclass[default,iicol]{sn-jnl}% Default with double column layout

%%%% Standard Packages
%%<additional latex packages if required can be included here>
%%%%

%%%%%=============================================================================%%%%
%%%%  Remarks: This template is provided to aid authors with the preparation
%%%%  of original research articles intended for submission to journals published 
%%%%  by Springer Nature. The guidance has been prepared in partnership with 
%%%%  production teams to conform to Springer Nature technical requirements. 
%%%%  Editorial and presentation requirements differ among journal portfolios and 
%%%%  research disciplines. You may find sections in this template are irrelevant 
%%%%  to your work and are empowered to omit any such section if allowed by the 
%%%%  journal you intend to submit to. The submission guidelines and policies 
%%%%  of the journal take precedence. A detailed User Manual is available in the 
%%%%  template package for technical guidance.
%%%%%=============================================================================%%%%

%\jyear{2021}%

\usepackage{graphicx}
\usepackage{pythonhighlight}
\usepackage{amsmath,amssymb,amsthm}
\usepackage{mathtools}
\usepackage{pythonhighlight}
\usepackage{subfigure}
\usepackage{float}
\usepackage{algorithm}
\usepackage{algorithmicx}
\usepackage{algpseudocode}
\usepackage{enumerate}

\usepackage{multirow}%
\usepackage{amsfonts}%
\usepackage{mathrsfs}%
\usepackage[title]{appendix}%
\usepackage{xcolor}%
\usepackage{textcomp}%
\usepackage{manyfoot}%
\usepackage{booktabs}%
\usepackage{listings}%

%\usepackage[sort,numbers]{natbib}

%% as per the requirement new theorem styles can be included as shown below
\theoremstyle{thmstyleone}%
\newtheorem{theorem}{Theorem}[section]%  meant for continuous numbers
%%\newtheorem{theorem}{Theorem}[section]% meant for sectionwise numbers
%% optional argument [theorem] produces theorem numbering sequence instead of independent numbers for Proposition
\newtheorem{proposition}{Proposition}[section]% 
\newtheorem{lemma}{Lemma}[section]% 
\newtheorem{assumption}{Assumption}[section]% 

\theoremstyle{thmstyletwo}%
\newtheorem{remark}{Remark}%

\theoremstyle{thmstylethree}%
\newtheorem{definition}{Definition}[section]%

\numberwithin{equation}{section}
%\numberwithin{eqnarray}{section}

\raggedbottom
%%\unnumbered% uncomment this for unnumbered level heads

\begin{document}

\title[Regularized Proximal Newton-Type Methods for Manifold Optimization]{An Adaptive Regularized Proximal Newton-Type Methods for Composite Optimization over the Stiefel Manifold}

%%=============================================================%%
%% Prefix	-> \pfx{Dr}
%% GivenName	-> \fnm{Joergen W.}
%% Particle	-> \spfx{van der} -> surname prefix
%% FamilyName	-> \sur{Ploeg}
%% Suffix	-> \sfx{IV}
%% NatureName	-> \tanm{Poet Laureate} -> Title after name
%% Degrees	-> \dgr{MSc, PhD}
%% \author*[1,2]{\pfx{Dr} \fnm{Joergen W.} \spfx{van der} \sur{Ploeg} \sfx{IV} \tanm{Poet Laureate} 
%%                 \dgr{MSc, PhD}}\email{iauthor@gmail.com}
%%=============================================================%%

\author[1]{\fnm{Qinsi} \sur{Wang}}\email{qinsiwang20@fudan.edu.cn}

\author*[1]{\fnm{Wei Hong} \sur{Yang}}\email{whyang@fudan.edu.cn}
%\equalcont{These authors contributed equally to this work.}
%
%\author[1,2]{\fnm{Third} \sur{Author}}\email{iiiauthor@gmail.com}
%\equalcont{These authors contributed equally to this work.}

\affil*[1]{\orgdiv{School of Mathematical Sciences}, \orgname{Fudan University}, \orgaddress{\street{220 Handan Street}, \city{Shanghai}, \postcode{200433},\country{China}}}
%
%\affil[2]{\orgdiv{Department}, \orgname{Organization}, \orgaddress{\street{Street}, \city{City}, \postcode{10587}, \state{State}, \country{Country}}}
%
%\affil[3]{\orgdiv{Department}, \orgname{Organization}, \orgaddress{\street{Street}, \city{City}, \postcode{610101}, \state{State}, \country{Country}}}

%%==================================%%
%% sample for unstructured abstract %%
%%==================================%%

\abstract{
Recently, the proximal Newton-type method and its variants have been generalized to solve composite optimization problems over the Stiefel manifold whose objective function is the summation of a smooth function and a nonsmooth function.
In this paper, we propose an adaptive quadratically regularized proximal quasi-Newton method, named ARPQN, to solve this class of problems.
Under some mild assumptions, the global convergence, the local linear convergence rate and the iteration complexity of ARPQN are established. 
Numerical experiments and comparisons with other state-of-the-art methods indicate that ARPQN is very promising.
We also propose an adaptive quadratically regularized proximal Newton method, named ARPN.
It is shown the ARPN method has a local superlinear convergence rate under certain reasonable assumptions, which demonstrates attractive convergence properties of regularized proximal Newton methods.
}

\keywords{Proximal Newton-type method. Regularized quasi-Newton method. Stiefel manifold. Linear convergence. Superlinear convergence.}

\pacs[Mathematics Subject Classification]{90C30, 90C53, 65K05}

%%\pacs[JEL Classification]{D8, H51}

%%\pacs[MSC Classification]{35A01, 65L10, 65L12, 65L20, 65L70}

\maketitle

\section{Introduction}
\label{sec:intro}

%The introduction introduces the context and summarizes the
%manuscript. It is importantly to clearly state the contributions of
%this piece of work. The next two paragraphs are text filler,
%generated by the \texttt{lipsum} package.

Composite optimization problems over Riemannian manifolds have received increasing attention in many application fields.
The objective function of such problems is the summation of a smooth function and a nonsmooth function.
In this paper, we consider the following composite optimization problem over the Stiefel manifold $\mathrm{St}(n,r):=\{X\in\mathbb{R}^{n\times r}:X^TX=I_r\}$, which can be formulated as
\begin{equation}
\label{eq:prob}
\mathop{\min}\limits_{X\in\mathrm{St}(n,r)} F(X):=f(X)+h(X),
\end{equation}
where $f:\mathbb{R}^{n\times r}\to\mathbb{R}$ is smooth and $h:\mathbb{R}^{n\times r}\to\mathbb{R}$ is convex but nonsmooth.
Problem \eqref{eq:prob} has arisen in various applications, such as compressed modes problems \cite{cm2013}, sparse principal component analysis \cite{spca2006}, clustering problems \cite{clustering2022} and so on. For more details about applications of composite optimization over the Stiefel manifold, the reader is referred to \cite{absil2008,absil2017,mashiqian2020,dingchao2022}.

Composite optimization with manifold constraints has been extensively studied in recent years and the corresponding methods can be divided into four parts: subgradient methods, operator splitting methods, augmented Lagrangian methods and proximal-type methods.
Here, we review these methods as follows and refer the reader to \cite{mashiqian2020,dingchao2022} for more details.
In \cite{subgradient1998}, Ferreira and Oliveira generalize the subgradient method from Euclidean space to Riemannian manifolds.
Borckmans et al. \cite{subgradient2014} consider nonsmooth optimization problems with equality and inequality constraints.
They show that the nonlinear equality constraint can be handled in the framework of Riemannian
manifolds and develop a feasible subgradient descent algorithm.
In \cite{subgradient2015}, Gorhs and Hosseini propose an $\varepsilon$-subgradient method for minimizing a locally Lipschitz function over Riemmanian manifolds and prove its global convergence.

Operator splitting methods split \eqref{eq:prob} into several terms, each of which
is easier to solve than the original problem.
The most common framework of operator splitting methods is based on the alternating direction method of multipliers (ADMM).
Lai et al. \cite{soc2014} propose a three-block ADMM to solve \eqref{eq:prob}.
In \cite{madmm2016}, Kovnatsky et al. propose a method which uses a two-block ADMM to solve the composite optimization problems.
Augmented Lagrangian (AL) algorithms are well-known numerical methods for constrained optimization.
There are several research works which use AL algorithms to solve \eqref{eq:prob}.
In \cite{yyx2019}, Gao et al. propose a parallelized proximal linearized AL algorithm.
Zhou et al. \cite{dingchao2022} design a manifold-based AL method for \eqref{eq:prob},
in which the AL subproblem is solved by a globalized semismooth Newton method.
It is worth mentioning that the AL method proposed in \cite{cxj2017} can be used for solving \eqref{eq:prob}.

Another classical approach to composite optimization problems is the proximal gradient method.
In \cite{huang2021riemannian}, a Riemannian proximal gradient (RPG) algorithm is proposed by Huang and Wei.
They further propose an inexact version of RPG in \cite{irpg2023}, which solves the Riemannian proximal mapping inexactly.
In \cite{mashiqian2020}, Chen et al. present a retraction-based proximal gradient method, named ManPG,
which can be viewed as an inexact RPG method.
Wang and Yang \cite{manpqn2023} propose a proximal quasi-Newton method, which can accelerate the ManPG method.
In \cite{rpn2023}, a Riemannian proximal Newton method is proposed by Si et al.
and a local superlinear convergence rate is established for their method.

Recently, there are several works which use regularized Newton-type methods to solve unconstrained composite optimization problems.
Ghanbari and Scheinberg \cite{reg2018} propose a general inexact regularized proximal quasi-Newton method and prove its global convergence.
In \cite{reg2019}, Grapiglia and Nesterov study an accelerated regularized Newton method for composite optimization which requires the smooth part has a H{\" o}lder-continuous Hessian and analyze the iteration complexity.
Their method is a natural extension of the algorithm proposed in \cite{reg2017},
whose origin can be traced back to the work on the cubic regularization method \cite{cubic2006}.
In \cite{reg2022}, Aravkin et al. propose a proximal quasi-Newton trust-region method for composite optimization,
which can also be viewed as a regularized proximal quasi-Newton method.

In each iteration,
compared with traditional Newton-type methods,
the subproblems of regularized Newton-type methods can be solved in fewer steps due to the regularization term.
When performing a line search along the descent direction,
regularized Newton-type methods usually need less number of trials during the backtracking procedure.
These advantages make regularized Newton-type methods appropriate for manifold optimization problems.
Recognizing this, Hu et al. \cite{wzw2018} generalize the regularized Newton method for smooth optimization from Euclidean space to Riemannian manifolds.
Numerical experiments indicate that the regularized Riemannian Newton method is very promising.
To solve composite optimization problems over the Stiefel manifold,
we propose an adaptive quadratically regularized proximal quasi-Newton method, named ARPQN.
Specifically, we construct a quadratic subproblem with a regularization term to approximate \eqref{eq:prob} at the iterate $X_k\in\mathcal{M}$,
which can be written as:
\begin{equation}
\label{eq:sub_prob}
\mathop{\min}_{V\in {\rm T}_{X_k}\mathcal{M}}
\phi_k(V):=\langle\nabla f(X_k),V\rangle
+\frac{1}{2}\langle (\mathcal{B}_k+\sigma_kI)[V],V\rangle
+h(X_k+V),
\end{equation}
where $\mathcal{M}$ is used to denote $\mathrm{St}(n,r)$, and $\mathcal{B}_k$ is an approximate operator of the Hessian of $f$ at $X_k$.
The solution of \eqref{eq:sub_prob} is employed as the search direction.
During the iteration process, the regularization parameter $\sigma_k$ is adjusted adaptively to accelerate the convergence of ARPQN.
Numerical results demonstrate that the ARPQN method runs faster than the proximal quasi-Newton algorithm proposed in \cite{manpqn2023}.
We also propose an adaptive quadratically regularized proximal Newton method, named ARPN,
whose subproblem is formed just by replacing the term $h(X_k+V)$ by $h(\mathbf{R}_{X_k}(V))$ in \eqref{eq:sub_prob}.
Under some reasonable assumptions, the local superlinear convergence rate of ARPN is established.

The rest of this paper is organized as follows. Notations and preliminaries are introduced in section \ref{sec:notation}.
The adaptive quadratically regularized proximal quasi-Newton algorithm
(ARPQN) together with its convergence and complexity analysis is presented in section \ref{sec:alg} in details.
Next, we propose an adaptive quadratically regularized proximal Newton method (ARPN) and establish its local superlinear convergence rate in section \ref{sec:new_algo}.
Numerical results of ARPQN compared with other algorithms on test problems are shown in section \ref{sec:5}.
Finally, we end with a brief discussion in section \ref{sec:6}.

\section{Notations and Preliminaries}
\label{sec:notation}

In the following, we briefly introduce the notations, definitions and preliminary concepts about manifold optimization which will be used throughout this paper.

For a manifold $\mathcal{M}$, the tangent space to $\mathcal{M}$ at $X$ is denoted as ${\rm T}_X\mathcal{M}$, which is the set of all tangent vectors to $\mathcal{M}$ at $X$.
The tangent bundle ${\rm T}\mathcal{M}:=\cup_{X\in\mathcal{M}}{\rm T}_X\mathcal{M}$ consists of all tangent vectors to $\mathcal{M}$. The manifold $\mathcal{M}$ is called a Riemannian manifold if its tangent spaces are endowed with a smoothly varying inner product $\langle\xi,\eta\rangle_X$, where $\xi,\eta\in{\rm T}_X\mathcal{M}$.
The induced norm is $\|\xi\|_X=\langle\xi,\xi\rangle_X^{1/2}$.
For ease of notation, we use $\|\xi\|$ and $\langle\xi,\eta\rangle$ instead of $\|\xi\|_X$ and $\langle\xi,\eta\rangle_X$, respectively, if no ambiguity arises.

In \cite[p.46]{absil2008}, the gradient of a smooth function $f$ at $X\in\mathcal{M}$, denoted by $\mathrm{grad}f(X)$, is defined as the unique element of $\mathrm{T}_X\mathcal{M}$ satisfying $\langle \mathrm{grad} f(X),\xi\rangle = \mathrm{D} f(X)[\xi]$ for all~$\xi\in{\rm T}_X\mathcal{M}$,
where $\mathrm{D}f(X)[\xi]$ is the directional derivative of $f$ at $X$ along $\xi$.

\vskip 2mm

\begin{definition}
(Retraction \cite[Definition 4.1.1]{absil2008})
\label{retraction}
A retraction on a manifold $\mathcal{M}$ is a smooth mapping ${\bf R}$ from the tangent bundle ${\rm T}\mathcal{M}$ onto $\mathcal{M}$ with the following properties. Let ${\bf R}_X$ denote the restriction of ${\bf R}$ to ${\rm T}_X\mathcal{M}$.
\begin{enumerate}
\item[(1)] ${\bf R}_X(0_X)=X$, where $0_X$ denotes the zero element of ${\rm T}_X\mathcal{M}$.

\item[(2)] With the canonical identification ${\rm T}_{0_X}({\rm T}_X\mathcal{M})\simeq{\rm T}_X\mathcal{M}$, ${\bf R}_X$ satisfies
\begin{eqnarray}
\nonumber
%\label{r}
{\rm D}{\bf R}_X(0_X)={\bf id}_{{\rm T}_X\mathcal{M}},
\end{eqnarray}
where ${\rm D}{\bf R}_X(0_X)$ denotes the differential of the retraction ${\bf R}_X$ at the zero element $0_X\in{\rm T}_X\mathcal{M}$ and ${\bf id}_{{\rm T}_X\mathcal{M}}$ denotes the identity mapping on ${\rm T}_X\mathcal{M}.$
\end{enumerate}
\end{definition}

\begin{remark}
If $\mathcal{M}$ is an embedded submanifold of a Euclidean space $E$,
we can extend $\mathbf{R}_X$ to a smooth mapping $R(X,\xi)$ from $\mathcal{M}\times E$ to $\mathcal{M}$,
which satisfies
$R(X,\xi)=\mathbf{R}_X(\xi)$ for all $\xi\in\mathrm{T}_X\mathcal{M}$.
If no confusion, we also use $\mathbf{R}_X(\xi)$ to denote $R(X,\xi)$ in this paper.
The Euclidean differential of $\mathbf{R}_X$ at $\xi\in E$ is denoted as $\mathbb{D}\mathbf{R}_X(\xi)$.
\end{remark}

\vskip 2mm

\begin{proposition}
\label{prop2_1}
(\cite{2order_boudness2016})
Suppose $\mathcal{M}$ is a compact embedded submanifold of a Euclidean space $E$,
and ${\bf R}$ is a retraction.
Then there exist positive constants $M_1$ and $M_2$ such that for all $X\in\mathcal{M}$ and for all $\xi\in{\rm T}_X\mathcal{M}$,
\begin{eqnarray}
\|{\bf R}_X(\xi)-X\| &\leq& M_1\|\xi\| ,\label{eq5}\\
\|{\bf R}_X(\xi)-X-\xi\| &\leq& M_2\|\xi\|^2.\label{eq6}
\end{eqnarray}
\end{proposition}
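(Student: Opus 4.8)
The plan is to exploit compactness of $\mathcal{M}$ together with smoothness of the retraction, so that the two estimates become quantitative versions of the defining properties $\mathbf{R}_X(0_X)=X$ and $\mathrm{D}\mathbf{R}_X(0_X)=\mathrm{id}$. First I would fix the ambient Euclidean norm and consider the smooth map $R:\mathcal{M}\times E\to\mathcal{M}$ from the Remark, which agrees with $\mathbf{R}_X$ on tangent vectors. The key observation is that $\mathbf{R}_X(\xi)-X$ vanishes at $\xi=0$, so by the mean value inequality along the segment $t\mapsto\mathbf{R}_X(t\xi)$ (for $\xi\in\mathrm{T}_X\mathcal{M}$, $t\in[0,1]$) we get $\|\mathbf{R}_X(\xi)-X\|\le \sup_{t\in[0,1]}\|\mathbb{D}\mathbf{R}_X(t\xi)\|\,\|\xi\|$. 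The supremum of the operator norm of the differential over the compact set $\{(X,\eta):X\in\mathcal{M},\ \|\eta\|\le 1\}$ is finite by continuity of $\mathbb{D}R$ and compactness; this bound, call it $M_1$, is uniform in $X$, which yields \eqref{eq5} for $\|\xi\|\le 1$. For $\|\xi\|>1$ one uses instead the crude bound coming from compactness of $\mathcal{M}$ itself, namely $\|\mathbf{R}_X(\xi)-X\|\le \mathrm{diam}(\mathcal{M})\le \mathrm{diam}(\mathcal{M})\|\xi\|$, and then takes the larger of the two constants.

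For \eqref{eq6} the idea is the same but one order higher: define $g(t):=\mathbf{R}_X(t\xi)-X-t\xi$ for $\xi\in\mathrm{T}_X\mathcal{M}$, so that $g(0)=0$ and $g'(0)=\mathbb{D}\mathbf{R}_X(0_X)[\xi]-\xi=0$ using property (2) of the retraction (with the canonical identification). Then Taylor's theorem with integral remainder gives $\|g(1)\|=\|\mathbf{R}_X(\xi)-X-\xi\|\le \tfrac{1}{2}\sup_{t\in[0,1]}\|g''(t)\|$, and $g''(t)=\mathbb{D}^2\mathbf{R}_X(t\xi)[\xi,\xi]$ is controlled by $\sup\|\mathbb{D}^2 R\|$ over the relevant compact set times $\|\xi\|^2$. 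Again this handles $\|\xi\|\le 1$; for $\|\xi\|>1$ one combines \eqref{eq5} (already proved) with the triangle inequality $\|\mathbf{R}_X(\xi)-X-\xi\|\le \|\mathbf{R}_X(\xi)-X\|+\|\xi\|\le (M_1+1)\|\xi\|\le (M_1+1)\|\xi\|^2$, and takes the maximum of the constants.

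The main obstacle, and the only genuinely delicate point, is making the compactness argument for the uniform bounds rigorous: one must express $\mathbb{D}\mathbf{R}_X(\xi)$ and its second differential as continuous functions on a single compact set. This is why the Remark's extension to a smooth $R:\mathcal{M}\times E\to\mathcal{M}$ is invoked — it lets us differentiate freely in the Euclidean variable $\xi$ without worrying about how the tangent spaces vary, and the set $\{(X,\eta)\in T\mathcal{M}:\|\eta\|\le 1\}$ is compact because $\mathcal{M}$ is compact and each fiber intersected with the unit ball is compact. Once that is set up, everything else is the two-term Taylor expansion described above, with the $\|\xi\|>1$ cases disposed of by the trivial diameter bound. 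I would also remark that the constants $M_1,M_2$ depend only on $\mathcal{M}$ and the chosen retraction $\mathbf{R}$, not on $X$ or $\xi$, which is exactly what later arguments require.
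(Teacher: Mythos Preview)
Your proposal is correct and follows the standard compactness-plus-Taylor approach; note however that the paper does not supply its own proof of this proposition but simply cites it from \cite{2order_boudness2016}, so there is no in-paper argument to compare against. Your sketch is precisely the kind of argument one finds in that reference: bound derivatives uniformly on the compact unit-disk bundle, then handle large $\xi$ via the trivial diameter bound.
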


For quasi-Newton methods, we need to consider moving a tangent vector along a curve from one tangent space ${\rm T}_X\mathcal{M}$ to another one ${\rm T}_Y\mathcal{M}$. Then we introduce the definition of vector transport.

\vskip 2mm

\begin{definition} (Vector Transport \cite[Definition 8.1.1]{absil2008})
A vector transport associated with a retraction ${\bf R}$ is defined as a continuous function $\mathcal{T}:{\rm T}\mathcal{M}\times {\rm T}\mathcal{M}\rightarrow {\rm T}\mathcal{M}, (\eta_X,\xi_X)\mapsto{\rm T}_{\eta_X}(\xi_X)$, which satisfies the following conditions:

(i)~$\mathcal{T}_{\eta_X}:{\rm T}_X\mathcal{M}\rightarrow {\rm T}_{{\bf R}_X(\eta_X)}\mathcal{M}$ is a linear invertible map,

(ii)~$\mathcal{T}_{0_X}(\xi_X)=\xi_X$.
\end{definition}

\vskip 2mm

Denote $Y:={\bf R}_X(\eta_X)$ where $\eta_X\in {\rm T}_{X}\mathcal{M}$.
For simplicity of notation, we define
$\mathcal{T}_{X,Y}(\xi_X):=\mathcal{T}_{\eta_X}(\xi_X)$ where $\xi_X\in {\rm T}_{X}\mathcal{M}$.
In the rest of the paper, we use $\mathcal{M}$ to denote the Stiefel manifold $\mathrm{St}(n,r)$.
For $\xi,\eta\in  {\rm T}_X\mathcal{M}$,
the inner product is defined by $\langle\xi,\eta\rangle:=tr(\xi^T\eta)$, which is inherited from the embedding Euclidean space $\mathbb{R}^{n\times r}$. The induced norm $\|\xi\|=\langle\xi,\xi\rangle^{1/2}$ is just the Frobenius norm $\|\cdot\|_F$.
According to \cite[p.42]{absil2008}, the tangent space of the Stiefel manifold at $X$ can be written as
 \[
 {\rm T}_X\mathcal{M}=\{V : V^\top X+X^\top V=0\}.
 \]
By \cite[(3.35)]{absil2008}, the orthogonal projection of $V\in\mathbb{R}^{n\times r}$ onto the tangent space ${\rm T}_X\mathcal{M}$ can be formulated as
\begin{equation}
\label{eq:proj}
{\rm Proj}_{{\rm T}_X\mathcal{M}} V = V-\frac{1}{2}X(X^\top V+V^\top X).
\end{equation}
From \cite[(3.37)]{absil2008}, we know that the Riemannian gradient of $f$ at $X$ is equal to the orthogonal projection of $\nabla f(X)$ onto ${\rm T}_X\mathcal{M}$, where $\nabla f(X)$ denotes the Euclidean gradient of $f$ at $X$,
that is
\begin{equation}
\nonumber
{\rm grad} f(X) = {\rm Proj}_{{\rm T}_X\mathcal{M}} \nabla f(X).
\end{equation}
Let $X^*$ be a local optimal solution of \eqref{eq:prob}. By \cite[Theorem 5.1]{ywh2013} (or \cite{hosseini2011}), we can obtain the first-order necessary condition of the problem \eqref{eq:prob}:
\begin{equation}\label{eq:2_opt}
0\in{\rm grad}f(X^*)+{\rm Proj}_{{\rm T}_{X^*}\mathcal{M}} (\partial h(X^*)).
\end{equation}

\section{An Adaptive Regularized Proximal Quasi-Newton Method}\label{sec:alg}

In this section, inspired by the methods proposed in \cite{reg2018} and \cite{wzw2018}, we design an adaptive regularized proximal quasi-Newton method, named ARPQN, for the composite optimization problem \eqref{eq:prob}.
The ARPQN algorithm is introduced in subsection \ref{subsec:algo1} and its convergence and iteration complexity are analyzed in subsections \ref{sec:32} and \ref{sec:33}, respectively.

\subsection{The Algorithmic Framework}
\label{subsec:algo1}

As stated in the section \ref{sec:intro}, at the $k$-th iterate $X_k$, we construct the subproblem \eqref{eq:sub_prob}, in which the approximate Hessian operator $\mathcal{B}_k$ is updated by a damped LBFGS method.
In the following, we introduce the method briefly. For more details, we refer to \cite[section 3.2]{manpqn2023}.
For ease of notation, denote $\mathcal{T}_{k,k+1}:=\mathcal{T}_{X_{k},X_{k+1}}$ and $g_k={\rm grad}f(X_k)$.
Let
\begin{eqnarray}
\label{eq:skyk}
S_{k}&:=& \mathcal{T}_{k,k+1}({\bf R}_{X_{k}}^{-1}(X_{k+1})), \ Y_{k} := g_{k+1}-\mathcal{T}_{k,k+1}(g_{k}).
\end{eqnarray}
For any $h\in{\rm T}_{X_k}\mathcal{M}$, we use $h^{\flat}$ to denote the linear function on ${\rm T}_{X_k}\mathcal{M}$ induced by $h^{\flat}\eta:=\langle h,\eta\rangle$ for all $\eta\in{\rm T}_{X_k}\mathcal{M}$.
Given an initial estimate $\mathcal{B}_{k,0}$, the LBFGS strategy for updating $\mathcal{B}_{k,i}$ can be formulated as
\begin{eqnarray}
\label{eq3_bk}
\mathcal{B}_{k,i}
&=&
\widetilde{\mathcal{B}}_{k,i-1}
- \frac{\widetilde{\mathcal{B}}_{k,i-1}S_j(\widetilde{\mathcal{B}}_{k,i-1}S_j)^{\flat}}{\mathrm{tr}(S_j^{\flat}\widetilde{\mathcal{B}}_{k,i-1}S_j)}
+ \rho_jY_jY_j^{\flat},\\
\nonumber j
&=& k-(p-i+1),\ i=1,\dots,p,
\end{eqnarray}
where
\[
\widetilde{\mathcal{B}}_{k,i-1}=\mathcal{T}_{j,j+1}\circ\mathcal{B}_{k,i-1}\circ\mathcal{T}_{j,j+1}^{-1} \ {\rm and} \ \rho_j=\frac{1}{\mathrm{tr}(Y_j^{\flat}S_j)},
\]
and $p$ is the memory size for LBFGS method.
Then we set the approximated operator $\mathcal{B}_k:=\mathcal{B}_{k,p}$.

To reduce the computational cost both for updating $\mathcal{B}_k$ and for solving the subproblem \eqref{eq:sub_prob}, we use a easily computed $\mathbf{B}_k$ to approximate $\mathcal{B}_k$ where
\begin{eqnarray}\label{hanwu}
\mathbf{B}_{k}[V] ={\rm Proj}_{{\rm T}_{X_k}\mathcal{M}}\big(({\rm diag}B_k)V\big),
\end{eqnarray}
in which $V\in {\rm T}_{X_k}\mathcal{M}$ and
$B_k\in\mathbb{R}^{n\times n}$ is a positive definite symmetric matrix so that the solution $V_k$ of \eqref{eq:sub_prob} is the descent direction of $F$ at $X_k$.
Then, it holds that
\begin{equation}
\label{eq:3_bkdiag}
{\rm tr}(V^T\mathbf{B}_k[V])={\rm tr}(V^T({\rm diag}B_k)V),\quad\forall~ V\in{\rm T}_{X_k}\mathcal{M}.
\end{equation}
Next we show how to construct $B_k$.

Since it is expensive to calculate \eqref{eq3_bk}, we use the Euclidean difference $s_k:=X_{k+1}-X_k\in\mathbb{R}^{n\times r}$ and $y_k:=g_{k+1}-g_k\in\mathbb{R}^{n\times r}$ to replace $S_k$ and $Y_k$ in \eqref{eq:skyk}. The damped technique introduced in \cite{damp1978,damp2017} is employed to guarantee the positive definiteness of $B_k$.
Specifically, define $\overline{y}_{k-1} = \beta_{k-1}{y}_{k-1} + (1-\beta_{k-1})H_{k,0}^{-1}{s}_{k-1}$, where the initial estimate $H_{k,0}$ is set to be $(1/\vartheta_k)I$ for some $\vartheta_k>0$, and
\begin{eqnarray*}
\beta_{k-1}=
\left\{
\begin{array}{ll}
\frac{0.75{\rm tr}(s_{k-1}^TH_{k,0}^{-1}s_{k-1})}{{\rm tr}(s_{k-1}^TH_{k,0}^{-1}s_{k-1}) - {\rm tr}(s_{k-1}^Ty_{k-1})},
&\textrm{if $ {\rm tr}(s_{k-1}^Ty_{k-1}) <  0.25 {\rm tr}(s_{k-1}^TH_{k,0}^{-1}s_{k-1})$};\\
1,&\textrm{otherwise}.
\end{array}
\right.
\end{eqnarray*}
Then $B_k:=B_{k,p}$ is computed by
\begin{eqnarray}\label{eq3_eu_bk}
\left\{\begin{array}{l}
B_{k,0}=\vartheta_k I_n,\\
B_{k,i}=B_{k,i-1}-\frac{B_{k,i-1}s_{j}s_{j}^TB_{k,i-1}}{{\rm tr}(s_{j}^TB_{k,i-1}s_{j})}
+\frac{\overline{y}_{j}\overline{y}_{j}^T}{{\rm tr}(s_{j}^T\overline{y}_{j})},\\
j=k-(p-i+1),i=1,\dots,p.
\end{array}
\right.
\end{eqnarray}

Denote $\|V\|_{\mathbf{B}_{k}}^2:={\rm tr}(V^T\mathbf{B}_k[V])$. Next lemma shows that $\|V\|_{\mathbf{B}_{k}}^2=O(\|V\|^2)$.

\vskip 2mm

\begin{lemma}
\label{lm_bk}
(\cite[Lemma 3.1]{manpqn2023})
Suppose that $\nabla f$ is Lipschitz continuous.
Suppose $\mathbf{B}_{k}$ and $B_k$ are defined by \eqref{hanwu} and \eqref{eq3_eu_bk} respectively.
Then for all $k$, there exist $0<\kappa_1<\kappa_2$ such that
\begin{eqnarray}\label{kan}
\kappa_1\|V\|^2
\leq\|V\|_{\mathbf{B}_{k}}^2\leq \kappa_2\|V\|^2,
\quad\forall~V\in {\rm T}_{X_k}\mathcal{M}.
\end{eqnarray}
\end{lemma}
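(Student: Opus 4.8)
The plan is to establish the two-sided bound by controlling the eigenvalues of the symmetric matrix $B_k$ (equivalently, of $\mathrm{diag}\,B_k$) uniformly in $k$, and then transferring these bounds to the quadratic form $\|V\|_{\mathbf{B}_k}^2$ via the identity \eqref{eq:3_bkdiag}. First I would recall that by \eqref{eq:3_bkdiag}, for every $V\in{\rm T}_{X_k}\mathcal{M}$ we have $\|V\|_{\mathbf{B}_k}^2={\rm tr}(V^T(\mathrm{diag}\,B_k)V)$, so it suffices to show that there are constants $0<\kappa_1<\kappa_2$ with $\kappa_1 I_n \preceq \mathrm{diag}\,B_k \preceq \kappa_2 I_n$ for all $k$; since the diagonal entries of $B_k$ lie between its smallest and largest eigenvalues, it is in fact enough to bound the spectrum of $B_k$ itself, i.e. to find $0<\kappa_1<\kappa_2$ such that $\kappa_1 I_n \preceq B_k \preceq \kappa_2 I_n$ for all $k$.

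Next I would analyze the damped LBFGS recursion \eqref{eq3_eu_bk}. The key structural facts are: (a) the damping construction of $\overline{y}_{k-1}$ guarantees the curvature condition ${\rm tr}(s_{k-1}^T\overline{y}_{k-1})\ge 0.25\,{\rm tr}(s_{k-1}^TH_{k,0}^{-1}s_{k-1})>0$, so each BFGS update is well-defined and preserves positive definiteness; (b) since $\nabla f$ is Lipschitz continuous on the compact manifold $\mathcal{M}$, the vectors $y_{k-1}=g_{k+1}-g_k$ and hence $\overline{y}_{k-1}$ satisfy $\|\overline{y}_{k-1}\|\le L'\|s_{k-1}\|$ for a uniform constant, and $\|s_{k-1}\|$ is itself bounded because $\mathcal{M}$ is compact; (c) the initial scaling $\vartheta_k$ is chosen (as in \cite{manpqn2023}) to lie in a fixed interval $[\underline\vartheta,\overline\vartheta]$ with $0<\underline\vartheta\le\overline\vartheta<\infty$. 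With these, the standard LBFGS eigenvalue argument applies: the trace of $B_{k,i}$ is bounded above by induction (each rank-one $\overline{y}_j\overline{y}_j^T/{\rm tr}(s_j^T\overline{y}_j)$ term adds at most $L'^2/0.25\cdot\|H_{k,0}\|$, and the subtracted term is positive semidefinite), which gives the upper bound $\lambda_{\max}(B_k)\le\kappa_2$; and the determinant of $B_{k,i}$ is bounded below by induction (the Sherman–Morrison–type formula shows $\det B_{k,i}=\det B_{k,i-1}\cdot\frac{{\rm tr}(s_j^T\overline{y}_j)}{{\rm tr}(s_j^TB_{k,i-1}s_j)}$, and the denominator is controlled by the already-established upper bound on the eigenvalues), which combined with the upper eigenvalue bound forces $\lambda_{\min}(B_k)\ge\kappa_1$.

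Having bounded the spectrum of $B_k$ uniformly, I would conclude: for any $V\in{\rm T}_{X_k}\mathcal{M}$,
\begin{equation*}
\kappa_1\|V\|^2 = \kappa_1\,{\rm tr}(V^TV) \le {\rm tr}(V^T(\mathrm{diag}\,B_k)V) = \|V\|_{\mathbf{B}_k}^2 \le \kappa_2\,{\rm tr}(V^TV) = \kappa_2\|V\|^2,
\end{equation*}
where the middle inequalities use $\kappa_1 I_n\preceq\mathrm{diag}\,B_k\preceq\kappa_2 I_n$ (a consequence of Gershgorin / positivity of diagonal entries applied to the spectrally bounded $B_k$). This is exactly \eqref{kan}.

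The main obstacle is the uniform-in-$k$ lower bound $\lambda_{\min}(B_k)\ge\kappa_1$: unlike the upper (trace) bound, which is a soft monotonicity argument, the lower bound requires the determinant-ratio recursion together with the fact that after $p$ damped BFGS updates the eigenvalues have not collapsed, and it relies crucially on both the uniform curvature lower bound from the damping and the uniform scaling of $\vartheta_k$. Since this is precisely the content of \cite[Lemma 3.1]{manpqn2023}, in the paper one may simply cite that result; the proof sketch above indicates how it goes. One should also note that $\mathbf{B}_k$ maps into ${\rm T}_{X_k}\mathcal{M}$ by construction \eqref{hanwu}, so the quadratic form $\|\cdot\|_{\mathbf{B}_k}$ is genuinely defined on the tangent space and the identity \eqref{eq:3_bkdiag} is the correct bridge to the Euclidean matrix $\mathrm{diag}\,B_k$.
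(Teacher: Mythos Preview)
The paper does not prove this lemma at all: it is stated with the citation \cite[Lemma 3.1]{manpqn2023} and no argument is given. Your proposal therefore cannot be compared against a proof in the paper, because there is none; you have (correctly) supplied a sketch of the argument that the cited reference contains.

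Your sketch is essentially the standard damped--LBFGS eigenvalue argument and is sound in outline: reduce via \eqref{eq:3_bkdiag} to bounding the diagonal of $B_k$, observe that the diagonal entries of a symmetric positive definite matrix lie between its extreme eigenvalues, and then bound $\lambda_{\max}(B_k)$ by a trace recursion and $\lambda_{\min}(B_k)$ by the determinant recursion $\det B_{k,i}=\det B_{k,i-1}\cdot{\rm tr}(s_j^T\overline y_j)/{\rm tr}(s_j^TB_{k,i-1}s_j)$, with the damping furnishing the uniform curvature lower bound ${\rm tr}(s_j^T\overline y_j)\ge 0.25\,{\rm tr}(s_j^TH_{k,0}^{-1}s_j)$. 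Two minor remarks: the phrase ``Gershgorin / positivity of diagonal entries'' is not quite the right justification---what you actually use is the Rayleigh-quotient fact $\lambda_{\min}(B_k)\le (B_k)_{ii}\le\lambda_{\max}(B_k)$ for symmetric $B_k$; and the assumption that $\vartheta_k$ stays in a fixed positive interval is something you should state explicitly as a hypothesis (it is enforced in the implementation but is not part of the lemma statement here).
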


We now give a brief description of our algorithm.
Denote $V_k$ as the exact solution of the subproblem \eqref{eq:sub_prob}. After $V_k$ is obtained,
we apply a retraction-based nonmonotone backtracking line-search technique, which is introduced in \cite{gll1986}, to determine the stepsize $\alpha_k$. The stepsize is set to be $\alpha_k=\gamma^{N_k}$ where $N_k$ is the smallest integer such that
\begin{equation}\label{eq:line_search}
F({\bf R}_{X_k}(\alpha_k V_k))
\leq
\max_{\max\{0,k-m\}\leq j\leq k}F(X_j)
-\frac{1}{2}\sigma\alpha_k\|V_k\|_{\mathcal{B}_k}^2,
\end{equation}
in which $\sigma,\gamma\in(0,1)$ are parameters for line-search strategy.
To simplify the notation, we denote
\begin{equation}
\label{eq:3_nls_index}
l(k):=\mathop{\arg\max}\limits_{\max\{0,k-m\}\leq j\leq k}F(X_j),
\end{equation}
and then we have $F(X_{l(k)})=\max_{\max\{0,k-m\}\leq j\leq k}F(X_j)$.

Then, we define the following ratio
\begin{equation}
\label{eq:3_rhok}
\rho_k:=\frac{F({\bf R}_{X_k}(\alpha_k V_k))-F(X_{l(k)})}{\phi_k(\alpha_kV_k)-\phi_k(0)},
\end{equation}
which will be calculated at each iteration.
The ratio $\rho_k$ describes how well the model \eqref{eq:sub_prob} approximates the problem \eqref{eq:prob} at the current iterate $X_k$.
If $V_k$ is nonzero, $\rho_k$ is always positive since \eqref{eq:line_search} guarantees a strict reduction of $F$ and $\phi_k(\alpha_kV_k)<\phi_k(0)$,
which will be proved in subsection \ref{sec:32}.
When $\rho_k$ is positive but close to $0$, 
there is not good agreement between \eqref{eq:prob} and \eqref{eq:sub_prob} over this step. 
Then, we will enlarge the regularization parameter $\sigma_k$ and compute $V_k$, $\alpha_k$ and $\rho_k$ repeatedly until $\rho_k$ is sufficiently large.
Otherwise,
we can obtain the new iterate $X_{k+1}={\bf R}_{X_k}(\alpha_k V_k)$ and when $\rho_k$ is too large, we will shrink $\sigma_k$.
We describe our method in Algorithm \ref{alg:ada_manpqn}.

\begin{algorithm}
\caption{An adaptive regularized proximal quasi-Newton algorithm for Riemannian composite optimization (ARPQN)}
\label{alg:ada_manpqn}
\begin{algorithmic}[1]
\Require Initial point $X_0\in\mathcal{M}$, initial regularization parameter $\sigma_0>0$, line-search parameters $\sigma,\gamma\in\left(0,1\right)$, $0<\eta_1<\eta_2<1$ and $0<\gamma_1<1<\gamma_2$.
\For{$k = 0,1,\dots$}
	\If{$k\geq1$}
	\State Update $\mathcal{B}_k$ by the quasi-Newton method;
	\Else
	\State Set $\mathcal{B}_k=I$;
	\EndIf
	\While{$1$}
        		\State Solve the subproblem \eqref{eq:sub_prob} to obtain the search direction $V_k$;
        		\State Set the initial stepsize $\alpha_k=1$;
                \While{\eqref{eq:line_search} is not satisfied}
                		\State $\alpha_k\gets\gamma\alpha_k$;
                \EndWhile
                \State Set $Z_{k}={\bf R}_{X_k}(\alpha_k V_k)$;
                \State Calculate the ratio $\rho_k$ by \eqref{eq:3_rhok};
                \If{$\rho_k\geq\eta_1$}
                		\If{$\rho_k\geq\eta_2$}
				\State Update $\sigma_{k}\gets\gamma_1\sigma_k$;
			\EndIf
                		\State break;
		\Else
		\State Update $\sigma_{k}\gets\gamma_2\sigma_k$;
		\EndIf
        \EndWhile
        \State Set $X_{k+1}=Z_k$ and $\sigma_{k+1}=\sigma_k$;
\EndFor
\end{algorithmic}
\end{algorithm}

\subsection{Global Convergence and Convergence Rate Analysis}
\label{sec:32}

In this subsection, we prove the global convergence of Algorithm \ref{alg:ada_manpqn}.
Moreover, a local linear convergence rate of Algorithm \ref{alg:ada_manpqn} is established under some conditions.
First, we state some standard assumptions on the problem which will be used in the rest of this paper.

\vskip 2mm

\begin{assumption}
\label{ass:1}
Let $\{X_k\}_k$ be the sequence generated by Algorithm \ref{alg:ada_manpqn}.

\begin{enumerate}
\item[\rm(A.1)]
$f:\mathbb{R}^{n\times r}\to\mathbb{R}$ is a continuously differentiable function, and $\nabla f$ is Lipschitz continuous with Lipschitz constant $L$.
\item[\rm(A.2)]
$h:\mathbb{R}^{n\times r}\to\mathbb{R}$ is a convex but nonsmooth function, and $h$ is Lipschitz continuous with Lipschitz constant $L_h$.
\item[\rm(A.3)]
There exist $0<\kappa_1<\kappa_2$ such that for all $k\geq0$,
\begin{equation}\label{eq:bk_norm}
\kappa_1\|V\|^2
\leq\langle \mathcal{B}_k[V],V\rangle
\leq \kappa_2\|V\|^2,
\quad\forall~V\in {\rm T}_{X_k}\mathcal{M}.
\end{equation}
\item[\rm(A.4)]
The optimal solution $V_k$ of \eqref{eq:sub_prob} satisfies $\|V_k\|\neq0$ for any $k\geq0$.
\end{enumerate}
\end{assumption}

\vskip 2mm

\begin{remark}
In Assumption \ref{ass:1}, \rm{(A.1)} and \rm{(A.2)} are standard assumptions for convergence analysis of composite optimization.
In our implementation, we use $\mathbf{B}_k$ to replace $\mathcal{B}_k$ where $\mathbf{B}_k$ is defined in \eqref{hanwu}. By Lemma \ref{lm_bk}, we know that $\mathbf{B}_k$ satisfies \rm{(A.3)}.
For \rm{(A.4)}, if $V_k=0$ then $X_k$ satisfies the first-order necessary condition \eqref{eq:2_opt}.
\end{remark}

By Assumption \ref{ass:1}, $\phi_k$ in \eqref{eq:sub_prob} is strongly convex,
and therefore \eqref{eq:sub_prob} has a unique solution,
which is denoted by $V_k$. Then, by \eqref{eq:2_opt}, we can deduce that
\begin{equation}
\label{eq:4_sub_opt}
0\in{\rm Proj}_{{\rm T}_{X_k}\mathcal{M}}\partial\phi_k(V_k)={\rm grad} f(X_k) +(\mathcal{B}_k+\sigma_kI)[V_k]+{\rm Proj}_{{\rm T}_{X_k}\mathcal{M}}\partial h(X_k+V_k).
\end{equation}
Thus, $V_k=0$ is equivalent to that $X_k$ satisfies \eqref{eq:2_opt} and therefore $X_k$ is a stationary point of \eqref{eq:prob};
If $V_k\neq0$, similar to the proof of \cite[Lemma 5.1]{mashiqian2020} (or \cite[Lemma 4.1]{manpqn2023}),
we can show that $V_k$ provides sufficient decrease in $\phi_k$. For completeness, we give a proof here.

\vskip 2mm

\begin{lemma}
\label{lm:41}
Suppose Assumption \ref{ass:1} holds. For any $\alpha\in \left[0,1\right]$, it holds that
\begin{equation}\label{eq:phi_alpha}
\phi_k(\alpha V_k)-\phi_k(0)
\leq
\frac{\alpha(\alpha-2)}{2}\langle(\mathcal{B}_k+\sigma_kI)[V_k],V_k\rangle.
\end{equation}
\end{lemma}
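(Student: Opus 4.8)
The plan is to exploit the strong convexity of $\phi_k$, which follows from Assumption~\ref{ass:1} (in particular (A.2) convexity of $h$ and (A.3) positive definiteness of $\mathcal{B}_k+\sigma_kI$). Write $\phi_k(V)=\psi_k(V)+h(X_k+V)$ where $\psi_k(V):=\langle\nabla f(X_k),V\rangle+\tfrac12\langle(\mathcal{B}_k+\sigma_kI)[V],V\rangle$ is a quadratic with Hessian $\mathcal{B}_k+\sigma_kI$. For $\alpha\in[0,1]$ use convexity of $h$ to bound $h(X_k+\alpha V_k)=h(\alpha(X_k+V_k)+(1-\alpha)X_k)\le\alpha h(X_k+V_k)+(1-\alpha)h(X_k)$, and expand the quadratic part exactly:
\begin{equation}
\nonumber
\psi_k(\alpha V_k)=\alpha\langle\nabla f(X_k),V_k\rangle+\frac{\alpha^2}{2}\langle(\mathcal{B}_k+\sigma_kI)[V_k],V_k\rangle.
\end{equation}
Combining these gives $\phi_k(\alpha V_k)\le\alpha\phi_k(V_k)+(1-\alpha)\phi_k(0)+\tfrac{\alpha^2-\alpha}{2}\langle(\mathcal{B}_k+\sigma_kI)[V_k],V_k\rangle$, so that $\phi_k(\alpha V_k)-\phi_k(0)\le\alpha(\phi_k(V_k)-\phi_k(0))+\tfrac{\alpha(\alpha-1)}{2}\langle(\mathcal{B}_k+\sigma_kI)[V_k],V_k\rangle$.

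The remaining step is to control $\phi_k(V_k)-\phi_k(0)$. Here I would use the optimality of $V_k$: from the first-order condition \eqref{eq:4_sub_opt} there is a subgradient $G\in\partial h(X_k+V_k)$ with $\mathrm{Proj}_{{\rm T}_{X_k}\mathcal{M}}\big(\nabla f(X_k)+(\mathcal{B}_k+\sigma_kI)[V_k]+G\big)=0$; pairing this with $V_k\in{\rm T}_{X_k}\mathcal{M}$ yields $\langle\nabla f(X_k),V_k\rangle+\langle(\mathcal{B}_k+\sigma_kI)[V_k],V_k\rangle+\langle G,V_k\rangle=0$. Convexity of $h$ gives $h(X_k)\ge h(X_k+V_k)+\langle G,-V_k\rangle$, i.e. $h(X_k+V_k)-h(X_k)\le\langle G,V_k\rangle$. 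Therefore
\begin{equation}
\nonumber
\phi_k(V_k)-\phi_k(0)=\langle\nabla f(X_k),V_k\rangle+\tfrac12\langle(\mathcal{B}_k+\sigma_kI)[V_k],V_k\rangle+h(X_k+V_k)-h(X_k)\le-\tfrac12\langle(\mathcal{B}_k+\sigma_kI)[V_k],V_k\rangle.
\end{equation}
Substituting this bound into the inequality from the previous paragraph gives $\phi_k(\alpha V_k)-\phi_k(0)\le-\tfrac{\alpha}{2}\langle(\mathcal{B}_k+\sigma_kI)[V_k],V_k\rangle+\tfrac{\alpha(\alpha-1)}{2}\langle(\mathcal{B}_k+\sigma_kI)[V_k],V_k\rangle=\tfrac{\alpha(\alpha-2)}{2}\langle(\mathcal{B}_k+\sigma_kI)[V_k],V_k\rangle$, which is exactly \eqref{eq:phi_alpha}.

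The only mildly delicate point is the subgradient bookkeeping: one must be careful that the projection in \eqref{eq:4_sub_opt} does not cause trouble when pairing against $V_k$, but since $V_k$ already lies in the tangent space, $\langle\mathrm{Proj}_{{\rm T}_{X_k}\mathcal{M}}(w),V_k\rangle=\langle w,V_k\rangle$ for any $w$, so this is automatic. Everything else is an exact expansion of a quadratic plus one application of Jensen/convexity of $h$; there is no genuine obstacle, and the argument mirrors \cite[Lemma 5.1]{mashiqian2020} and \cite[Lemma 4.1]{manpqn2023} with $\mathcal{B}_k$ replaced by $\mathcal{B}_k+\sigma_kI$.
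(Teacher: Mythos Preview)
Your proof is correct and follows essentially the same approach as the paper: both arguments use convexity of $h$ to linearize $h(X_k+\alpha V_k)$, then invoke the first-order optimality condition \eqref{eq:4_sub_opt} together with the subgradient inequality for $h$ to bound $\phi_k(V_k)-\phi_k(0)\le-\tfrac12\langle(\mathcal{B}_k+\sigma_kI)[V_k],V_k\rangle$, and combine. Your explicit remark that the projection is harmless when paired against $V_k\in{\rm T}_{X_k}\mathcal{M}$ is exactly the point the paper uses implicitly.
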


\begin{proof}
By \eqref{eq:4_sub_opt}, there exists $\xi\in\partial h(X_k+V_k)$ such that
\[
{\rm grad}f(X_k) +(\mathcal{B}_k+\sigma_kI)[V_k]+{\rm Proj}_{{\rm T}_{X_k}\mathcal{M}}\xi=0.
\]
From $\xi+\nabla f(X_k)\in\partial\left(\phi_k-\frac{1}{2}\langle(\mathcal{B}_k+\sigma_kI)[\cdot],\cdot\rangle\right)(V_k)$,
it follows that
\begin{eqnarray}
\nonumber
&&
\phi_k(0) -\phi_k(V_k)
\geq  \langle\nabla f(X_k)+\xi,-V_k\rangle-\frac{1}{2}\langle(B_k+\sigma_kI)[V_k],V_k\rangle
\\
&=& \langle{\rm grad}f(X_k)+(\mathcal{B}_k+\sigma_kI)[V_k]
+{\rm Proj}_{{\rm T}_{X_k}\mathcal{M}}\xi,-V_k\rangle
+\frac{1}{2}\langle(\mathcal{B}_k+\sigma_kI)[V_k],V_k\rangle \nonumber\\
&=& \frac{1}{2}\langle(\mathcal{B}_k+\sigma_kI)[V_k],V_k\rangle.\label{eq33_1}
\end{eqnarray}
Since $h$ is a convex function, for all $0\leq\alpha\leq1$, we have
\begin{equation}\label{eq33_3}
h(X_k+\alpha V_k)-h(X_k)
\leq 
\alpha\left(h(X_k+V_k)-h(X_k)\right).
\end{equation}
Combining \eqref{eq33_1} and \eqref{eq33_3} yields
\begin{eqnarray}
\nonumber
\phi_k(\alpha V_k) -\phi_k(0)
&=&
\langle\nabla f(X_k),\alpha V_k\rangle
+ \frac{1}{2}\langle(\mathcal{B}_k+\sigma_kI)[\alpha V_k],\alpha V_k\rangle + h(X_k+\alpha V_k)-h(X_k)
\\
\nonumber
&\leq&
\alpha\left(\langle\nabla f(X_k),V_k\rangle
+\frac{\alpha}{2}\langle(\mathcal{B}_k+\sigma_kI)[V_k],V_k\rangle
+h(X_k+V_k)-h(X_k)
\right)
\\
\nonumber
&=&
\alpha\left(\phi_k(V_k)-\phi_k(0)+\frac{\alpha-1}{2}\langle(\mathcal{B}_k+\sigma_kI)[V_k],V_k\rangle\right)\\
\nonumber
&\leq&  \frac{\alpha(\alpha-2)}{2}\langle(\mathcal{B}_k+\sigma_kI)[V_k],V_k\rangle.
\end{eqnarray}
The assertion holds.
\end{proof}

An important part of convergence analysis of regularized Newton-type methods is to prove boundedness of $\sigma_k$.
We begin with a preparatory lemma.
By the procedure of Algorithm \ref{alg:ada_manpqn},
if $\rho_k<\eta_1$, then $\sigma_k$ will increase by $\gamma_2$ times (see step 21).
In the following result, we prove that when $\sigma_k$ is sufficiently large,
it holds that $\rho_k\geq\eta_1$. Thus, the inner loop (steps 7--23) of Algorithm \ref{alg:ada_manpqn} will terminate in finite steps.
Since $\mathcal{M}$ is compact, we can define
$$
\varrho:=\sup_{X\in\mathcal{M}}\|\nabla f(X)\|.
$$
Recall that $M_1$ and $M_2$ are parameters defined by \eqref{eq5} and \eqref{eq6}. In the rest of the paper,
we use the notation
\begin{eqnarray}
c_1:=\varrho M_2+\frac{1}{2}LM_1^2,\quad~~~c_2:=c_1+L_hM_2.\label{c2}
\end{eqnarray}

\begin{lemma}
\label{lm:42}
Suppose Assumption \ref{ass:1} holds.
If
\begin{equation}\label{eq:over_sigma}
\sigma_k\geq\overline{\sigma}:=
-\kappa_1+\frac{2c_2}{2-\eta_1},
\end{equation}
where $\kappa_1$ is introduced in \eqref{eq:bk_norm},
then $\rho_k\geq\eta_1$.
\end{lemma}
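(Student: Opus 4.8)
The plan is to show that when $\sigma_k$ is at least $\overline{\sigma}$, the actual decrease in $F$ along the step is sufficiently well-approximated by the predicted decrease in $\phi_k$, which forces $\rho_k \geq \eta_1$. The key is to bound the numerator and denominator of $\rho_k$ in \eqref{eq:3_rhok} separately. First I would analyze the denominator: by Lemma \ref{lm:41} with $\alpha = \alpha_k$, we have $\phi_k(\alpha_k V_k) - \phi_k(0) \leq \frac{\alpha_k(\alpha_k-2)}{2}\langle(\mathcal{B}_k+\sigma_kI)[V_k],V_k\rangle$, and since $\alpha_k \in (0,1]$ and $\alpha_k(\alpha_k-2) \leq -\alpha_k$, this gives $\phi_k(0) - \phi_k(\alpha_k V_k) \geq \frac{\alpha_k}{2}\langle(\mathcal{B}_k+\sigma_kI)[V_k],V_k\rangle \geq \frac{\alpha_k}{2}(\kappa_1+\sigma_k)\|V_k\|^2$ using (A.3). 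In particular the denominator is negative, consistent with the remark preceding the lemma.

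Next I would bound the numerator $F(\mathbf{R}_{X_k}(\alpha_k V_k)) - F(X_{l(k)})$ from above. Since $F(X_{l(k)}) \geq F(X_k)$, it suffices to bound $F(\mathbf{R}_{X_k}(\alpha_k V_k)) - F(X_k)$. Write $F = f + h$ and set $Z_k = \mathbf{R}_{X_k}(\alpha_k V_k)$. For the smooth part, using the $L$-Lipschitz continuity of $\nabla f$ together with Proposition \ref{prop2_1} (the estimates \eqref{eq5} and \eqref{eq6}), a standard descent-lemma computation yields $f(Z_k) - f(X_k) \leq \langle \nabla f(X_k), \alpha_k V_k\rangle + (\varrho M_2 + \frac{1}{2}LM_1^2)\alpha_k^2\|V_k\|^2 = \langle \nabla f(X_k), \alpha_k V_k\rangle + c_1\alpha_k^2\|V_k\|^2$. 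For the nonsmooth part, $h(Z_k) - h(X_k) \leq h(X_k + \alpha_k V_k) - h(X_k) + L_h\|Z_k - X_k - \alpha_k V_k\| \leq h(X_k+\alpha_k V_k) - h(X_k) + L_h M_2 \alpha_k^2\|V_k\|^2$ by (A.2) and \eqref{eq6}. Adding these and recognizing $\langle\nabla f(X_k),\alpha_k V_k\rangle + h(X_k+\alpha_k V_k) - h(X_k) = \phi_k(\alpha_k V_k) - \phi_k(0) - \frac{1}{2}\langle(\mathcal{B}_k+\sigma_kI)[\alpha_k V_k],\alpha_k V_k\rangle$, we obtain
\begin{equation}
\nonumber
F(Z_k) - F(X_k) \leq \phi_k(\alpha_k V_k) - \phi_k(0) - \frac{\alpha_k^2}{2}\langle(\mathcal{B}_k+\sigma_kI)[V_k],V_k\rangle + c_2\alpha_k^2\|V_k\|^2.
\end{equation}

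Combining the two bounds, $\rho_k = \frac{F(Z_k)-F(X_{l(k)})}{\phi_k(\alpha_kV_k)-\phi_k(0)} \geq 1 + \frac{-\frac{\alpha_k^2}{2}\langle(\mathcal{B}_k+\sigma_kI)[V_k],V_k\rangle + c_2\alpha_k^2\|V_k\|^2}{\phi_k(\alpha_kV_k)-\phi_k(0)}$, and since the denominator is negative with $\phi_k(0) - \phi_k(\alpha_k V_k) \geq \frac{\alpha_k}{2}(\kappa_1+\sigma_k)\|V_k\|^2$, I would estimate $1 - \rho_k \leq \frac{\frac{\alpha_k^2}{2}\langle(\mathcal{B}_k+\sigma_kI)[V_k],V_k\rangle - c_2\alpha_k^2\|V_k\|^2}{\frac{\alpha_k}{2}(\kappa_1+\sigma_k)\|V_k\|^2}$. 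Bounding the numerator using $\langle(\mathcal{B}_k+\sigma_kI)[V_k],V_k\rangle \le (\kappa_2 + \sigma_k)\|V_k\|^2$ would not directly help, so instead I would keep $\langle(\mathcal{B}_k+\sigma_kI)[V_k],V_k\rangle$ intact in the numerator of $1-\rho_k$ and use it equals the factor appearing in the denominator up to the $\alpha_k$ versus $\alpha_k^2$ discrepancy — more precisely, since $\phi_k(0)-\phi_k(\alpha_kV_k) \geq \frac{\alpha_k}{2}\langle(\mathcal B_k+\sigma_k I)[V_k],V_k\rangle$, I get $1 - \rho_k \leq \alpha_k - \frac{2c_2\alpha_k^2\|V_k\|^2}{\alpha_k\langle(\mathcal B_k+\sigma_k I)[V_k],V_k\rangle} \leq 1 - \frac{2c_2}{\kappa_1+\sigma_k}$, hence $\rho_k \geq \frac{2c_2}{\kappa_1+\sigma_k}$; wait — I need $\alpha_k \le 1$ handled carefully, so the cleaner route is: from $1-\rho_k \le \alpha_k\left(1 - \frac{2c_2}{\kappa_1+\sigma_k}\right)$ and $\alpha_k \le 1$ it follows that $\rho_k \ge \frac{2c_2}{\kappa_1+\sigma_k} \ge \eta_1$ precisely when $\sigma_k \ge -\kappa_1 + \frac{2c_2}{\eta_1}$. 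I note the stated threshold $\overline\sigma = -\kappa_1 + \frac{2c_2}{2-\eta_1}$ is sharper, which suggests the intended argument does not crudely drop the $\alpha_k$-versus-$\alpha_k^2$ gap but rather exploits it, using that the backtracking always succeeds at $\alpha_k = 1$ once $\sigma_k$ is large (so effectively $\alpha_k = 1$). The main obstacle I anticipate is precisely tracking this stepsize bookkeeping — showing $\alpha_k = 1$ is eventually accepted and plugging that in to recover the factor $2 - \eta_1$ rather than $\eta_1$ in the denominator — together with correctly assembling the descent estimates with the right constants $c_1, c_2$.
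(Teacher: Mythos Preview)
Your descent estimates for $f$ and $h$ are exactly right and match the paper's. The gap is in how you bound the denominator: you use only the crude consequence $\phi_k(0)-\phi_k(\alpha_kV_k)\ge\frac{\alpha_k}{2}\langle(\mathcal B_k+\sigma_kI)[V_k],V_k\rangle$, whereas Lemma~\ref{lm:41} actually gives the stronger bound
\[
\phi_k(0)-\phi_k(\alpha_kV_k)\ \ge\ \frac{\alpha_k(2-\alpha_k)}{2}\,\langle(\mathcal B_k+\sigma_kI)[V_k],V_k\rangle.
\]
Keeping the factor $(2-\alpha_k)$ is the whole point. Combining it with your (correct) numerator bound $F(Z_k)-F(X_{l(k)})-(\phi_k(\alpha_kV_k)-\phi_k(0))\le\bigl(c_2-\tfrac12(\kappa_1+\sigma_k)\bigr)\alpha_k^2\|V_k\|^2$ and the lower eigenvalue bound yields
\[
1-\rho_k\ \le\ \frac{\bigl(2c_2-(\kappa_1+\sigma_k)\bigr)\alpha_k}{(2-\alpha_k)(\kappa_1+\sigma_k)}\ \le\ \frac{2c_2-(\kappa_1+\sigma_k)}{\kappa_1+\sigma_k},
\]
the last step using $\alpha_k/(2-\alpha_k)\le1$ for $\alpha_k\in(0,1]$. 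Rearranging gives $\rho_k\ge\eta_1$ exactly when $\kappa_1+\sigma_k\ge\frac{2c_2}{2-\eta_1}$, i.e.\ $\sigma_k\ge\overline\sigma$.

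So your final speculation is off: the paper does \emph{not} need to show $\alpha_k=1$ first (and that would make the argument awkward, since the lemma is used precisely to control the inner loop before any stepsize is fixed). The missing $(2-\alpha_k)$ is what converts the $\alpha_k^2$ in the numerator against the $\alpha_k$ in the denominator into the clean ratio $\alpha_k/(2-\alpha_k)\le1$, and this is where the $2-\eta_1$ in the threshold comes from. Also watch the sign: in your displayed bound for $1-\rho_k$ the numerator should be $c_2\alpha_k^2\|V_k\|^2-\tfrac{\alpha_k^2}{2}\langle(\mathcal B_k+\sigma_kI)[V_k],V_k\rangle$, not the reverse; the subsequent manipulation inherits this error.
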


\begin{proof}
Since $\nabla f$ is Lipschitz continuous with constant $L$, for any $\alpha>0$, we have
\begin{eqnarray}
\nonumber
&&
f({\bf R}_{X_k}(\alpha V_k))
\\
\nonumber
&\leq&
f(X_k)+\langle\nabla f(X_k),{\bf R}_{X_k}(\alpha V_k)-X_k\rangle+\frac{L}{2}\|{\bf R}_{X_k}(\alpha V_k)-X_k\|^2
\\
\nonumber
&\leq&
f(X_k)+\langle\nabla f(X_k),{\bf R}_{X_k}(\alpha V_k)-X_k-\alpha V_k\rangle+\langle\nabla f(X_k),\alpha V_k\rangle+\frac{1}{2}LM_1^2\|\alpha V_k\|^2,
\\
\nonumber
&\leq&
f(X_k)+\langle\nabla f(X_k),\alpha V_k\rangle+(\varrho M_2+\frac{1}{2}LM_1^2)\|\alpha V_k\|^2
\\
\label{eq34_1}
&=&
f(X_k)+\langle\nabla f(X_k),\alpha V_k\rangle+c_1\|\alpha V_k\|^2,
\end{eqnarray}
where the second and the last inequality use \eqref{eq5} and \eqref{eq6}.
Since $h$ is Lipschitz continuous with constant $L_h$, taking into account \eqref{eq6}, we can obtain
\begin{equation}
 \label{eq34_1_h}
 h({\bf R}_{X_k}(\alpha V_k))-h(X_k+\alpha V_k)
\leq
L_h\|{\bf R}_{X_k}(\alpha V_k)-X_k-\alpha V_k\|
\\
\leq
L_hM_2\|\alpha V_k\|^2.
\end{equation}
From \eqref{eq34_1} and \eqref{eq34_1_h}, it follows that for any $\alpha>0$,
\begin{eqnarray}
\nonumber
&&
F({\bf R}_{X_k}(\alpha V_k))
\\
\nonumber
&\leq&
f(X_k)+\langle\nabla f(X_k),\alpha V_k\rangle+(c_1+L_hM_2)\|\alpha V_k\|^2 +h(X_k+\alpha V_k)
\\
\label{eq:4_10}
&= &
f(X_k)+\phi_k(\alpha V_k) +c_2\|\alpha V_k\|^2 - \frac{1}{2}\langle(\mathcal{B}_k+\sigma_kI)[\alpha V_k],\alpha V_k\rangle
\\
\label{eq:decrease_f}
&\leq&
F(X_{l(k)})+\phi_k(\alpha V_k)-\phi_k(0)
+(c_2-\frac{1}{2}(\kappa_1+\sigma_k))\|\alpha V_k\|^2.
\end{eqnarray}
Assume that \eqref{eq:over_sigma} holds.
By \eqref{eq:phi_alpha}, \eqref{eq:decrease_f} and $\alpha_k\in\left(0,1\right]$,
we can deduce that
\begin{eqnarray}
\nonumber
1-\rho_k
&=&
\frac{F({\bf R}_{X_k}(\alpha_k V_k))-F(X_{l(k)})-\phi_k(\alpha_kV_k)+\phi_k(0)}{-\phi_k(\alpha_kV_k)+\phi_k(0)}
\\
\nonumber
&\leq&
\frac{(c_2-\frac{1}{2}(\kappa_1+\sigma_k))\|\alpha_k V_k\|^2}{\frac{\alpha_k(2-\alpha_k)}{2}\langle(\mathcal{B}_k+\sigma_kI)[V_k],V_k\rangle}
\leq
\frac{(2c_2-\kappa_1-\sigma_k)\alpha_k}{(2-\alpha_k)(\kappa_1+\sigma_k)}
\leq
\frac{2c_2-\kappa_1-\sigma_k}{\kappa_1+\sigma_k}
\\
\nonumber
&\leq&
1-\eta_1,
\end{eqnarray}
where the second inequality uses \eqref{eq:bk_norm} and the last inequality is due to \eqref{eq:over_sigma}.
Thus, the assertion holds.
\end{proof}

\vskip 2mm

With the help of Lemma \ref{lm:42}, we can prove that the sequence $\{\sigma_k\}_k$ is bounded.

\vskip 2mm

\begin{lemma}
\label{lm:43}
Suppose that Assumption \ref{ass:1} holds. Then,
\begin{eqnarray}\label{si}
\sigma_k\leq\max\{\sigma_0,\gamma_2\overline{\sigma}\}, \quad\forall~k\geq0,
\end{eqnarray}
where $\overline{\sigma}$ is defined by \eqref{eq:over_sigma}.
\end{lemma}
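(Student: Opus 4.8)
The plan is to follow the standard argument for bounding the regularization parameter in adaptive/regularized Newton-type schemes, using Lemma~\ref{lm:42} as the key ingredient. The intuition is: whenever $\sigma_k$ exceeds the threshold $\overline{\sigma}$ defined in \eqref{eq:over_sigma}, Lemma~\ref{lm:42} guarantees $\rho_k\geq\eta_1$, so the inner while-loop breaks and $\sigma_k$ is never enlarged by the factor $\gamma_2$ beyond that point. Hence $\sigma_k$ can only be multiplied by $\gamma_2>1$ while its current value is still below $\overline{\sigma}$, which gives a ceiling of roughly $\gamma_2\overline{\sigma}$; and it is never increased at all across outer iterations since $\sigma_{k+1}=\sigma_k$ at the end of each iteration (step~24). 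The only way the value can grow is through the $\sigma_k\gets\gamma_2\sigma_k$ update inside the inner loop.

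First I would argue by induction on the pair consisting of the outer index $k$ and the number of inner-loop passes. I would show the invariant: at every point in the execution of Algorithm~\ref{alg:ada_manpqn} where $\sigma_k$ takes a value, that value is at most $\max\{\sigma_0,\gamma_2\overline{\sigma}\}$. The base case is $k=0$ before any inner iteration, where $\sigma_0\leq\max\{\sigma_0,\gamma_2\overline{\sigma}\}$ trivially. For the inductive step, suppose the current value $\sigma_k$ satisfies the bound. If the inner loop performs the update $\sigma_k\gets\gamma_2\sigma_k$, this happens only in the branch $\rho_k<\eta_1$; by the contrapositive of Lemma~\ref{lm:42}, $\rho_k<\eta_1$ forces $\sigma_k<\overline{\sigma}$, so the new value $\gamma_2\sigma_k<\gamma_2\overline{\sigma}\leq\max\{\sigma_0,\gamma_2\overline{\sigma}\}$, and the invariant is preserved. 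If instead $\rho_k\geq\eta_1$, either $\sigma_k$ is left unchanged (still within the bound) or it is shrunk by the factor $\gamma_1<1$ in the $\rho_k\geq\eta_2$ sub-branch, which only decreases it; either way the invariant holds, and then the loop breaks. Finally, at step~24 we set $\sigma_{k+1}=\sigma_k$, so the bound carries over to the next outer iteration. This establishes \eqref{si} for all $k\geq0$.

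I should also note, for completeness, that this argument implicitly uses that the inner loop terminates after finitely many passes — otherwise $\sigma_k$ would not be well-defined — but that termination is exactly what Lemma~\ref{lm:42} provides: once $\sigma_k$ is enlarged past $\overline{\sigma}$ (which happens after at most $\lceil\log_{\gamma_2}(\overline{\sigma}/\sigma_k^{\mathrm{init}})\rceil$ passes, using the induction hypothesis to keep intermediate values finite), the condition $\rho_k\geq\eta_1$ triggers the break. The main obstacle here is not really mathematical depth — the proof is a short bookkeeping argument — but rather being careful about the bookkeeping: one must track the value of $\sigma_k$ through the three places it can change (the $\gamma_2$-enlargement in step~21, the $\gamma_1$-shrinkage in step~18, and the carry-over $\sigma_{k+1}=\sigma_k$ in step~24) and make sure the "only enlarged while below threshold'' logic is applied at precisely the right moment, namely to the value of $\sigma_k$ that was used in the trial that produced the offending $\rho_k$. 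Getting the quantifiers right on "the value just before the $\gamma_2$-update was below $\overline{\sigma}$'' is the one subtlety to state cleanly.
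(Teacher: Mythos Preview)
Your proposal is correct and takes essentially the same approach as the paper: both argue by induction using Lemma~\ref{lm:42} to ensure that the $\gamma_2$-enlargement can only be applied while $\sigma_k<\overline{\sigma}$, so the resulting value is capped at $\gamma_2\overline{\sigma}$. Your version is actually more explicit about the inner-loop bookkeeping (tracking the invariant through each pass), whereas the paper's proof inducts only on the outer index~$k$ and handles the inner loop implicitly in its case~(1); but the logic is identical.
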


\begin{proof}
The proof is by induction. It is obvious that \eqref{si} holds for $k=0$.
Assume that it holds for $k=j$.
We consider two cases of the value of $\sigma_j$. (1) If $\sigma_j<\overline{\sigma}$,
by the procedure of Algorithm \ref{alg:ada_manpqn} (see steps 15--22) and Lemma \ref{lm:42}, we have $\sigma_{j+1}<\gamma_2\overline{\sigma}$.
(2) If $\sigma_j\geq\overline{\sigma}$,
from Lemma \ref{lm:42}, it follows that $\rho_j\geq\eta_1$.
Thus, by steps 15--19 of Algorithm \ref{alg:ada_manpqn}, we have
\[
\sigma_{j+1}
\leq\sigma_{j}
\leq\max\{\sigma_{0}, \gamma_2\overline{\sigma}\}.
\]
Then, \eqref{si} holds for $k=j+1$. The assertion holds.
\end{proof}

Next, we prove the global convergence of Algorithm \ref{alg:ada_manpqn}.

\begin{theorem}
\label{thm:32_1}
Suppose Assumption \ref{ass:1} holds.
Then $\alpha_k\geq\gamma\overline{\alpha}$, where $\gamma$ is stated in the step 11 of Algorithm \ref{alg:ada_manpqn},
and
\begin{equation}
\label{eq:over_alpha}
\overline{\alpha}:=\min\{1,\frac{(2-\sigma)\kappa_1}{2c_2}\},
\end{equation}
where $\sigma$ is the parameter in \eqref{eq:line_search} and $c_2$ is defined in \eqref{c2}.
Thus the backtracking line search procedure (steps 10--12 of Algorithm \ref{alg:ada_manpqn}) will terminate in finite steps.
Moreover, we have $\lim_{k\to\infty}\|V_k\|=0$ and all accumulation points of $\{X_k\}$ are stationary points of problem \eqref{eq:prob}.
\end{theorem}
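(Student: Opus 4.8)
The plan is to settle the four assertions in order, recycling the estimates already assembled for Lemmas \ref{lm:41}--\ref{lm:43}. First, to obtain finite termination of the backtracking loop together with $\alpha_k\ge\gamma\overline\alpha$, I would show that the line-search test \eqref{eq:line_search} holds automatically for every $\alpha\in(0,\overline\alpha]$. Starting from \eqref{eq:4_10} and inserting the sufficient-decrease bound \eqref{eq:phi_alpha} for $\phi_k(\alpha V_k)-\phi_k(0)$, the terms $\tfrac12\langle(\mathcal{B}_k+\sigma_kI)[\alpha V_k],\alpha V_k\rangle$ cancel, leaving $F({\bf R}_{X_k}(\alpha V_k))\le F(X_k)-\alpha\langle(\mathcal{B}_k+\sigma_kI)[V_k],V_k\rangle+c_2\alpha^2\|V_k\|^2$. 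Bounding $F(X_k)\le F(X_{l(k)})$, using $\langle(\mathcal{B}_k+\sigma_kI)[V_k],V_k\rangle\ge\kappa_1\|V_k\|^2$, and noting $\|V_k\|_{\mathcal{B}_k}^2\le\langle(\mathcal{B}_k+\sigma_kI)[V_k],V_k\rangle$ (since $\sigma_k>0$), the test \eqref{eq:line_search} reduces to $c_2\alpha\le\tfrac{2-\sigma}{2}\kappa_1$, i.e.\ $\alpha\le\tfrac{(2-\sigma)\kappa_1}{2c_2}$. Hence \eqref{eq:line_search} is satisfied once $\alpha\le\overline\alpha$; since $\alpha_k=\gamma^{N_k}$ with $N_k$ minimal, either $N_k=0$ and $\alpha_k=1\ge\gamma\overline\alpha$, or $\gamma^{N_k-1}>\overline\alpha$ and $\alpha_k>\gamma\overline\alpha$. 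In both cases $N_k<\infty$ and $\alpha_k\ge\gamma\overline\alpha$; crucially $\overline\alpha$ depends on neither $k$ nor $\sigma_k$, so this remains valid on every pass of the while-loop in steps 7--23.

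Next I would prove $\lim_k\|V_k\|=0$ by the standard nonmonotone line-search argument (see \cite{gll1986}), using that $F$ is continuous on the compact manifold $\mathcal{M}$, hence bounded below and uniformly continuous. Since for indices $j\le k$ lying in the window of $l(k+1)$ we have $F(X_j)\le F(X_{l(k)})$, and $F(X_{k+1})\le F(X_{l(k)})$ by \eqref{eq:line_search}, the sequence $\{F(X_{l(k)})\}$ is nonincreasing and converges to some $F^*$. Combining \eqref{eq:line_search}, the bound $\alpha_k\ge\gamma\overline\alpha$, and (A.3) gives $F(X_{k+1})\le F(X_{l(k)})-\tfrac12\sigma\gamma\overline\alpha\kappa_1\|V_k\|^2$. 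Applying this with $k$ replaced by $l(k)-1$ (legitimate since $l(k)\ge k-m\to\infty$) and using that every subsequence of $\{F(X_{l(k)})\}$ also tends to $F^*$, I obtain $\|V_{l(k)-1}\|\to0$; then Proposition \ref{prop2_1} gives $\|X_{l(k)}-X_{l(k)-1}\|\to0$ and uniform continuity gives $F(X_{l(k)-1})\to F^*$. A finite induction on $j=1,\dots,m+1$ propagates this backwards, yielding $\|V_{l(k)-j}\|\to0$ and $F(X_{l(k)-j})\to F^*$ for each such $j$. Since every index $k$ equals $l(k+m+1)-j$ for some $j\in\{1,\dots,m+1\}$, it follows that $\|V_k\|\to0$.

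Finally, let $X^*$ be any accumulation point, say $X_{k_i}\to X^*$. By \eqref{eq:4_sub_opt} choose $\xi_{k_i}\in\partial h(X_{k_i}+V_{k_i})$ with ${\rm grad}f(X_{k_i})+(\mathcal{B}_{k_i}+\sigma_{k_i}I)[V_{k_i}]+{\rm Proj}_{{\rm T}_{X_{k_i}}\mathcal{M}}\xi_{k_i}=0$. By (A.3), the boundedness of $\{\sigma_k\}$ from Lemma \ref{lm:43}, and $\|V_{k_i}\|\to0$, the middle term vanishes; ${\rm grad}f$ is continuous and ${\rm Proj}_{{\rm T}_X\mathcal{M}}$ depends continuously on $X$ through \eqref{eq:proj}; and $\{\xi_{k_i}\}$ is bounded because $h$ is $L_h$-Lipschitz, so along a further subsequence $\xi_{k_i}\to\xi^*$, with $\xi^*\in\partial h(X^*)$ since $X_{k_i}+V_{k_i}\to X^*$ and $\partial h$ has closed graph. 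Passing to the limit yields $0\in{\rm grad}f(X^*)+{\rm Proj}_{{\rm T}_{X^*}\mathcal{M}}\partial h(X^*)$, which is exactly \eqref{eq:2_opt}; hence $X^*$ is a stationary point. The only genuinely delicate step is the nonmonotone argument in the second paragraph: the backward induction over the window length $m$ and the bookkeeping required to pass from convergence along the maximizing indices $l(k)$ to convergence of the whole sequence $\{\|V_k\|\}$. The remaining parts are routine assembly of the preceding lemmas together with standard continuity and closed-graph facts.
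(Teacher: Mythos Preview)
Your proposal is correct and follows essentially the same route as the paper: the line-search lower bound is obtained by combining \eqref{eq:4_10} with \eqref{eq:phi_alpha} (your cancellation is algebraically equivalent to the paper's display $F({\bf R}_{X_k}(\alpha V_k))\le F(X_{l(k)})+(c_2\alpha-\sigma_k)\alpha\|V_k\|^2-\alpha\|V_k\|^2_{\mathcal{B}_k}$), the monotonicity of $\{F(X_{l(k)})\}$ and $\|V_k\|\to0$ are handled via the standard GLL backward-induction argument (the paper simply cites \cite[Theorem~4.1]{manpqn2023} for this), and the stationary-point claim is the same passage to the limit in \eqref{eq:4_sub_opt}. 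The only difference is that you spell out the nonmonotone induction and the closed-graph/continuity details where the paper defers to references or states the conclusion in one line.
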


\begin{proof}
From $\sigma<1$ and \eqref{eq:over_alpha}, it follows that $\overline{\alpha}>0$.
For any $0<\alpha\leq 1$, by \eqref{eq:4_10} and taking into account \eqref{eq:phi_alpha}, we can deduce that
\begin{eqnarray}
\nonumber
F({\bf R}_{X_k}(\alpha V_k))
&\leq&
F(X_k)+\phi_k(\alpha_kV_k)-\phi_k(0)+(c_2-\frac{1}{2}\sigma_k)\alpha^2\|V_k\|^2-\frac{1}{2}\alpha^2\|V_k\|^2_{\mathcal{B}_k}
\\
\nonumber
&\leq&
F(X_{l(k)})+(c_2\alpha-\sigma_k)\alpha\|V_k\|^2-\alpha\|V_k\|^2_{\mathcal{B}_k},
\end{eqnarray}
where $l(k)$ is defined in \eqref{eq:3_nls_index}.
By the above inequality and \eqref{eq:bk_norm}, we can see that if $0<\alpha\leq\overline{\alpha}$,
\begin{eqnarray}
\nonumber
F({\bf R}_{X_k}(\alpha V_k))
&\leq&
F(X_{l(k)})+\frac{(2-\sigma)\kappa_1}{2}\alpha\|V_k\|^2-\alpha\|V_k\|^2_{\mathcal{B}_k}
\\
\nonumber
&\leq&
F(X_{l(k)})+(\frac{(2-\sigma)}{2}-1)\alpha\|V_k\|^2_{\mathcal{B}_k}
=
F(X_{l(k)})-\frac{1}{2}\sigma\alpha\|V_k\|^2_{\mathcal{B}_k},
\end{eqnarray}
which implies that \eqref{eq:line_search} holds for any $\alpha\in(0,\overline{\alpha}]$.
Thus, from steps 10--12 of Algorithm \ref{alg:ada_manpqn}, we must have $\alpha_k\geq\gamma\overline{\alpha}$.

Note that $X_k$ is updated only when $\rho_k\geq\eta_1$.
Thus, by \eqref{eq:3_nls_index}, \eqref{eq:3_rhok} and \eqref{eq:phi_alpha}, we have
\begin{eqnarray}
\nonumber
F(X_{l(k+1)})
&\leq&
\max\{F(X_{k+1}),F(X_{l(k)})\}
\\
\nonumber
&\leq&
\max\{F(X_{l(k)})-\eta_1(\phi_k(0)-\phi_k(\alpha_kV_k)), F(X_{l(k)})\}=F(X_{l(k)}).
\end{eqnarray}
Then $\{F(X_{l(k)})\}_k$ is a non-increasing sequence.
Using the same argument as that in \cite[Theorem 4.1]{manpqn2023},
we can prove that $\lim_{k\to\infty}\|V_k\|=0$.

Let $X^*$ be an accumulation point of sequence $\{X_k\}$.
By \eqref{eq:4_sub_opt} and $\lim_{k\to\infty}\|V_k\|=0$, we know that $X^*$ satisfies \eqref{eq:2_opt}. The proof is complete.
\end{proof}

\vskip 2mm

For composite optimization in Euclidean space, a linear convergence rate is obtained for the regularized proximal quasi-Newton methods under the condition that $f$ is strongly convex in \cite[Theorem 3]{reg2018}.
For composite optimization over the Stiefel manifold,
we can only obtain a local linear convergence rate for ARPQN under the following assumption, which is used in \cite{manpqn2023} to prove the local linear convergence of the proximal quasi-Newton algorithm.

\vskip 2mm

\begin{assumption}
\label{ass:2}
The function $f$ is twice continuously differentiable. The sequence $\{X_k\}$ has an accumulation point $X^*$ such that
\begin{equation}\label{eq:4_hess}
\lambda_{\min}({\rm Hess} (f\circ{\bf R}_{X^*})(0_{X^*}))\geq\delta,
\end{equation}
where $\delta>5L_hM_2$.
\end{assumption}

\vskip 2mm

By Lemma \ref{lm:43} and Assumption \ref{ass:1}, we have
$\kappa_1\|V\|^2
\leq\langle (\mathcal{B}_k+\sigma_kI)[V],V\rangle
\leq (\kappa_2+\max\{\sigma_0,\gamma_2\overline{\sigma}\})\|V\|^2$ for all $V\in {\rm T}_{X_k}\mathcal{M}$.
Thus, all conditions of \cite[Theorem 4.3]{manpqn2023} are satisfied.
By Theorems 4.2 and 4.3 in \cite{manpqn2023}, we have the following result.

\vskip 2mm

\begin{theorem}
\label{thm:423}
Suppose Assumptions \ref{ass:1} and \ref{ass:2} hold, and $X^*$ is the accumulation point of $\{X_k\}$ which satisfies \eqref{eq:4_hess}.
Then, $X_k$ converges to $X^*$ and there exist $K>0$ and $\tau\in(0,1)$ such that
\begin{equation}\label{eq:4_linear}
F(X_k)-F(X^*)\leq \tau^{k-K}\big(F(X_{l(K)})-F(X^*)\big),\quad \forall~k\geq K.
\end{equation}
\end{theorem}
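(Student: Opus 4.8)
The plan is to reduce the statement to the convergence-rate theory already developed for the proximal quasi-Newton algorithm in \cite{manpqn2023}, by observing that ARPQN is precisely an instance of that framework in which the role of the Hessian surrogate is played by $\mathcal{B}_k+\sigma_kI$ rather than by $\mathcal{B}_k$. The first step is to record the uniform two-sided bound on this operator: since $\sigma_k>0$, the lower estimate $\kappa_1\|V\|^2\le\langle(\mathcal{B}_k+\sigma_kI)[V],V\rangle$ is immediate from (A.3), while Lemma~\ref{lm:43} gives $\sigma_k\le\max\{\sigma_0,\gamma_2\overline\sigma\}=:\overline\sigma_{\max}$, hence the upper estimate with constant $\kappa_2+\overline\sigma_{\max}$. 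Thus the subproblem operator has eigenvalues in a fixed compact subinterval of $(0,\infty)$ independent of $k$, which is exactly the structural hypothesis the analysis of \cite{manpqn2023} imposes on its Hessian approximation; moreover the nonmonotone line search \eqref{eq:line_search} together with the update $X_{k+1}=\mathbf{R}_{X_k}(\alpha_kV_k)$ reproduces the ManPQN iteration. Theorem~\ref{thm:32_1} already supplies the two global ingredients needed downstream, namely $\alpha_k\ge\gamma\overline\alpha>0$ and $\|V_k\|\to 0$ with every accumulation point stationary.

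Next I would show that the \emph{whole} sequence converges to $X^*$, not just a subsequence. The mechanism is the local nondegeneracy furnished by Assumption~\ref{ass:2}: \eqref{eq:4_hess} forces $f\circ\mathbf{R}_{X^*}$ to be strongly convex near $0_{X^*}$, and, combined with the convex Lipschitz part $h$ and the second-order retraction bound \eqref{eq6}, this yields a quadratic-growth inequality of the form $F(X)-F(X^*)\ge c\,\mathrm{dist}(X,X^*)^2$ on a neighbourhood of $X^*$ (this is where the margin $\delta>5L_hM_2$ is spent). Since $\|X_{k+1}-X_k\|\le M_1\|V_k\|\to 0$ by \eqref{eq5} and Theorem~\ref{thm:32_1}, and since a subsequence of $\{X_k\}$ enters every neighbourhood of $X^*$, a standard capture argument (as in \cite[Theorem~4.2]{manpqn2023}) yields $X_k\to X^*$.

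For the linear rate I would work with the nonmonotone reference value $F(X_{l(k)})$. The line search gives the per-iteration decrease $F(X_{k+1})\le F(X_{l(k)})-\tfrac12\sigma\alpha_k\|V_k\|_{\mathcal{B}_k}^2\le F(X_{l(k)})-c_3\|V_k\|^2$ with $c_3>0$ fixed (using $\alpha_k\ge\gamma\overline\alpha$ and (A.3)); conversely, once the iterates are close to $X^*$, the subproblem optimality \eqref{eq:4_sub_opt} together with the local strong convexity lets one bound $F(X_k)-F(X^*)\le C\|V_k\|^2$ (by comparing $X_k$ with $X_k+V_k$ and transferring to the retraction via \eqref{eq6}). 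Chaining these two inequalities over a window of length $m$ produces $F(X_{l(k+1)})-F(X^*)\le\tau\bigl(F(X_{l(k-m)})-F(X^*)\bigr)$ for some $\tau\in(0,1)$, and unrolling this recursion delivers \eqref{eq:4_linear} — which is exactly the conclusion of \cite[Theorem~4.3]{manpqn2023}, whose hypotheses have now been checked. The only place genuine work is required, and hence the main obstacle, is the transfer of the Euclidean second-order estimates at $X^*$ to statements about $F$ along the retraction curves: controlling the gap between $h(X_k+V_k)$ and $h(\mathbf{R}_{X_k}(V_k))$, and between the pulled-back and the ambient quadratic models, which is precisely why Assumption~\ref{ass:2} is phrased in terms of ${\rm Hess}(f\circ\mathbf{R}_{X^*})(0_{X^*})$ and carries the explicit threshold $5L_hM_2$.
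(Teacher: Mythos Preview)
Your proposal is correct and follows the paper's own approach exactly: establish the uniform two-sided bound $\kappa_1\|V\|^2\le\langle(\mathcal{B}_k+\sigma_kI)[V],V\rangle\le(\kappa_2+\max\{\sigma_0,\gamma_2\overline\sigma\})\|V\|^2$ via (A.3) and Lemma~\ref{lm:43}, then observe that this is precisely the hypothesis needed to invoke \cite[Theorems~4.2 and~4.3]{manpqn2023}. The paper is in fact even more terse than you are---it stops the moment the operator bound is recorded and simply cites those two theorems, whereas you additionally sketch their internal mechanism (the capture argument and the chaining of the sufficient-decrease and error-bound inequalities over a nonmonotone window); this extra detail is consistent with the cited proofs and not required here.
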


\subsection{Complexity Analysis}
\label{sec:33}
In this subsection, we analyze the iteration complexity of Algorithm \ref{alg:ada_manpqn}.
Note that the complexity result in this subsection does not need Assumption \ref{ass:2}.

\vskip 2mm

\begin{definition}[$\epsilon$-stationary point \cite{mashiqian2020}]
Given $\epsilon>0$ and a point $X_k$ generated by Algorithm \ref{alg:ada_manpqn},
$X_k$ is an $\epsilon$-stationary point of \eqref{eq:prob} if the optimal solution $V_k$ of \eqref{eq:sub_prob} satisfies $\|V_k\|\leq\epsilon$.
\end{definition}

\vskip 2mm

In the following results, we give an upper bound for the number of the outer loops to reach an $\epsilon$-stationary point of \eqref{eq:prob};
we also provide an upper bound for the total numbers of the inner ``while" loops (steps 7-23).
This is necessary for estimating the computational cost of Algorithm \ref{alg:ada_manpqn}.
In fact, at the $k$-th outer loop, if $\rho_k<\eta_1$, then $\sigma_k$ will increase by $\gamma_2$ times,
and $V_k$ will be computed by the adaptive regularized semismooth Newton (ASSN) method again in the next inner loop.
In our analysis, we use $r(k)$ to denote the number of the times of calling the ASSN method at the $k$-th outer loop.

Recall that $\overline{\sigma}$ and $\overline{\alpha}$ are defined in \eqref{eq:over_sigma} and \eqref{eq:over_alpha} respectively.
Scalars $\gamma$, $\gamma_1$ and $\eta_1$ are parameters of Algorithm \ref{alg:ada_manpqn}.
For $\epsilon>0$, denote
\begin{eqnarray}\label{theta}
\Theta&:=&\frac{2(F(X_0)-F^*)}{\eta_1\kappa_1\gamma\overline{\alpha}(2-\gamma\overline{\alpha})\epsilon^2},
\end{eqnarray}
where $F^*$ is the optimal value of \eqref{eq:prob}.

\vskip 2mm

\begin{theorem}
\label{thm:43}
Suppose Assumptions \ref{ass:1} holds.
An $\epsilon$-stationary point of Algorithm \ref{alg:ada_manpqn} will be found in at most $(m+1)\lceil\Theta\rceil$ outer loops,
where $\Theta$ is defined by \eqref{theta} and $\lceil\cdot\rceil$ denotes rounding up to the next integer.
Moreover, we have
\begin{equation}
\nonumber
%\label{eq:inn_comp}
\sum_{i=0}^{(m+1)\lceil\Theta\rceil-1}r(i)
\leq
(m+1)\left\lceil\Theta\right\rceil\log_{\gamma_2}\left(\frac{\gamma_2}{\gamma_1}\right)
+\log_{\gamma_2}\left(\frac{\max\{\sigma_0,\gamma_2\overline{\sigma}\}}{\sigma_0}\right)
+1.
\end{equation}
\end{theorem}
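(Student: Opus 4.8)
The plan is to prove the two parts separately, tying both to the already-established facts that $\sigma_k$ is bounded (Lemma~\ref{lm:43}) and that each accepted step produces a guaranteed decrease in $F$ controlled by $\|V_k\|^2$.

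For the bound on outer loops, I would first argue that whenever an outer iteration $k$ succeeds (i.e.\ $\rho_k\geq\eta_1$), combining the definition \eqref{eq:3_rhok} of $\rho_k$ with \eqref{eq:phi_alpha} of Lemma~\ref{lm:41} and the stepsize lower bound $\alpha_k\geq\gamma\overline{\alpha}$ from Theorem~\ref{thm:32_1} gives
\begin{equation}
\nonumber
F(X_{l(k)})-F(X_{k+1})\geq\eta_1\big(\phi_k(0)-\phi_k(\alpha_kV_k)\big)\geq\tfrac{1}{2}\eta_1\kappa_1\gamma\overline{\alpha}(2-\gamma\overline{\alpha})\|V_k\|^2.
\end{equation}
As long as no $\epsilon$-stationary point has been reached, $\|V_k\|>\epsilon$, so each success drops the ``reference value'' $F(X_{l(k)})$ by at least a fixed amount, or more precisely drops $F(X_{k+1})$ below $F(X_{l(k)})$ by that amount. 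The subtlety here is the nonmonotone line search: a single good step only bounds $F(X_{l(k)})-F(X_{k+1})$, not $F(X_{l(k)})-F(X_{l(k+1)})$. The standard device is to look at the subsequence of indices $k$ that are of the form $l(j)$ for some $j$, or equivalently to group iterations into blocks of length $m+1$; over any window of $m+1$ consecutive successful iterations the maximum $F(X_{l(\cdot)})$ must strictly decrease by the fixed amount. Summing these decrements against the total available decrease $F(X_0)-F^*$ yields at most $\lceil\Theta\rceil$ such windows, hence at most $(m+1)\lceil\Theta\rceil$ outer loops before $\|V_k\|\leq\epsilon$. I expect this nonmonotone-to-monotone reduction to be the main obstacle, though it is by now a routine argument.

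For the bound on the total number of inner ``while'' loops, the idea is purely a telescoping/accounting argument on $\log\sigma_k$. At outer loop $i$, the inner loop runs $r(i)$ times; each of the first $r(i)-1$ failed passes multiplies $\sigma$ by $\gamma_2$, and Lemma~\ref{lm:42} guarantees that once $\sigma\geq\overline{\sigma}$ the pass succeeds, so at most $\log_{\gamma_2}(\gamma_2\overline{\sigma}/\sigma_i)+1$ passes occur; more usefully, writing the multiplicative change in $\sigma$ across outer loop $i$ as at least $\gamma_2^{\,r(i)-1}\gamma_1$ (the $\gamma_1$ only if the success triggers a shrink, otherwise the factor is $\gamma_2^{\,r(i)-1}$, which is $\geq\gamma_2^{\,r(i)-1}\gamma_1$ since $\gamma_1<1$), one gets
\begin{equation}
\nonumber
\frac{\sigma_{i+1}}{\sigma_i}\geq\gamma_2^{\,r(i)-1}\gamma_1,\qquad\text{equivalently}\qquad r(i)-1\leq\log_{\gamma_2}\!\Big(\frac{\sigma_{i+1}}{\sigma_i}\Big)+\log_{\gamma_2}\!\Big(\frac{\gamma_2}{\gamma_1}\Big)-1.
\end{equation}
Summing $r(i)$ over $i=0,\dots,N-1$ with $N=(m+1)\lceil\Theta\rceil$, the $\log_{\gamma_2}(\sigma_{i+1}/\sigma_i)$ terms telescope to $\log_{\gamma_2}(\sigma_N/\sigma_0)$, which by Lemma~\ref{lm:43} is at most $\log_{\gamma_2}(\max\{\sigma_0,\gamma_2\overline{\sigma}\}/\sigma_0)$, and the remaining $N$ copies of $\log_{\gamma_2}(\gamma_2/\gamma_1)$ plus the bookkeeping $+1$ give exactly the claimed bound. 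The only care needed is to handle the two branches (shrink vs.\ no shrink) in the per-iteration inequality so that a single clean bound on $\sigma_{i+1}/\sigma_i$ holds in both cases, and to make sure the base cases and the final $+1$ absorb the off-by-one from counting $r(i)$ versus $r(i)-1$.
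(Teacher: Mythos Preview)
Your proposal is correct and follows essentially the same route as the paper: the outer-loop bound is obtained exactly as you outline, by grouping iterations into blocks of length $m+1$ so that the reference value $F(X_{l(\cdot)})$ drops by at least the fixed quantity $\tfrac{1}{2}\eta_1\kappa_1\gamma\overline{\alpha}(2-\gamma\overline{\alpha})\epsilon^2$ per block and then telescoping against $F(X_0)-F^*$; the inner-loop bound is obtained from the same per-step inequality $\sigma_{i+1}/\sigma_i\geq\gamma_1\gamma_2^{\,r(i)-1}$ you wrote, telescoped and capped by the bound on $\sigma_N$ from Lemma~\ref{lm:43}.
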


\begin{proof}
Similar to the proof of Theorem 3.2 in \cite{dyh2002}, by Lemma \ref{lm:41}, we can deduce that
\begin{equation}
\label{eq:3_nls_decrease}
F(X_{l((j+1)(m+1))})-F(X_{l(j(m+1))})
\leq
\max_{1\leq i\leq m}
\left\{-\frac{1}{2}\gamma\overline{\alpha}(2-\gamma\overline{\alpha})\eta_1\kappa_1\|V_{j(m+1)+i}\|^2\right\}.
\end{equation}
Assume that Algorithm \ref{alg:ada_manpqn} does not terminate after $K(m+1)$ outer loops,
that is $\|V_i\|\geq\epsilon$ for $i=0,1,\dots,K(m+1)-1$.
By \eqref{eq:3_nls_decrease}, we have
\[
F(X_0)-F^*
\geq
F(X_{l(0)})-F(X_{l(K(m+1))})
\geq
\frac{1}{2}\gamma\overline{\alpha}(2-\gamma\overline{\alpha})\eta_1\kappa_1\epsilon^2K.
\]
%where the third inequality is due to \eqref{eq:phi_alpha} and the last inequality uses the fact $\gamma\overline{\alpha}\leq\alpha_k\leq1$.
Then we have
\begin{eqnarray}\label{eq:out_comp}
K\leq \frac{2(F(X_0)-F^*)}{\eta_1\kappa_1\gamma\overline{\alpha}(2-\gamma\overline{\alpha})\epsilon^2}
=\Theta,
\end{eqnarray}
which implies the first assertion.

For each $i\geq0$, by the procedure of Algorithm \ref{alg:ada_manpqn} (see steps 15--22), taking into account the definition of $r(i)$,
we have $\sigma_{i+1}/\sigma_i\geq\gamma_1\gamma_2^{r(i)-1}$.
Thus, by \eqref{si}, we have
\begin{eqnarray}
\nonumber
\max\{\sigma_0,\gamma_2\overline{\sigma}\}
&\geq&
\sigma_{K(m+1)}
=
\sigma_0\prod_{i=0}^{K(m+1)-1}\frac{\sigma_{i+1}}{\sigma_i}
\geq
\sigma_0\prod_{i=0}^{K(m+1)-1}(\gamma_1\gamma_2^{r(i)-1})
\\
\nonumber
&=&
\sigma_0(\frac{\gamma_1}{\gamma_2})^{K(m+1)}\gamma_2^{\sum_{i=0}^{K(m+1)-1}r(i)}.
\end{eqnarray}
The above inequality can be rewritten as
\begin{eqnarray}
\nonumber
\sum_{i=0}^{K(m+1)-1}r(i)
&\leq&
\log_{\gamma_2}\left(\frac{\max\{\sigma_0,\gamma_2\overline{\sigma}\}}{\sigma_0}
(\frac{\gamma_2}{\gamma_1})^{K(m+1)}\right)
\\
\nonumber
&=&
K(m+1)\log_{\gamma_2}\left(\frac{\gamma_2}{\gamma_1}\right)
+\log_{\gamma_2}\left(\frac{\max\{\sigma_0,\gamma_2\overline{\sigma}\}}{\sigma_0}\right)
+1.
\end{eqnarray}
Combining it with \eqref{eq:out_comp} yields
\[
\nonumber
\sum_{i=0}^{(m+1)\lceil\Theta\rceil-1}r(i)
\leq
(m+1)\left\lceil\Theta\right\rceil\log_{\gamma_2}\left(\frac{\gamma_2}{\gamma_1}\right)
+\log_{\gamma_2}\left(\frac{\max\{\sigma_0,\gamma_2\overline{\sigma}\}}{\sigma_0}\right)
+1.
\]
%which implies \eqref{eq:inn_comp}.
Thus the assertion holds.
\end{proof}

\section{Adaptive Regularized Proximal Newton-Type Methods with Superlinear Convergence Rate}
\label{sec:new_algo}

In this section, we propose an adaptive regularized proximal Newton-type method, named ARPN, to solve \eqref{eq:prob}.
At the $k$-th iterate $X_k$, the subproblem of ARPN is defined as
\begin{eqnarray}
\mathop{\min}_{V\in {\rm T}_{X_k}\mathcal{M}}
\varphi_{k}(V)
&:=&
\label{sub_prob1}
\langle g_k,V\rangle
+\frac{1}{2}\langle \mathbb{H}_k[V],V\rangle
+h(\mathbf{R}_{X_k}(V)),
\end{eqnarray}
where $g_k:=\mathrm{grad}f(X_k)$ and $\mathbb{H}_k$ is a linear operator on ${\rm T}_{X_k}\mathcal{M}$.
When $\mathbb{H}_k=\frac{\widetilde{L}}{2}I$,
where $\widetilde{L}>0$, the subproblem \eqref{sub_prob1} has been considered by Huang and Wei in \cite{huang2021riemannian},
and the local linear convergence rate of their algorithm is established.
In the ARPN method, $\mathbb{H}_k$ is set to be $H_k+\sigma_kI$,
where $\sigma_k>0$ is the regularization parameter and $H_k$ is the Hessian of $f$ at $X_k$ or generated by the quasi-Newton method.
The global convergence and the local superlinear convergence rate of ARPN are established for both two cases of $H_k$.
The ARPN method is stated in Algorithm \ref{algo2}.

\begin{algorithm}
\caption{An Adaptive regularized proximal Newton-type algorithm for Riemannian composite optimization (ARPN) }
\label{algo2}
\begin{algorithmic}[1]
	\Require Initial point $X_0\in\mathcal{M}$, line-search parameters $\sigma\in(0,1/4), \gamma \in (0,1)$, initial regularization parameter $\sigma_0>0$, $0<\eta_1 <\eta_2 <1$ and $0<\gamma_1 <1<\gamma_2$,
	Lipschitz constant $L_h$ of function $h$ (refer to (A.2) in Assumption \ref{ass:1}),
	parameter $M_2$ for retraction mapping $\mathbf{R}$ (see \eqref{eq_th_ass4} in Assumption \ref{ass:th1}).
	\For{k=0,1,\dots}
	\State Update $H_k$;
	\State Set $\kappa_3=3L_hM_2$;
	\While{$\lambda_{\min}(H_k)+\frac{1}{2}\sigma_k\leq \kappa_3$}
		\State $\sigma_k\gets\gamma_2\sigma_k$;
	\EndWhile
	\State Calculate $\mathbb{H}_k=H_k+\sigma_kI$;
	\While{$1$}
	\State Solve the subproblem \eqref{sub_prob1} to obtain the search direction $V_k$;
	\State Set the initial stepsize $\alpha_k=1$;
	\While{$F(\mathbf{R}_{X_k}(\alpha_kV_k))\leq F(X_k)-\frac{1}{2}\sigma\alpha_k
	\|V_k\|_{\mathbb{H}_k}^2$ is not satisfied}
		\State $\alpha_k\gets\gamma\alpha_k$;
	\EndWhile
	\State Set $Z_k=\mathbf{R}_{X_k}(\alpha_kV_k)$;
	\State Calculate the ratio
	\begin{equation}
\label{eq:th_new_rhok}
\rho_k:=\frac{F({\bf R}_{X_k}(\alpha_k V_k))-F(X_{k})}{\varphi_k(\alpha_kV_k)-\varphi_k(0)},
\end{equation}
	\State Execute steps 15--22 of Algorithm \ref{alg:ada_manpqn};
		\EndWhile
		\State  Set $X_{k+1}=Z_k$ and $\sigma_{k+1}=\sigma_k$;
	\EndFor
\end{algorithmic}
\end{algorithm}

\vskip 2mm

\subsection{Global Convergence for the Case of $H_k=\mathrm{Hess}f(X_k)$}

In this subsection, we consider the case of $H_k=\mathrm{Hess}f(X_k)$ and prove the global convergence of ARPN.
We need the following assumption.

\vskip 2mm

\begin{assumption}
\label{ass:th1}
Assume the following:
\begin{enumerate}
\item[\rm(B.1)]
Conditions $\mathrm{(A.1)}$, $\mathrm{(A.2)}$ and $\mathrm{(A.4)}$ of Assumption \ref{ass:1} hold.
\item[\rm(B.2)]
$\mathbf{R}_{X}(\xi)$ is a second-order retraction
(for the definition of second-order retraction, we refer to \cite[Proposition 5.5.5]{absil2008}).
\item[\rm(B.3)]
$\mathbf{R}_X(\xi)$ can be extended to a mapping from $\mathcal{M}\times\mathbb{R}^{n\times r}$ to $\mathcal{M}$,
which is a continuous differentiable mapping of $(X,\xi)$ and is denoted by $\mathbf{R}_X(\xi)$ also.
Moreover, $\mathbb{D}\mathbf{R}_X(0_X)=\mathbf{id}$ for all~$X\in\mathcal{M}$, where $\mathbb{D}\mathbf{R}_X(\xi)$ is the Euclidean differential of $\mathbf{R}_X$ at $0_X$, $\mathbf{id}$ is the identity operator on $\mathbb{R}^{n\times r}$,
and there exists $M_2>0$ such that
\begin{equation}
\label{eq_th_ass4}
\|\mathbb{D}\mathbf{R}_X(\xi)-\mathbb{D}\mathbf{R}_X(\eta)\|
\leq
2M_2\|\xi-\eta\|,
\quad\forall~X\in\mathcal{M}, \quad\forall~\xi,\eta\in\mathrm{T}_X\mathcal{M}.
\end{equation}
\item[\rm(B.4)] $f$ and $\mathbf{R}_X$ are twice differentiable, and
$\mathrm{Hess}f(\mathbf{R}_X(V))$ is continuous with respect to $(X,V)$.
\end{enumerate}
\end{assumption}

\vskip 2mm

\begin{remark}
\label{rm_retr}
The retraction based on the polar decomposition is given by (see \cite[Example 4.1.3 ]{absil2008})
\[
\mathbf{R}_X(\xi):=(X+\xi)(I_r+\xi^T\xi)^{-1/2},\quad\forall~X\in\mathcal{M}, \quad\forall~\xi\in\mathrm{T}_X\mathcal{M}.
\]
By \cite[Proposition 7]{AbsMal2012}, $\mathbf{R}_X(\xi)$ is just the projection of $X+\xi$ onto $\mathcal{M}$,
which is also the retraction based on the singular value decomposition (SVD).
Moreover, by \cite[Example 23]{AbsMal2012}, $\mathbf{R}_X$ is a second-order retraction.
It is easy to verify that $\mathbf{R}_X$ satisfies conditions $\rm(B.2)$, $\rm(B.3)$ and $\rm(B.4)$ of Assumption \ref{ass:th1}.

If \eqref{eq_th_ass4} holds, for all $\xi,~\eta\in \mathrm{T}_X\mathcal{M}$, we can deduce that
\begin{eqnarray}
\label{eq_th_ass3}
\|\mathbf{R}_X(\xi)-\mathbf{R}_X(\eta)-\mathbb{D}\mathbf{R}_X(\eta)[\xi-\eta]\|
&\leq&
M_2\|\xi-\eta\|^2,
\\
\label{eq_th_ass1}
\|\mathbf{R}_X(\xi)-X-\xi\|
&\leq&
M_2\|\xi\|^2.
\end{eqnarray}
\end{remark}

In the following results, the notation $T^*$ refers to the adjoint operator of $T$ \cite[p.191]{absil2008},
where $T:E_1\to E_2$ is a linear operator and $E_1,E_2$ are two Euclidean spaces.
The linear operator $T^*$ satisfies $\langle T[x],y\rangle=\langle x,T^*[y]\rangle$ for all~$x\in E_1$ and $y\in E_2$.

\vskip 2mm

For any $k\geq0$, $h\circ\mathbf{R}_{X_k}$ may be nonconvex,
and therefore $\arg\min_{V\in\mathrm{T}_{X_k}\mathcal{M}}\varphi_k(V)$ may not be a singleton.
At the $k$-th iteration, we select a $V_k\in\arg\min_{V\in\mathrm{T}_{X_k}\mathcal{M}}\varphi_k(V)$.
The following lemma shows that $V_k$ provides sufficient decrease in $\varphi_{k}$.

\vskip 2mm

\begin{lemma}
\label{lm_th_phi}
Suppose Assumption \ref{ass:th1} holds.
Then we have
\begin{equation}
\label{eq_th_1}
\varphi_{k}(\alpha V_k)-\varphi_{k}(0)
\leq
\frac{1}{2}(L_hM_2+\frac{1}{2}\sigma_k)(\alpha-2)\alpha\|V_k\|^2,
~\forall~\alpha\in[0,1],
\end{equation}
where $M_2$ is given as in \eqref{eq_th_ass4} and $L_h$ is the Lipschitz constant of $h$.
\end{lemma}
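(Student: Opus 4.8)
The plan is to follow the template of Lemma~\ref{lm:41}, the new twist being that $h\circ\mathbf{R}_{X_k}$ is in general nonconvex, so I will compensate for the failure of convexity with the second-order retraction estimate~\eqref{eq_th_ass3} and with the curvature floor enforced in Step~4 of Algorithm~\ref{algo2}: once the inner ``while'' loop terminates, $\lambda_{\min}(\mathbb{H}_k)=\lambda_{\min}(H_k)+\sigma_k\ge\kappa_3+\tfrac12\sigma_k=3L_hM_2+\tfrac12\sigma_k$, hence $\langle\mathbb{H}_k[V_k],V_k\rangle\ge(3L_hM_2+\tfrac12\sigma_k)\|V_k\|^2$. \emph{Step 1 (an optimality estimate).} From $V_k\in\arg\min_{V\in\mathrm{T}_{X_k}\mathcal{M}}\varphi_k(V)$ and the subdifferential chain rule for $V\mapsto h(\mathbf{R}_{X_k}(V))$ (valid since $h$ is convex and $\mathbf{R}_{X_k}$ is smooth), there is $\eta_k\in\partial h(\mathbf{R}_{X_k}(V_k))$ with $\|\eta_k\|\le L_h$ (as $h$ is $L_h$-Lipschitz) such that $g_k+\mathbb{H}_k[V_k]+\mathrm{Proj}_{\mathrm{T}_{X_k}\mathcal{M}}\!\big(\mathbb{D}\mathbf{R}_{X_k}(V_k)^*[\eta_k]\big)=0$. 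Pairing with $V_k\in\mathrm{T}_{X_k}\mathcal{M}$ gives $\langle g_k+\mathbb{H}_k[V_k],V_k\rangle=-\langle\eta_k,\mathbb{D}\mathbf{R}_{X_k}(V_k)[V_k]\rangle$; combining this with the convexity inequality $h(X_k)-h(\mathbf{R}_{X_k}(V_k))\ge\langle\eta_k,X_k-\mathbf{R}_{X_k}(V_k)\rangle$ and bounding $\|X_k-\mathbf{R}_{X_k}(V_k)+\mathbb{D}\mathbf{R}_{X_k}(V_k)[V_k]\|\le M_2\|V_k\|^2$ via~\eqref{eq_th_ass3} (with $\xi=0,\eta=V_k$) yields
\[
h(\mathbf{R}_{X_k}(V_k))-h(X_k)\le-\langle g_k,V_k\rangle-\langle\mathbb{H}_k[V_k],V_k\rangle+L_hM_2\|V_k\|^2 .
\]

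\emph{Step 2 (interpolation and convexity).} I expand $\mathbf{R}_{X_k}(0)$ and $\mathbf{R}_{X_k}(V_k)$ about the common point $\alpha V_k\in\mathrm{T}_{X_k}\mathcal{M}$ using~\eqref{eq_th_ass3} (with $\eta=\alpha V_k$ and $\xi\in\{0,V_k\}$). In the combination $(1-\alpha)\mathbf{R}_{X_k}(0)+\alpha\mathbf{R}_{X_k}(V_k)$ the two linear contributions $\pm(1-\alpha)\alpha\,\mathbb{D}\mathbf{R}_{X_k}(\alpha V_k)[V_k]$ cancel, and the two remainders add to at most $(1-\alpha)M_2\alpha^2\|V_k\|^2+\alpha M_2(1-\alpha)^2\|V_k\|^2=M_2\alpha(1-\alpha)\|V_k\|^2$, so
\[
\|\mathbf{R}_{X_k}(\alpha V_k)-(1-\alpha)X_k-\alpha\mathbf{R}_{X_k}(V_k)\|\le M_2\alpha(1-\alpha)\|V_k\|^2,\qquad\alpha\in[0,1].
\]
By the $L_h$-Lipschitz continuity of $h$ and then its convexity, $h(\mathbf{R}_{X_k}(\alpha V_k))-h(X_k)\le\alpha\big(h(\mathbf{R}_{X_k}(V_k))-h(X_k)\big)+L_hM_2\alpha(1-\alpha)\|V_k\|^2$.

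\emph{Step 3 (assembling).} Substituting the Step~1 estimate (scaled by $\alpha\ge0$) into the last inequality and adding $\alpha\langle g_k,V_k\rangle+\tfrac{\alpha^2}{2}\langle\mathbb{H}_k[V_k],V_k\rangle$ to both sides, the left-hand side becomes $\varphi_k(\alpha V_k)-\varphi_k(0)$ and I obtain $\varphi_k(\alpha V_k)-\varphi_k(0)\le\tfrac{\alpha(\alpha-2)}{2}\langle\mathbb{H}_k[V_k],V_k\rangle+L_hM_2(2\alpha-\alpha^2)\|V_k\|^2$. Since $\tfrac{\alpha(\alpha-2)}{2}\le0$ on $[0,1]$, I may replace $\langle\mathbb{H}_k[V_k],V_k\rangle$ by its lower bound $(3L_hM_2+\tfrac12\sigma_k)\|V_k\|^2$; the elementary identity $\tfrac{\alpha(\alpha-2)}{2}\big(3L_hM_2+\tfrac12\sigma_k\big)+L_hM_2(2\alpha-\alpha^2)=\tfrac12\big(L_hM_2+\tfrac12\sigma_k\big)(\alpha-2)\alpha$ then produces exactly~\eqref{eq_th_1}.

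The hard part will be Step~1: nonconvexity of $h\circ\mathbf{R}_{X_k}$ blocks a verbatim imitation of the subgradient argument in Lemma~\ref{lm:41}, and the right move is to peel the first-order term $\mathbb{D}\mathbf{R}_{X_k}(V_k)[V_k]$ out of $X_k-\mathbf{R}_{X_k}(V_k)$ and charge the quadratic residual to~\eqref{eq_th_ass3}; this is precisely where the extra $L_hM_2\|V_k\|^2$ (ultimately absorbed by the curvature floor $3L_hM_2$ built into Step~4 of Algorithm~\ref{algo2}) appears. A subtler point is that in Step~2 the expansion must be taken about the common base point $\alpha V_k$: working from $X_k$ via the coarse estimate~\eqref{eq_th_ass1} only gives a factor $\alpha(1+\alpha)$, which is too lossy, whereas the cancellation of linear terms gives the sharp $\alpha(1-\alpha)$ that makes the concluding identity hold with equality rather than mere inequality.
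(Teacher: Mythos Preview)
Your proof is correct and lands on exactly the paper's intermediate bound~\eqref{eq_th_phi1}, namely $\varphi_k(\alpha V_k)-\varphi_k(0)\le\tfrac{\alpha(\alpha-2)}{2}\|V_k\|_{\mathbb{H}_k}^2+L_hM_2(2-\alpha)\alpha\|V_k\|^2$; from there the two arguments are identical. The paper reaches~\eqref{eq_th_phi1} in a single pass: it substitutes the optimality relation for $g_k$ directly into $\varphi_k(\alpha V_k)-\varphi_k(0)$, invokes a subgradient inequality for $\eta\in\partial h(\mathbf{R}_{X_k}(V_k))$ to produce the term $\langle\eta,\mathbf{R}_{X_k}(\alpha V_k)-X_k-\mathbb{D}\mathbf{R}_{X_k}(V_k)[\alpha V_k]\rangle$, and then bounds this via the split $[\mathbf{R}_{X_k}(\alpha V_k)-X_k-\mathbb{D}\mathbf{R}_{X_k}(\alpha V_k)[\alpha V_k]]+[\mathbb{D}\mathbf{R}_{X_k}(\alpha V_k)-\mathbb{D}\mathbf{R}_{X_k}(V_k)][\alpha V_k]$ together with~\eqref{eq_th_ass3}--\eqref{eq_th_ass4}. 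Your two-step decomposition is a genuine variant: Step~1 applies convexity only at the anchor $\mathbf{R}_{X_k}(V_k)$ against the single comparison point $X_k$ (where the subgradient inequality is unambiguous), and Step~2 supplies the $\alpha$-dependence through the interpolation estimate $\|\mathbf{R}_{X_k}(\alpha V_k)-(1-\alpha)X_k-\alpha\mathbf{R}_{X_k}(V_k)\|\le M_2\alpha(1-\alpha)\|V_k\|^2$ followed by the Lipschitz/convex bound $h(\mathbf{R}_{X_k}(\alpha V_k))-h(X_k)\le\alpha[h(\mathbf{R}_{X_k}(V_k))-h(X_k)]+L_hM_2\alpha(1-\alpha)\|V_k\|^2$. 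What your route buys is that each convexity invocation is made at a point where the subgradient actually lives, so every step is self-evidently justified; what the paper's route buys is brevity, collapsing your Steps~1--2 into one chain.
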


\begin{proof}
By $V_k\in\arg\min_{V\in\mathrm{T}_{X_k}\mathcal{M}}\varphi_k(V)$, we have
\begin{equation}
\label{eq_th_sub_opt}
0\in g_k
+\mathbb{H}_k[V_k]
+\mathrm{Proj}_{\mathrm{T}_{X_k}\mathcal{M}}\mathbb{D}\mathbf{R}_{X_k}(V_k)^*[\partial h(\mathbf{R}_{X_k}(V_k))].
\end{equation}
Then for all $\alpha\in[0,1]$, by definition of $\varphi_k$ and \eqref{eq_th_sub_opt}, we have
\begin{eqnarray}
\nonumber
&&
\varphi_{k}(\alpha V_k)-\varphi_{k}(0)
=
\langle g_k, \alpha V_k\rangle
+\frac{1}{2}\langle\mathbb{H}_k[ \alpha V_k], \alpha V_k\rangle
+h(\mathbf{R}_{X_k}( \alpha V_k))
-h(X_k)
\\
\nonumber
&=&
\frac{1}{2}(\alpha-2)\alpha\langle\mathbb{H}_k[V_k], V_k\rangle
-\langle\mathrm{Proj}_{\mathrm{T}_{X_k}\mathcal{M}}\mathbb{D}\mathbf{R}_{X_k}(V_k)^*[\eta],\alpha V_k\rangle
+h(\mathbf{R}_{X_k}(\alpha V_k))-h(X_k)
\\
\nonumber
&\leq&
\frac{1}{2}(\alpha-2)\alpha\|V_k\|_{\mathbb{H}_k}^2
+\langle \eta, \mathbf{R}_{X_k}(\alpha V_k)-X_k-\mathbb{D}\mathbf{R}_{X_k}(V_k)[\alpha V_k]\rangle
\\
\nonumber
&\leq&
\frac{1}{2}(\alpha-2)\alpha\|V_k\|_{\mathbb{H}_k}^2
+L_h\cdot(M_2\alpha^2\|V_k\|^2
+\|\mathbb{D}\mathbf{R}_{X_k}(\alpha V_k)-\mathbb{D}\mathbf{R}_{X_k}(V_k)\|\cdot\alpha\|V_k\|)
\\
\label{eq_th_phi1}
&\leq&
\frac{1}{2}(\alpha-2)\alpha
\|V_k\|_{\mathbb{H}_k}^2
+L_hM_2(2-\alpha)\alpha\|V_k\|^2,
\end{eqnarray}
where $\eta\in\partial h(\mathbf{R}_{X_k}(V_k))$,
the first inequality follows from the convexity of $h$,
the second inequality follows from \eqref{eq_th_ass3}
and the last inequality \eqref{eq_th_phi1} follows from \eqref{eq_th_ass4}.

By steps 3--6 of Algorithm \ref{algo2}, we can obtain that
$
\lambda_{\min}(\mathbb{H}_k)
=\lambda_{\min}(H_k)+\sigma_k>\kappa_3+\sigma_k/2
=3L_hM_2+\sigma_k/2
$.
Substituting it into \eqref{eq_th_phi1} yields
\begin{eqnarray}
\nonumber
\varphi_{k}(\alpha V_k)-\varphi_{k}(0)
&\leq&
\frac{1}{2}(\alpha-2)\alpha
(3L_hM_2+\frac{1}{2}\sigma_k)
\|V_k\|^2
-L_hM_2(\alpha-2)\alpha\|V_k\|^2
\\
\nonumber
&=&
\frac{1}{2}(L_hM_2+\frac{1}{2}\sigma_k)(\alpha-2)\alpha\|V_k\|^2.
\end{eqnarray}
Then \eqref{eq_th_1} holds.
\end{proof}

\vskip 2mm

In the following result, 
we show that sufficient reduction of $F$ can be achieved along $V_k$ when the regularization parameter $\sigma_k$ is sufficiently large.

\vskip 2mm

\begin{lemma}
\label{lm_th_sigma_lb}
Suppose Assumption \ref{ass:th1} holds.
If
\begin{equation}
\label{eq_th_sigma_lb}
\sigma_k\geq\widetilde{\sigma}:=
\frac{4c_1-2\kappa_3-2(1-\eta_1)L_hM_2}{2-\eta_1},
\end{equation}
then $\rho_k\geq\eta_1$,
where $c_1:=\varrho M_2+\frac{1}{2}LM_1^2$ is given as in \eqref{c2},
$\kappa_3=3L_hM_2$ and $\eta_1\in(0,1)$ are parameters of Algorithm \ref{algo2}.
\end{lemma}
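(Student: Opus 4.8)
The plan is to mimic the structure of the proof of Lemma 4.2 (the analogous statement for ARPQN), but using the sufficient–decrease inequality for $\varphi_k$ from Lemma 4.5 in place of Lemma 4.1, and using the second-order retraction bound \eqref{eq_th_ass1} instead of \eqref{eq6}. First I would bound $F(\mathbf{R}_{X_k}(\alpha_k V_k))$ from above. Since $\nabla f$ is Lipschitz with constant $L$, applying the descent lemma along the curve $t\mapsto \mathbf{R}_{X_k}(tV_k)$ together with \eqref{eq5} and \eqref{eq_th_ass1} gives, exactly as in \eqref{eq34_1},
\[
f(\mathbf{R}_{X_k}(\alpha_k V_k))
\leq
f(X_k)+\langle\nabla f(X_k),\alpha_k V_k\rangle+c_1\|\alpha_k V_k\|^2 .
\]
For the nonsmooth part, I would \emph{not} introduce an extra $L_hM_2\|\alpha_kV_k\|^2$ term as in \eqref{eq34_1_h}; instead I would write $h(\mathbf{R}_{X_k}(\alpha_k V_k))$ directly, since $\varphi_k$ already contains $h(\mathbf{R}_{X_k}(\cdot))$. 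Adding the two bounds and recalling $\varphi_k(\alpha_k V_k)=\langle g_k,\alpha_kV_k\rangle+\tfrac12\|\alpha_kV_k\|_{\mathbb H_k}^2+h(\mathbf{R}_{X_k}(\alpha_kV_k))$ and that $\langle\nabla f(X_k),\alpha_kV_k\rangle=\langle g_k,\alpha_kV_k\rangle$ (because $V_k\in\mathrm{T}_{X_k}\mathcal M$), I get
\[
F(\mathbf{R}_{X_k}(\alpha_k V_k))
\leq
f(X_k)+\varphi_k(\alpha_k V_k)-\tfrac12\|\alpha_kV_k\|_{\mathbb H_k}^2+c_1\|\alpha_kV_k\|^2 ,
\]
hence, using $\lambda_{\min}(\mathbb H_k)>\kappa_3+\sigma_k/2$ (from steps 3–6 of Algorithm 2, as already noted in the proof of Lemma 4.5) so that $\|\alpha_kV_k\|_{\mathbb H_k}^2\geq(\kappa_3+\sigma_k/2)\|\alpha_kV_k\|^2$,
\[
F(\mathbf{R}_{X_k}(\alpha_k V_k))-F(X_k)
\leq
\varphi_k(\alpha_k V_k)-\varphi_k(0)
+\big(c_1-\tfrac12\kappa_3-\tfrac14\sigma_k\big)\|\alpha_kV_k\|^2 .
\]

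Next I would estimate $1-\rho_k$. From the displayed inequality,
\[
1-\rho_k
=\frac{F(\mathbf{R}_{X_k}(\alpha_k V_k))-F(X_k)-\big(\varphi_k(\alpha_kV_k)-\varphi_k(0)\big)}{-\big(\varphi_k(\alpha_kV_k)-\varphi_k(0)\big)}
\leq
\frac{\big(c_1-\tfrac12\kappa_3-\tfrac14\sigma_k\big)\|\alpha_kV_k\|^2}{-\big(\varphi_k(\alpha_kV_k)-\varphi_k(0)\big)} .
\]
Now I would invoke Lemma 4.5, which gives $-(\varphi_k(\alpha_kV_k)-\varphi_k(0))\geq\tfrac12(L_hM_2+\tfrac12\sigma_k)(2-\alpha_k)\alpha_k\|V_k\|^2$, so that
\[
1-\rho_k
\leq
\frac{\big(c_1-\tfrac12\kappa_3-\tfrac14\sigma_k\big)\alpha_k}{\tfrac12(L_hM_2+\tfrac12\sigma_k)(2-\alpha_k)}
\leq
\frac{2c_1-\kappa_3-\tfrac12\sigma_k}{L_hM_2+\tfrac12\sigma_k},
\]
where the last step uses $\alpha_k\in(0,1]$ so that $\alpha_k/(2-\alpha_k)\leq 1$ (and the numerator may be assumed nonnegative, else the bound is trivial). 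It then remains to verify that the right-hand side is $\leq 1-\eta_1$ whenever $\sigma_k\geq\widetilde\sigma$. This is a one-line algebraic rearrangement: $2c_1-\kappa_3-\tfrac12\sigma_k\leq(1-\eta_1)(L_hM_2+\tfrac12\sigma_k)$ is equivalent to $\tfrac12\sigma_k(2-\eta_1)\geq 2c_1-\kappa_3-(1-\eta_1)L_hM_2$, i.e. $\sigma_k\geq\widetilde\sigma$ with $\widetilde\sigma$ exactly as in \eqref{eq_th_sigma_lb}. Thus $\rho_k\geq\eta_1$, as claimed.

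The main obstacle, such as it is, is bookkeeping rather than anything deep: one must be careful that the $h$–term is handled by absorbing it into $\varphi_k$ (rather than bounding it against $h(X_k+\alpha_kV_k)$ as in the ARPQN proof, where the subproblem used $h(X_k+V)$), and one must correctly track the factor $\tfrac14\sigma_k$ coming from $\tfrac12\lambda_{\min}(\mathbb H_k)\geq\tfrac12\kappa_3+\tfrac14\sigma_k$ together with the factor $\tfrac14\sigma_k$ appearing inside $L_hM_2+\tfrac12\sigma_k$ in Lemma 4.5's bound, so that the final threshold matches \eqref{eq_th_sigma_lb}. A minor point to check is that $V_k\neq 0$ (guaranteed by (A.4) via (B.1)), so that the denominator $-(\varphi_k(\alpha_kV_k)-\varphi_k(0))$ is strictly positive and $\rho_k$ is well-defined; positivity follows from Lemma 4.5 since $(\alpha-2)\alpha<0$ on $(0,1]$ and $L_hM_2+\tfrac12\sigma_k>0$.
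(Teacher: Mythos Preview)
Your proposal is correct and follows essentially the same approach as the paper: bound $F(\mathbf{R}_{X_k}(\alpha_kV_k))$ via the descent lemma \eqref{eq34_1} while keeping $h(\mathbf{R}_{X_k}(\alpha_kV_k))$ intact so it is absorbed into $\varphi_k$, use $\lambda_{\min}(\mathbb{H}_k)>\kappa_3+\sigma_k/2$ from steps~3--6, and then combine with Lemma~\ref{lm_th_phi} to estimate $1-\rho_k$ before the final algebraic rearrangement yielding $\widetilde\sigma$. The only cosmetic difference is that the paper writes the final fraction as $\tfrac{4c_1-\sigma_k-2\kappa_3}{2L_hM_2+\sigma_k}$ rather than your equivalent $\tfrac{2c_1-\kappa_3-\tfrac12\sigma_k}{L_hM_2+\tfrac12\sigma_k}$.
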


\begin{proof}
Since $\nabla f$ is Lipschitz continuous with Lipschitz constant $L$, by \eqref{eq34_1}, for any $\alpha\in(0,1]$, we have
\begin{eqnarray}
\nonumber
F(\mathbf{R}_{X_k}(\alpha V_k))
&\leq&
f(X_k)+\langle\nabla f(X_k),\alpha V_k\rangle
+c_1\|\alpha V_k\|^2
+h(\mathbf{R}_{X_k}(\alpha V_k))
\\
\label{eq_th_lm_sigma1}
&=&
F(X_k)
+\varphi_k(\alpha V_k)-\varphi_k(0)
+c_1\|\alpha V_k\|^2
-\frac{1}{2}\langle\alpha V_k,\mathbb{H}_k[\alpha V_k]\rangle
\\
\label{eq_th_lm_sigma2}
&<&
F(X_k)
+\varphi_k(\alpha V_k)-\varphi_k(0)
+(c_1-\frac{1}{2}(\kappa_3+\frac{1}{2}\sigma_k))\|\alpha V_k\|^2,
\end{eqnarray}
where \eqref{eq_th_lm_sigma2} follows from steps 3--6 of Algorithm \ref{algo2}.

Assume \eqref{eq_th_sigma_lb} holds. Substituting \eqref{lm_th_phi} and \eqref{eq_th_lm_sigma2} into the definition of $\rho_k$ yields
\begin{eqnarray}
\nonumber
1-\rho_k
&=&
\frac{F(\mathbf{R}_{X_k}(\alpha_k V_k))-F(X_k)-\varphi_k(\alpha_k V_k)+\varphi_k(0)}{-\varphi_k(\alpha_k V_k)+\varphi_k(0)}
\leq
\frac{(c_1-\frac{1}{2}\kappa_3-\frac{1}{4}\sigma_k)\|\alpha_k V_k\|^2}{\frac{1}{2}(L_hM_2+\frac{1}{2}\sigma_k)(2-\alpha_k)\alpha_k\| V_k\|^2}
\\
\nonumber
&=&
\frac{(4c_1-\sigma_k-2\kappa_3)\alpha_k}{(2L_hM_2+\sigma_k)(2-\alpha_k)}
\leq
\frac{4c_1-\sigma_k-2\kappa_3}{2L_hM_2+\sigma_k}
\leq
1-\eta_1,
\end{eqnarray}
that is $\rho_k\geq\eta_1$, which completes the proof.
\end{proof}

\vskip 2mm

Since $f$ is twice continuously differentiable and $\mathcal{M}$ is compact, we can define
\begin{equation}
\label{eq_th_tilde_lambda}
\widetilde{\lambda}_k:=
\lambda_{\min}(\mathrm{Hess}f(X_k)),
~\widetilde{\lambda}:=
\inf_{k\geq 0}\widetilde{\lambda}_k.
\end{equation}
The following result shows that the regularization parameter $\sigma_k$ has an upper bound.

\vskip 2mm

\begin{lemma}
\label{lm_th_sigma_ub}
Suppose Assumption \ref{ass:th1} holds.
Then,
\begin{equation}
\label{eq_th_sigma_ub}
\sigma_k
\leq
\max\{\sigma_0,\gamma_2\widetilde{\sigma},2\gamma_2(\kappa_3-\widetilde{\lambda})\},
~\forall~k\geq0,
\end{equation}
where $\widetilde{\sigma}$ is defined in \eqref{eq_th_sigma_lb}, $\widetilde{\lambda}$ is given in \eqref{eq_th_tilde_lambda},
$\sigma_0,~\gamma_2$ and $\kappa_3$ are parameters of Algorithm \ref{algo2}.
\end{lemma}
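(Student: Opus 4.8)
The plan is to argue by induction on $k$, exactly as in the proof of Lemma~\ref{lm:43}, but now keeping track of the two mechanisms that can inflate $\sigma_k$: the initial ``while'' loop (steps~4--6) that enforces $\lambda_{\min}(H_k)+\tfrac12\sigma_k>\kappa_3$, and the inner loop (steps~8--17, which invoke steps~15--22 of Algorithm~\ref{alg:ada_manpqn}) that enlarges $\sigma_k$ by a factor $\gamma_2$ whenever $\rho_k<\eta_1$. The base case $k=0$ is immediate since $\sigma_0$ appears in the bound. For the inductive step, assume \eqref{eq_th_sigma_ub} holds at iteration $k$; note that at the start of iteration $k$ the value of $\sigma_k$ is $\sigma_{k}=\sigma_{k-1}$ (step~18 of Algorithm~\ref{algo2}, or $\sigma_0$ when $k=0$), so it already satisfies the bound.

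First I would analyze the effect of the preliminary ``while'' loop. Before it, $\sigma_k$ satisfies the bound; the loop halts as soon as $\lambda_{\min}(H_k)+\tfrac12\sigma_k>\kappa_3$, i.e. as soon as $\sigma_k>2(\kappa_3-\widetilde\lambda_k)\ge 2(\kappa_3-\widetilde\lambda)$ would already guarantee termination (using \eqref{eq_th_tilde_lambda} and $H_k=\mathrm{Hess}f(X_k)$). Hence either the loop never executes, leaving $\sigma_k$ unchanged, or it executes and the first time the condition fails the previous value was $\le 2(\kappa_3-\widetilde\lambda)$, so after one more multiplication by $\gamma_2$ we get $\sigma_k\le 2\gamma_2(\kappa_3-\widetilde\lambda)$. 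Combining with the inductive hypothesis, after step~6 we have $\sigma_k\le\max\{\sigma_0,\gamma_2\widetilde\sigma,2\gamma_2(\kappa_3-\widetilde\lambda)\}$.

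Next I would analyze the inner loop of steps~8--17. By Lemma~\ref{lm_th_sigma_lb}, once $\sigma_k\ge\widetilde\sigma$ we have $\rho_k\ge\eta_1$, so the loop stops without further increase. Therefore the only way $\sigma_k$ can be multiplied by $\gamma_2$ inside this loop is when its current value is $<\widetilde\sigma$; after such a multiplication it is $<\gamma_2\widetilde\sigma$, and no subsequent increase occurs because the loop will then terminate. In every case the value of $\sigma_k$ when step~18 is reached is bounded by $\max\{\sigma_0,\gamma_2\widetilde\sigma,2\gamma_2(\kappa_3-\widetilde\lambda)\}$, and $\sigma_{k+1}=\sigma_k$ (or $\sigma_{k+1}=\gamma_1\sigma_k<\sigma_k$ when $\rho_k\ge\eta_2$), completing the induction.

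The main obstacle is bookkeeping rather than depth: one must verify that \emph{both} while-loops are entered with a value already obeying the bound, that each loop exits after at most one ``overshoot'' step past the relevant threshold ($2(\kappa_3-\widetilde\lambda)$ for the first, $\widetilde\sigma$ for the second), and that the two loops cannot compound (the first loop only raises $\sigma_k$ toward the threshold $2(\kappa_3-\widetilde\lambda)$, after which Lemma~\ref{lm_th_sigma_lb} may still require reaching $\widetilde\sigma$ in the second loop, but the $\gamma_2$ factor is applied afresh in each loop to a value below its own threshold). Care is also needed because $\widetilde\sigma$ in \eqref{eq_th_sigma_lb} could be negative; if so the term $\gamma_2\widetilde\sigma$ in the max is harmless and Lemma~\ref{lm_th_sigma_lb} gives $\rho_k\ge\eta_1$ for every $\sigma_k>0$, so the inner loop never increases $\sigma_k$ at all, and the bound reduces to $\max\{\sigma_0,2\gamma_2(\kappa_3-\widetilde\lambda)\}$, still covered by \eqref{eq_th_sigma_ub}.
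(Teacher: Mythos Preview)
Your proposal is correct and follows essentially the same induction-plus-threshold-overshoot argument as the paper. The only difference is cosmetic: the paper merges the two mechanisms into a single threshold $\max\{\widetilde\sigma,\,2(\kappa_3-\widetilde\lambda)\}$ and splits into just two cases ($\sigma_j$ below or at/above it), whereas you treat the two while-loops separately with their own thresholds; both arrive at the same bound for the same reason. One wording issue: ``after such a multiplication \ldots\ no subsequent increase occurs because the loop will then terminate'' is not literally true (several multiplications may occur before $\sigma_k$ crosses $\widetilde\sigma$), but your later ``overshoot'' paragraph shows you mean the \emph{last} multiplication, which is the correct argument.
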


\begin{proof}
The proof is by induction.
We can see that \eqref{eq_th_sigma_ub} holds trivially for $k=0$.
Assuming now that it is true for some $k=j$, we show that it holds for $k=j+1$.
We consider the cases of $\sigma_j<\max\{\widetilde{\sigma},2(\kappa_3-\widetilde{\lambda})\}$ and $\sigma_j\geq\max\{\widetilde{\sigma},2(\kappa_3-\widetilde{\lambda})\}$ separately:
(1) For the former case,
by Lemma \ref{lm_th_sigma_lb}, steps 4--6 and step 16 of Algorithm \ref{algo2}, we have $\sigma_{j+1}<\gamma_2\max\{\widetilde{\sigma},2(\kappa_3-\widetilde{\lambda})\}$.
(2) For the case $\sigma_j\geq
\max\{\widetilde{\sigma},2(\kappa_3-\widetilde{\lambda})\}$,
from Lemma \ref{lm_th_sigma_lb}, it holds that $\rho_j\geq\eta_1$.
By steps 4-6 and step 16 of Algorithm \ref{algo2}, we have
$\sigma_{j+1}
\leq
\sigma_j
\leq
\max\{\sigma_0,
\gamma_2\widetilde{\sigma},
2\gamma_2(\kappa_3-\widetilde{\lambda})\}$.
Thus, \eqref{eq_th_sigma_ub} holds for $k=j+1$ as well.
The proof is complete.
\end{proof}

\vskip 2mm

The following theorem shows that $\alpha_k$ has a uniformly lower bound and establish the global convergence of Algorithm \ref{algo2}.

\vskip 2mm

\begin{theorem}
\label{lm_th_alpha_lb}
Suppose Assumption \ref{ass:th1} holds. Then the following statements hold:
\begin{enumerate}
\item[\rm(i)] $\alpha_k\geq\gamma\widetilde{\alpha}$, where
\begin{equation}
\label{eq:tilde_alpha}
\widetilde{\alpha}:=\min\{1,\frac{(2-3\sigma)\kappa_3}{6c_1}\},
\end{equation}
where $c_1$ is defined in \eqref{c2},
$\gamma\in(0,1),~\kappa_3=3L_hM_2,~\sigma\in(0,1/4)$ are parameters of Algorithm \ref{algo2}.
\item[\rm(ii)] The backtracking line search procedure will terminate in finite steps,
that is,
\begin{equation}
\label{eq_th_ls_dec}
F(X_{k+1})-F(X_k)
\leq
-\frac{1}{2}\sigma\alpha_k\|V_k\|_{\mathbb{H}_k}^2
\leq
-\frac{1}{2}\sigma\gamma\widetilde{\alpha}\|V_k\|_{\mathbb{H}_k}^2,~\forall~k\geq0.
\end{equation}
\item[\rm(iii)] We have $\lim_{k\to\infty}\|V_k\|=0$ and all accumulation points of $\{X_k\}$ are stationary points of problem \eqref{eq:prob}.
\end{enumerate}
\end{theorem}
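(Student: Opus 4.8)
The plan is to mirror the structure of the proof of Theorem \ref{thm:32_1}, using the estimate \eqref{eq_th_lm_sigma2} together with the sufficient-decrease bound \eqref{eq_th_1} from Lemma \ref{lm_th_phi}. For part (i), I would fix any $\alpha\in(0,1]$ and start from \eqref{eq_th_lm_sigma2}, which gives
\[
F(\mathbf{R}_{X_k}(\alpha V_k))
<
F(X_k)+\varphi_k(\alpha V_k)-\varphi_k(0)
+\bigl(c_1-\tfrac12(\kappa_3+\tfrac12\sigma_k)\bigr)\alpha^2\|V_k\|^2 .
\]
I would then invoke \eqref{eq_th_1} to bound $\varphi_k(\alpha V_k)-\varphi_k(0)\le\frac12(L_hM_2+\frac12\sigma_k)(\alpha-2)\alpha\|V_k\|^2$, and also use the lower bound $\|V_k\|_{\mathbb{H}_k}^2\ge(\kappa_3+\tfrac12\sigma_k)\|V_k\|^2$ coming from steps 3--6 of Algorithm \ref{algo2} (i.e.\ $\lambda_{\min}(\mathbb{H}_k)>\kappa_3+\sigma_k/2$). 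Combining these, the terms involving $\sigma_k$ should be arranged to cancel or to help, leaving a bound of the form $F(\mathbf{R}_{X_k}(\alpha V_k))\le F(X_k)+\bigl(\text{something}\le-\tfrac12\sigma\alpha\|V_k\|_{\mathbb{H}_k}^2\bigr)$ whenever $\alpha\le\widetilde\alpha$. The threshold $\widetilde\alpha=\min\{1,(2-3\sigma)\kappa_3/(6c_1)\}$ should drop out of requiring $c_1\alpha\le\frac{(2-3\sigma)}{2}\kappa_3$ after using $\alpha<1$ to simplify $(2-\alpha)\ge1$; note $\sigma<1/4$ guarantees $2-3\sigma>0$, so $\widetilde\alpha>0$. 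Then steps 10--12 of Algorithm \ref{algo2} force $\alpha_k\ge\gamma\widetilde\alpha$.

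For part (ii), termination of the backtracking loop is immediate from part (i): since the line-search test \eqref{eq:line_search}-analogue (step 11 of Algorithm \ref{algo2}) holds for every $\alpha\in(0,\widetilde\alpha]$, the loop stops after finitely many halvings, and the accepted stepsize satisfies $\alpha_k\ge\gamma\widetilde\alpha$. Because $X_{k+1}=Z_k=\mathbf{R}_{X_k}(\alpha_kV_k)$ is accepted only when $\rho_k\ge\eta_1$ — and Lemma \ref{lm_th_sigma_lb} together with Lemma \ref{lm_th_sigma_ub} guarantees the inner $\sigma_k$-loop also terminates, so the outer iteration is well defined — the line-search inequality passes to $X_{k+1}$, giving $F(X_{k+1})-F(X_k)\le-\frac12\sigma\alpha_k\|V_k\|_{\mathbb{H}_k}^2\le-\frac12\sigma\gamma\widetilde\alpha\|V_k\|_{\mathbb{H}_k}^2$, which is \eqref{eq_th_ls_dec}.

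For part (iii), I would telescope \eqref{eq_th_ls_dec}: since $F$ is bounded below on the compact manifold $\mathcal{M}$ by $F^*$, summing gives $\sum_k\|V_k\|_{\mathbb{H}_k}^2<\infty$, and since $\|V_k\|_{\mathbb{H}_k}^2\ge(\kappa_3+\tfrac12\sigma_k)\|V_k\|^2\ge\kappa_3\|V_k\|^2$ (or, using Lemma \ref{lm_th_sigma_ub}, $\|\cdot\|_{\mathbb{H}_k}^2\ge\kappa_3\|\cdot\|^2$ uniformly), we conclude $\lim_{k\to\infty}\|V_k\|=0$. Finally, for an accumulation point $X^*$ of $\{X_k\}$, I would pass to the limit in the first-order optimality condition \eqref{eq_th_sub_opt} of the subproblem; using $\mathbb{D}\mathbf{R}_{X_k}(V_k)^*\to\mathbf{id}$ as $V_k\to0$ (by (B.3)), the boundedness of $\sigma_k$ and of $H_k$ (continuity plus compactness), upper semicontinuity of $\partial h$, and $\mathbf{R}_{X_k}(V_k)\to X^*$, the limit yields $0\in\mathrm{grad}f(X^*)+\mathrm{Proj}_{\mathrm{T}_{X^*}\mathcal{M}}\partial h(X^*)$, i.e.\ $X^*$ satisfies \eqref{eq:2_opt}.

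The main obstacle I anticipate is the bookkeeping in part (i): one must be careful that the $\sigma_k$-dependent terms combine so that the bound is uniform in $\sigma_k$ (the regularization should only help, never hurt, the descent), and that the switch from $\|V_k\|^2$ to $\|V_k\|_{\mathbb{H}_k}^2$ on the right-hand side is done via the correct one-sided inequality; getting the constant $(2-3\sigma)\kappa_3/(6c_1)$ exactly right, rather than merely some positive constant, requires tracking the factor $\kappa_3+\tfrac12\sigma_k$ versus $3L_hM_2=\kappa_3$ carefully. A secondary subtlety in part (iii) is justifying that $\partial h(\mathbf{R}_{X_k}(V_k))$ stays bounded so that the limit of the subgradient term exists — but this follows from (A.2), since $h$ is globally Lipschitz with constant $L_h$.
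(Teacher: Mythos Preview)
Your plan for parts (ii) and (iii) is fine and essentially matches the paper (the paper telescopes via $\rho_k\ge\eta_1$ and \eqref{eq_th_1} rather than directly via \eqref{eq_th_ls_dec}, but either works). The problem is in part (i), and it is exactly the obstacle you flag at the end --- but with your chosen starting inequalities it is a genuine gap, not just bookkeeping.

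You propose to start from \eqref{eq_th_lm_sigma2} and \eqref{eq_th_1}. Both of these have \emph{already} replaced $\|V_k\|_{\mathbb{H}_k}^2$ by the lower bound $(\kappa_3+\tfrac12\sigma_k)\|V_k\|^2$; combining them gives an upper bound on $F(\mathbf{R}_{X_k}(\alpha V_k))-F(X_k)$ purely in terms of $\|V_k\|^2$. But the line-search test in step~11 reads $F(\mathbf{R}_{X_k}(\alpha V_k))\le F(X_k)-\tfrac12\sigma\alpha\|V_k\|_{\mathbb{H}_k}^2$, and converting a negative multiple of $\|V_k\|^2$ into a negative multiple of $\|V_k\|_{\mathbb{H}_k}^2$ requires an \emph{upper} bound on $\lambda_{\max}(\mathbb{H}_k)$, not the lower bound you have. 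Such an upper bound is available (compactness plus Lemma~\ref{lm_th_sigma_ub}), but then the resulting threshold on $\alpha$ depends on $\lambda_{\max}(\mathbb{H}_k)$ and cannot equal the stated $\widetilde\alpha=\min\{1,(2-3\sigma)\kappa_3/(6c_1)\}$.

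The fix is to go back one step in each lemma and use the sharper estimates that still carry $\|V_k\|_{\mathbb{H}_k}^2$ explicitly: take \eqref{eq_th_lm_sigma1} instead of \eqref{eq_th_lm_sigma2}, and \eqref{eq_th_phi1} instead of \eqref{eq_th_1}. Adding them, the $\mathbb{H}_k$-terms combine to $-\alpha\|V_k\|_{\mathbb{H}_k}^2$, leaving
\[
F(\mathbf{R}_{X_k}(\alpha V_k))-F(X_k)
\le c_1\alpha^2\|V_k\|^2 + 2L_hM_2\alpha\|V_k\|^2 - \alpha\|V_k\|_{\mathbb{H}_k}^2
\le c_1\alpha^2\|V_k\|^2 - \tfrac13\alpha\|V_k\|_{\mathbb{H}_k}^2,
\]
the second inequality using $\|V_k\|_{\mathbb{H}_k}^2\ge 3L_hM_2\|V_k\|^2$. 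Now the lower bound $\kappa_3\|V_k\|^2\le\|V_k\|_{\mathbb{H}_k}^2$ is applied only to the \emph{positive} term $c_1\alpha^2\|V_k\|^2$, which is the correct direction, and for $\alpha\le\widetilde\alpha$ one gets $c_1\alpha^2\|V_k\|^2\le\tfrac{2-3\sigma}{6}\alpha\|V_k\|_{\mathbb{H}_k}^2$, hence the line-search condition with the exact constant.
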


\begin{proof}
$\rm(i)$. By $\sigma\in(0,1/4)$ and \eqref{eq:tilde_alpha}, we can see that $\widetilde{\alpha}\in(0,1]$.
Combining \eqref{eq_th_phi1} and \eqref{eq_th_lm_sigma1} yields that for all $0<\alpha\leq1$, it holds
\begin{eqnarray}
\nonumber
F(\mathbf{R}_{X_k}(\alpha V_k))
&\leq&
F(X_k)
+c_1\alpha^2\| V_k\|^2
-\alpha(2L_hM_2\|V_k\|^2-\|V_k\|_{\mathbb{H}_k}^2)
\\
\label{eq:thm41_alpha1}
&\leq&
F(X_k)
+c_1\alpha^2\| V_k\|^2
-\frac{1}{3}\alpha\|V_k\|_{\mathbb{H}_k}^2,
\end{eqnarray}
where $c_1:=\varrho M_2+\frac{1}{2}LM_1^2$ is defined in \eqref{c2} and the second inequality follows from $\lambda_{\min}(\mathbb{H}_k)
>3L_hM_2+\sigma_k/2$ (see steps 3--6 of Algorithm \ref{algo2}).
If $0<\alpha\leq\widetilde{\alpha}$, by \eqref{eq:thm41_alpha1}, it holds that
\[
F(\mathbf{R}_{X_k}(\alpha V_k))
-F(X_k)
\leq
\frac{2-3\sigma}{6}\kappa_3\alpha \| V_k\|^2
-\frac{1}{3}\alpha\|V_k\|_{\mathbb{H}_k}^2
\leq
-\frac{1}{2}\sigma\alpha\|V_k\|_{\mathbb{H}_k}^2.
\]
Using the above inequality, by steps 11--13 of Algorithm \ref{algo2}, we have $\alpha_k\geq\gamma\widetilde{\alpha}$ for all $k\geq0$.

$\rm(ii)$. The assertion follows immediately from $\rm(i)$.

$\rm(iii)$.
Since $X_k$ is updated only when $\rho_k\geq\eta_1$,
by \eqref{eq:th_new_rhok}, \eqref{eq:tilde_alpha} and steps 3--6 of Algorithm \ref{algo2}, we have
\begin{eqnarray}
\nonumber
&&F(X_0)-F(X_{k+1})
=
\sum_{j=0}^k
(F(X_j)-F(\mathbf{R}_{X_j}(\alpha_jV_j))
\geq
\sum_{j=0}^k
\eta_1(\varphi_j(0)-\varphi_j(\alpha_jV_j))
\\
\label{eq:3_new_f_decrease}
&\geq&
\sum_{j=0}^k
\eta_1\frac{\alpha_i(2-\alpha_j)}{2}\langle\mathbb{H}_j[V_j],V_j\rangle
\geq
\frac{3}{2}\gamma\widetilde{\alpha}(2-\gamma\widetilde{\alpha})\eta_1L_hM_2
\sum_{j=0}^k\|V_j\|^2,
\end{eqnarray}
where the last inequality uses the fact $\gamma\widetilde{\alpha}\leq\alpha_k\leq1$.
By \eqref{eq_th_ls_dec}, $\{F(X_k)\}_k$ is a non-increasing sequence.
Since $\mathcal{M}$ is compact, $\{F(X_k)\}_k$ is bounded from below.
Thus, $\{F(X_k)\}_k$ is convergent, which together with \eqref{eq:3_new_f_decrease} implies $\lim_{k\to\infty}\|V_k\|=0$.
Let $X^*$ be an accumulation point of sequence $\{X_k\}$.
By \eqref{eq_th_sub_opt}, (B.3) of Assumption \ref{ass:th1} and $\lim_{k\to\infty}\|V_k\|=0$, we know that $X^*$ satisfies \eqref{eq:2_opt}. The proof is complete.
\end{proof}

\subsection{Local Superlinear Convergence for the Case of $H_k=\mathrm{Hess}f(X_k)$}

In this subsection, we establish superlinear convergence of Algorithm \ref{algo2} under Assumption \ref{ass:2}.
To establish our main convergence results, we do some preparatory work.

\vskip 2mm

\begin{lemma}
(\cite[Theorem 4.2]{manpqn2023})
\label{lm_th_accum}
Suppose Assumptions \ref{ass:2} and \ref{ass:th1} hold, and $X^*$ is the accumulation point
satisfying \eqref{eq:4_hess}.
Then, $X_k$ converges to $X^*$.
\end{lemma}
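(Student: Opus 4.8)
The statement to prove is Lemma~\ref{lm_th_accum}: under Assumptions~\ref{ass:2} and~\ref{ass:th1}, with $X^*$ the accumulation point satisfying \eqref{eq:4_hess}, the whole sequence $X_k$ converges to $X^*$. Since this is quoted as \cite[Theorem 4.2]{manpqn2023}, the plan is to adapt that argument to the ARPN iteration rather than invent something new. The backbone is the standard Kurdyka--\L ojasiewicz / capture-type argument for nonmonotone-free descent methods: $\{F(X_k)\}$ is non-increasing (by \eqref{eq_th_ls_dec}), so it converges; the sufficient-decrease inequality \eqref{eq:3_new_f_decrease} gives $\sum_k\|V_k\|^2<\infty$, hence $\|V_k\|\to0$ and $\|X_{k+1}-X_k\|\le M_1\alpha_k\|V_k\|\to0$ (using \eqref{eq5}); and $X^*$ is an accumulation point that is stationary by Theorem~\ref{lm_th_alpha_lb}(iii).

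\textbf{Key steps.} First I would fix a small neighborhood of $X^*$ on which the local error-bound / strong-convexity consequence of Assumption~\ref{ass:2} holds: because $\lambda_{\min}(\mathrm{Hess}(f\circ\mathbf{R}_{X^*})(0_{X^*}))\ge\delta>5L_hM_2>0$ and $f,\mathbf{R}$ are twice differentiable with continuous Hessian (B.4), by continuity there is $r>0$ and $\delta'>0$ such that $f\circ\mathbf{R}_X$ is strongly convex with modulus $\ge\delta'$ on a ball of radius $r$ in $\mathrm{T}_X\mathcal{M}$ for every $X$ in the $r$-neighborhood of $X^*$. This strong convexity, combined with the stationarity of $X^*$, yields a local ``proximal error bound'': there is $c>0$ with $\mathrm{dist}(X_k,X^*)\le c\|V_k\|$ whenever $X_k$ is close to $X^*$ — this is exactly the ingredient that upgrades ``$X^*$ is an accumulation point'' to ``$X_k\to X^*$''. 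Second, I would combine the error bound with the telescoped sufficient decrease: from \eqref{eq_th_ls_dec}, $F(X_k)-F(X^*)\le F(X_k)-F(X_{k+1})+\big(F(X_{k+1})-F(X^*)\big)$ can be controlled, and more usefully $\|X_{k+1}-X_k\|\lesssim\|V_k\|\lesssim\sqrt{F(X_k)-F(X_{k+1})}\cdot(\text{const})$, with the constant uniform because $\|V_k\|_{\mathbb{H}_k}^2\ge(\kappa_3+\sigma_k/2)\|V_k\|^2$ and $\sigma_k$ is bounded by Lemma~\ref{lm_th_sigma_ub}. Summing a geometric-type bound on $\sqrt{F(X_k)-F(X^*)}$ (obtained from the error bound plus a one-step decrease estimate that shows $F(X_{k+1})-F(X^*)\le q\,(F(X_k)-F(X^*))$ for some $q<1$ once the iterates are near $X^*$, or alternatively a direct \L ojasiewicz-exponent-$1/2$ estimate) gives $\sum_k\|X_{k+1}-X_k\|<\infty$. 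Third, a finite path length plus the existence of the accumulation point $X^*$ forces $X_k\to X^*$.

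\textbf{Alternative packaging.} If one prefers to avoid re-deriving the error bound from scratch, the cleanest route is to check literally that every hypothesis of \cite[Theorem 4.2]{manpqn2023} is met by the ARPN iterates: (a) $\{F(X_k)\}$ non-increasing with uniform sufficient decrease $F(X_k)-F(X_{k+1})\ge\mu\|V_k\|^2$ for a fixed $\mu>0$ — this is \eqref{eq_th_ls_dec} together with $\lambda_{\min}(\mathbb{H}_k)>3L_hM_2$ and $\alpha_k\ge\gamma\widetilde\alpha$; (b) a ``safeguard'' relating the step to the optimality residual, i.e. the subproblem optimality \eqref{eq_th_sub_opt} shows $\mathrm{dist}\big(0,\mathrm{grad}f(X_k)+\mathrm{Proj}_{\mathrm{T}_{X_k}\mathcal{M}}\partial h(X_k)\big)\le C\|V_k\|$ using boundedness of $\mathbb{H}_k$, the retraction estimates \eqref{eq_th_ass1}, (B.3), Lipschitz continuity of $h$ and of $\nabla f$, and compactness of $\mathcal{M}$; and (c) the local strong convexity \eqref{eq:4_hess} with $\delta>5L_hM_2$, which is used exactly as in \cite{manpqn2023} to produce the local error bound. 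With (a)--(c) verified, invoke \cite[Theorem 4.2]{manpqn2023} verbatim to conclude $X_k\to X^*$. I would present the proof in this second form, spelling out (a) and (b) and citing \cite{manpqn2023} for the error-bound machinery in (c).

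\textbf{Main obstacle.} The delicate point is (b): establishing that the ARPN search direction $V_k$ controls the true Riemannian optimality residual with a \emph{uniform} constant. The subproblem uses $h(\mathbf{R}_{X_k}(V))$, not $h(X_k+V)$, so the stationarity condition \eqref{eq_th_sub_opt} carries the extra factor $\mathbb{D}\mathbf{R}_{X_k}(V_k)^*$; one must show this factor is close to the identity when $\|V_k\|$ is small (from $\mathbb{D}\mathbf{R}_{X}(0_X)=\mathbf{id}$ in (B.3) and the Lipschitz bound \eqref{eq_th_ass4}) and that the resulting error terms are $O(\|V_k\|^2)$, so that the linearization of the optimality system around $X_k$ is valid with constants independent of $k$ — this is where compactness of $\mathcal{M}$ and the uniform bound on $\sigma_k$ (Lemma~\ref{lm_th_sigma_ub}) do the heavy lifting. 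Everything else is bookkeeping on top of results already proved above.
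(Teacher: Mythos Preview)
The paper gives no proof of this lemma at all: it is stated with the attribution ``\cite[Theorem 4.2]{manpqn2023}'' and then used immediately, so the paper's ``proof'' is a bare citation. Your proposal, in particular the alternative packaging that checks (a) uniform sufficient decrease from \eqref{eq_th_ls_dec} and Theorem~\ref{lm_th_alpha_lb}, (b) a residual bound from \eqref{eq_th_sub_opt}, and (c) the local strong-convexity hypothesis \eqref{eq:4_hess}, and then invokes the cited theorem, is exactly how one would justify that the result from \cite{manpqn2023} (proved there for a different subproblem using $h(X_k+V)$) transfers to the ARPN iteration with $h(\mathbf{R}_{X_k}(V))$. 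In that sense you are doing strictly more than the paper does, and your identification of (b) as the only nontrivial verification --- handled via $\mathbb{D}\mathbf{R}_{X_k}(0_{X_k})=\mathbf{id}$, \eqref{eq_th_ass4}, and the uniform bound on $\sigma_k$ from Lemma~\ref{lm_th_sigma_ub} --- is correct and is the right place to spend the effort.
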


By \eqref{eq:4_hess} and Assumption \ref{ass:th1},
there exists a neighbourhood $\mathcal{U}_{X^*}$ of $X^*$ such that $\lambda_{\min}({\rm Hess} (f\circ{\bf R}_{X})(0_X))\geq4\delta/5$
for all $X\in\mathcal{U}_{X^*}$.
By Lemma \ref{lm_th_accum}, there exists $K_1\geq0$ such that $X_k\in \mathcal{U}_{X^*}$ for all $k\geq K_1$.
Since $\mathbf{R}_{X_k}(\xi)$ is a second-order retraction, we have ${\rm Hess} (f\circ{\bf R}_{X_k})(0_{X_k})=\mathrm{Hess}f(X_k)$. Thus,
\begin{equation}\label{eq:4_hess_neigh}
\lambda_{\min}({\rm Hess} (f\circ{\bf R}_{X_k})(0_{X_k}))=\lambda_{\min}(\mathrm{Hess}f(X_k))
\geq
\frac{4}{5}\delta,\quad\forall~k\geq K_1.
\end{equation}

\iffalse
\begin{lemma}
\label{lm_th_conv}
Suppose Assumptions \ref{ass:2} and \ref{ass:th1} hold.
Then there exist $\epsilon_1>0$ and a neighbourhood $\mathcal{U}_{X^*}$ of $X^*$ such that for all $X\in\mathcal{U}_{X^*}$ and for all $\xi\in\mathcal{D}_{X}:=\{\xi\in\mathrm{T}_X\mathcal{M}: ~X\in\mathcal{U}_{X^*},~\|\xi\|\leq\epsilon_1\}$, it holds that
\end{lemma}
\fi

The following result tells us that $\alpha_k=1$ can be accepted in steps 11--13 of Algorithm \ref{algo2} for all sufficiently large $k$.

\begin{lemma}
\label{lm_th_alpha}
Suppose Assumptions \ref{ass:2} and \ref{ass:th1} hold.
Then there exists $\widetilde{K}\geq 0$ such that
\begin{equation}
\label{eq_th_ls}
F(\mathbf{R}_{X_k}(V_k))
\leq
F(X_k)-\frac{1}{2}\sigma\|V_k\|^2,
~\forall~k\geq \widetilde{K},
\end{equation}
which implies $X_{k+1}=\mathbf{R}_{X_k}(V_k)$.
Moreover, $\rho_k\geq\eta_2$ for all $k\geq \widetilde{K}$, and therefore $\sigma_k\to0$.
\end{lemma}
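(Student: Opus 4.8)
The plan is to localize the analysis near the limit point and replace the crude Lipschitz bound used so far (the one with constant $c_1$) by a sharp second-order Taylor expansion of $f$ along the second-order retraction; this is the feature that was unavailable while $\sigma_k$ was kept large. First I would invoke Lemma \ref{lm_th_accum} to get $X_k\to X^*$ and Theorem \ref{lm_th_alpha_lb}(iii) to get $\|V_k\|\to0$, and recall \eqref{eq:4_hess_neigh}: $\lambda_{\min}(H_k)=\lambda_{\min}(\mathrm{Hess}f(X_k))\ge\tfrac45\delta$ for all $k\ge K_1$. Since $\delta>5L_hM_2$, this gives $\lambda_{\min}(H_k)>4L_hM_2>\kappa_3$, so for $k\ge K_1$ the safeguard loop (steps 4--6 of Algorithm \ref{algo2}) is never entered; hence $\mathbb{H}_k=H_k+\sigma_k I$ with $\sigma_k$ inherited from iteration $k-1$, and $\lambda_{\min}(\mathbb{H}_k)\ge\tfrac45\delta$.

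The key estimate is a uniform second-order expansion: using (B.2) (so that $\mathrm{Hess}(f\circ\mathbf{R}_X)(0_X)=\mathrm{Hess}f(X)$) together with (B.4) and compactness of $\mathcal{M}$, there is a function $\omega:[0,\infty)\to[0,\infty)$ with $\omega(s)\to0$ as $s\to0^+$ such that
\[
f(\mathbf{R}_{X_k}(V_k))=f(X_k)+\langle g_k,V_k\rangle+\tfrac12\langle H_k[V_k],V_k\rangle+E_k,\qquad |E_k|\le\omega(\|V_k\|)\,\|V_k\|^2 .
\]
Adding $h(\mathbf{R}_{X_k}(V_k))$ and using $\mathbb{H}_k=H_k+\sigma_k I$ yields the identity $F(\mathbf{R}_{X_k}(V_k))-F(X_k)=\big(\varphi_k(V_k)-\varphi_k(0)\big)-\tfrac12\sigma_k\|V_k\|^2+E_k$. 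On the other hand, the intermediate bound \eqref{eq_th_phi1} in the proof of Lemma \ref{lm_th_phi}, taken at $\alpha=1$, gives $D_k:=\varphi_k(0)-\varphi_k(V_k)\ge\tfrac12\|V_k\|_{\mathbb{H}_k}^2-L_hM_2\|V_k\|^2$, and combined with $\lambda_{\min}(\mathbb{H}_k)\ge\tfrac45\delta$ and $L_hM_2<\tfrac15\delta$ this gives $D_k\ge\tfrac15\delta\|V_k\|^2>0$ for $k\ge K_1$.

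With these two ingredients the conclusion is essentially bookkeeping. For the unit stepsize, the identity gives $F(\mathbf{R}_{X_k}(V_k))-F(X_k)\le-\tfrac12\|V_k\|_{\mathbb{H}_k}^2+L_hM_2\|V_k\|^2+|E_k|$; since $\sigma\in(0,1/4)$ one has $(\tfrac12-\tfrac12\sigma)\|V_k\|_{\mathbb{H}_k}^2\ge\tfrac{3}{10}\delta\|V_k\|^2>L_hM_2\|V_k\|^2+\tfrac1{10}\delta\|V_k\|^2$, so choosing $\widetilde K\ge K_1$ with $\omega(\|V_k\|)<\tfrac1{10}\delta$ for $k\ge\widetilde K$ forces $F(\mathbf{R}_{X_k}(V_k))\le F(X_k)-\tfrac12\sigma\|V_k\|_{\mathbb{H}_k}^2$, i.e. the line search in steps 11--13 of Algorithm \ref{algo2} accepts $\alpha_k=1$ and $X_{k+1}=\mathbf{R}_{X_k}(V_k)$. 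For the ratio, the same identity gives $\rho_k=1+\dfrac{\tfrac12\sigma_k\|V_k\|^2-E_k}{D_k}\ge1-\dfrac{|E_k|}{D_k}\ge1-\dfrac{5\,\omega(\|V_k\|)}{\delta}$, so after possibly enlarging $\widetilde K$ so that $5\omega(\|V_k\|)/\delta<1-\eta_2$ we get $\rho_k\ge\eta_2$ for all $k\ge\widetilde K$. Finally, for $k\ge\widetilde K$ the safeguard loop is inactive and $\rho_k\ge\eta_2$, so step 17 of Algorithm \ref{alg:ada_manpqn} (invoked at step 15 of Algorithm \ref{algo2}) multiplies $\sigma_k$ by $\gamma_1\in(0,1)$ at every iteration; hence $\sigma_k\le\gamma_1^{\,k-\widetilde K}\sigma_{\widetilde K}\to0$.

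The main obstacle is making the displayed second-order estimate precise with a modulus $\omega$ that is uniform over the whole sequence: the earlier Lipschitz-type bound producing $c_1\|V_k\|^2$ is now useless, because $\sigma_k\to0$ leaves nothing to absorb a term of order $\|V_k\|^2$ with an arbitrary constant, so one must exploit that $\mathbf{R}$ is a second-order retraction (making the first- and second-order coefficients of $f\circ\mathbf{R}_{X_k}$ at $0$ exactly $g_k$ and $\mathrm{Hess}f(X_k)$) and that the relevant second derivatives are continuous on the compact manifold $\mathcal{M}$.
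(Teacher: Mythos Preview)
Your proposal is correct and follows essentially the same approach as the paper's proof: both replace the crude Lipschitz bound by a second-order Taylor expansion of $f\circ\mathbf{R}_{X_k}$ at $0_{X_k}$ (using that $\mathbf{R}$ is a second-order retraction, so the Hessian there is $\mathrm{Hess}f(X_k)$), control the remainder via continuity of $\mathrm{Hess}(f\circ\mathbf{R}_X)(V)$ in $(X,V)$, combine this with the intermediate estimate \eqref{eq_th_phi1} at $\alpha=1$, and use \eqref{eq:4_hess_neigh} to absorb the $L_hM_2\|V_k\|^2$ term. The only cosmetic differences are that the paper phrases the remainder bound via a fixed $\epsilon>0$ (their $r_k(V_k)<\epsilon$) rather than your modulus $\omega$, and for the $\rho_k$ bound the paper lower-bounds $D_k$ using \eqref{eq_th_1} (giving $D_k\ge\tfrac12(L_hM_2+\tfrac12\sigma_k)\|V_k\|^2$) whereas you use \eqref{eq_th_phi1} directly (giving the sharper $D_k\ge\tfrac15\delta\|V_k\|^2$); both suffice.
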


\begin{proof}
For $k\geq 0$, let $R_k(V):=f(\mathbf{R}_{X_k}(V))-[f(X_k)+\langle g_k,V\rangle+\frac{1}{2}\langle V,\mathrm{Hess}f(X_k)[V]\rangle]$,
where $V\in\mathrm{T}_{X_k}\mathcal{M}$.
Then, we have
\begin{eqnarray}
\label{eq_th_ass_hess_lip}
R_k(V)\leq\frac{1}{2}\underbrace{\max_{0\leq t\leq1}\|\mathrm{Hess}(f\circ\mathbf{R}_{X_k})(tV)-\mathrm{Hess}f(X_k)\|}_{r_k(V)}\cdot\|V\|^2.
\end{eqnarray}
Pick any $\epsilon>0$.
Since $\mathrm{Hess}f(\mathbf{R}_{X}(V))$ is continuous with respect to $(X,V)$ (see Assumption \ref{ass:th1}),
taking into account $X_k$ converges to $X^*$ and $V_k\to0$,
we know that there exists $K_2\geq0$ such that $r_k(V_k)<\epsilon$ for all $k\geq K_2$.
Then, we can deduce that
\begin{eqnarray}
\nonumber
F(\mathbf{R}_{X_{k}}(V_k))-F(X_k)
%&=&f(\mathbf{R}_{X_{k}}(V_k))+h(\mathbf{R}_{X_{k}}(V_k))-f(X_k)-h(X_k)\\ \nonumber
&=&\langle g_k, V_k\rangle
+\frac{1}{2}\langle V_k, \mathrm{Hess}f(X_k)[V_k]\rangle
+R_k(V_k)+h(\mathbf{R}_{X_{k}}(V_k))-h(X_k)
\\
\label{wangwang}
&\leq&
\varphi_{k}(V_k)-\varphi_{k}(0)
-\frac{1}{2}(\sigma_k-r_k(V_k))\|V_k\|^2
\\
\label{eq_th_alpha1}
&\leq&
-\frac{1}{2}\|V_k\|_{\mathbb{H}_k}^2
+\frac{1}{2}(2L_hM_2+\epsilon-\sigma_k)\|V_k\|^2,
\end{eqnarray}
where \eqref{eq_th_alpha1} follows from \eqref{eq_th_phi1}
and the fact $r_k(V_k)<\epsilon$ for all $k\geq K_2$.
By \eqref{eq:4_hess_neigh}, we have
$\|V\|^2_{\mathbb{H}_k}
\geq
\frac{4}{5}\delta\|V\|^2
\geq
4L_hM_2\|V\|^2$
for all $V\in\mathrm{T}_{X_k}\mathcal{M}$.
Combining it with \eqref{eq_th_alpha1} gives
\begin{equation}
\nonumber
F(\mathbf{R}_{X_{k}}(V_k))-F(X_k)
\leq
-\frac{1}{6}\|V_k\|_{\mathbb{H}_k}^2
+\frac{1}{2}(\epsilon-\sigma_k)\|V_k\|^2.
\end{equation}
Without loss of generality, assume that $\epsilon<\min\{1/3,1-\eta_2\}\cdot L_hM_2$.
Using the above inequality and taking into account $\sigma\in(0,1/4)$, we can obtain that $F(\mathbf{R}_{X_{k}}(V_k))-F(X_k)\leq-\frac{1}{2}\sigma\|V_k\|_{\mathbb{H}_k}^2$
for all $k\geq K_2$.
Then, the line search condition \eqref{eq_th_ls_dec} can always be satisfied with $\alpha_k=1$ when $k\geq K_2$.

By \eqref{eq:4_hess_neigh}, we know that
\[
\lambda_{\min}(\mathbb{H}_k)-\sigma_k/2
=\lambda_{\min}(\mathrm{Hess}f(X_k))+\sigma_k/2
%=\lambda_{\min}(\mathrm{Hess}(f\circ\mathbf{R}_{X_k}(0_{X_k}))+\sigma_k/2
>\kappa_3
=3L_hM_2,\quad\forall k\geq K_1.
\]
Thus, $\sigma_k$ will not increase during the steps 4--6 of Algorithm \ref{algo2}.

Let $\widetilde{K}:=\max\{K_1,K_2\}$.
By \eqref{wangwang} and \eqref{eq_th_1}, taking into account $\alpha_k=1$ for all $k\geq K_2$, we have
\[
1-\rho_k
=
\frac{F(\mathbf{R}_{X_k}(V_k))-F(X_k)-\varphi_k(V_k)+\varphi_k(0)}{\varphi_k(0)-\varphi_k(V_k)}
\leq
\frac{-\sigma_k+\epsilon}{L_hM_2+\sigma_k}
\leq1-\eta_2,\quad\forall k\geq \widetilde{K},
\]
which implies $\rho_k\geq\eta_2$.
From the procedures of Algorithm \ref{algo2}, we can obtain that $\sigma_k\to0$.
The proof is complete.
\end{proof}

\vskip 2mm

In the following, we prove an important result which will be used in several places.
Given $X\in\mathcal{M}$, define a function $\varphi$ on $\mathrm{T}_{X}\mathcal{M}$ by $\varphi(V):=\langle g,V\rangle
+\frac{1}{2}\langle H[V],V\rangle+h(\mathbf{R}_{X}(V))$,
where $g\in\mathrm{T}_{X}\mathcal{M}$ and $H$ is a linear operator on $\mathrm{T}_{X}\mathcal{M}$.

\vskip 2mm

\begin{lemma}
\label{lm_th_unique}
If $\lambda_{\min}(H)\geq 4L_hM_2$, then the optimal solution of $\min_{V\in\mathrm{T}_X\mathcal{M}}\varphi(V)$ is unique.
\end{lemma}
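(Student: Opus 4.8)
The plan is to exploit the fact that, although $h\circ\mathbf{R}_X$ need not be convex, the quadratic term $\tfrac12\langle H[\cdot],\cdot\rangle$ is strongly convex with modulus $\lambda_{\min}(H)$, and this modulus is large enough to absorb the second-order defect of the retraction. Concretely, I would prove that for all $V_1,V_2\in\mathrm{T}_X\mathcal{M}$ and all $t\in[0,1]$, writing $V_t:=(1-t)V_1+tV_2$,
\begin{equation}
\nonumber
\varphi(V_t)\leq(1-t)\varphi(V_1)+t\varphi(V_2)-t(1-t)\Big(\tfrac12\lambda_{\min}(H)-L_hM_2\Big)\|V_1-V_2\|^2 .
\end{equation}
Once this inequality is available, uniqueness is immediate: a minimizer exists because $\varphi$ is continuous and coercive on $\mathrm{T}_X\mathcal{M}$ (the positive definite quadratic dominates the Lipschitz term $h\circ\mathbf{R}_X$); and if $V_1\neq V_2$ were two minimizers with common optimal value $m$, then taking $t=\tfrac12$ gives $m\leq\varphi(V_{1/2})\leq m-\tfrac14\big(\tfrac12\lambda_{\min}(H)-L_hM_2\big)\|V_1-V_2\|^2$, so that $\big(\tfrac12\lambda_{\min}(H)-L_hM_2\big)\|V_1-V_2\|^2\leq0$. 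Since $\lambda_{\min}(H)\geq4L_hM_2$ forces $\tfrac12\lambda_{\min}(H)-L_hM_2\geq L_hM_2>0$, this yields $\|V_1-V_2\|=0$, a contradiction.

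To prove the displayed inequality I would split $\varphi=q+h\circ\mathbf{R}_X$ with $q(V):=\langle g,V\rangle+\tfrac12\langle H[V],V\rangle$. For $q$ there is the exact identity $(1-t)q(V_1)+tq(V_2)-q(V_t)=\tfrac12 t(1-t)\langle H[V_1-V_2],V_1-V_2\rangle$ (the linear part cancels), which is bounded below by $\tfrac12 t(1-t)\lambda_{\min}(H)\|V_1-V_2\|^2$ using self-adjointness and positive definiteness of $H$. The nonsmooth part is where the retraction geometry enters: applying \eqref{eq_th_ass3} with base point $V_t$ and increments $V_1$ and $V_2$, then taking the convex combination, the first-order terms $\mathbb{D}\mathbf{R}_{X}(V_t)\big[(1-t)(V_1-V_t)+t(V_2-V_t)\big]$ cancel because $(1-t)(V_1-V_t)+t(V_2-V_t)=0$, leaving only the quadratic remainders:
\begin{equation}
\nonumber
\big\|(1-t)\mathbf{R}_X(V_1)+t\mathbf{R}_X(V_2)-\mathbf{R}_X(V_t)\big\|
\leq M_2\big((1-t)\|V_1-V_t\|^2+t\|V_2-V_t\|^2\big)=M_2\,t(1-t)\|V_1-V_2\|^2 ,
\end{equation}
where the last equality uses $\|V_1-V_t\|=t\|V_1-V_2\|$, $\|V_2-V_t\|=(1-t)\|V_1-V_2\|$ and $(1-t)t^2+t(1-t)^2=t(1-t)$. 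Combining this with the convexity of $h$ on $\mathbb{R}^{n\times r}$ applied to the points $\mathbf{R}_X(V_1),\mathbf{R}_X(V_2)$ and the $L_h$-Lipschitz continuity of $h$ gives $h(\mathbf{R}_X(V_t))\leq(1-t)h(\mathbf{R}_X(V_1))+th(\mathbf{R}_X(V_2))+L_hM_2\,t(1-t)\|V_1-V_2\|^2$. Adding this to the identity for $q$ produces the claimed inequality.

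I expect the only delicate point to be the second-order retraction expansion: one must apply \eqref{eq_th_ass3} twice with a common base point $V_t$, notice the exact cancellation of the differential terms in the convex combination, and carefully track the quadratic remainders — after which everything collapses via the elementary identity $(1-t)t^2+t(1-t)^2=t(1-t)$. The remaining steps (the quadratic identity for $q$, the convexity/Lipschitz estimate for $h$, and the final contradiction argument at $t=\tfrac12$) are routine once the retraction estimate is in place.
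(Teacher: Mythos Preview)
Your proof is correct, but it follows a genuinely different route from the paper's. The paper fixes an arbitrary minimizer $\widetilde{V}$, extracts a subgradient $\xi\in\partial h(\mathbf{R}_X(\widetilde{V}))$ from the first-order optimality condition \eqref{diyi}, and then shows directly that $\varphi(W)-\varphi(\widetilde{V})\geq L_hM_2\|W-\widetilde{V}\|^2>0$ for every $W\neq\widetilde{V}$ by combining the subgradient inequality for $h$ with the second-order retraction bound \eqref{eq_th_ass3}. You instead prove a global strong-convexity inequality for $\varphi$ on $\mathrm{T}_X\mathcal{M}$ via the midpoint argument, never invoking optimality conditions at all. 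Your approach is arguably cleaner and yields a slightly stronger conclusion (strong convexity everywhere, with modulus $\tfrac12\lambda_{\min}(H)-L_hM_2$, so in fact the weaker hypothesis $\lambda_{\min}(H)>2L_hM_2$ already suffices); the paper's approach, by contrast, is closer in spirit to the subsequent analysis, since the KKT structure at a minimizer is precisely what is exploited in Lemma~\ref{lm_th_lip} and in the proof of Theorem~\ref{thm_superlinear}. Both are valid and of comparable length.
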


\begin{proof}
Pick any $\widetilde{V}\in\arg\min_{V\in\mathrm{T}_{X}\mathcal{M}}\varphi(V)$.
Then there exists $\xi\in\partial h(\mathbf{R}_{X}(\widetilde{V}))$ such that
\begin{eqnarray}\label{diyi}
g+H[\widetilde{V}]+\mathrm{Proj}_{\mathrm{T}_{X}\mathcal{M}}\mathbb{D}\mathbf{R}_{X}(\widetilde{V})^*[\xi]=0.
\end{eqnarray}
For any $W\in\mathrm{T}_{X}\mathcal{M}$ and $W\neq\widetilde{V}$, it holds that
\begin{eqnarray}
\nonumber
&&
\varphi(W)-\varphi(\widetilde{V})
\\
\nonumber
&=&
\langle g,W-\widetilde{V}\rangle
+\frac{1}{2}\langle H[W],W\rangle
-\frac{1}{2}\langle H[\widetilde{V}],\widetilde{V}\rangle
+h(\mathbf{R}_{X}(W))-h(\mathbf{R}_{X}(\widetilde{V}))
\\
\nonumber
&\geq&
\langle g+H[\widetilde{V}],W-\widetilde{V}\rangle
+\frac{1}{2}\langle H[W-\widetilde{V}],W-\widetilde{V}\rangle
+\langle\xi,\mathbf{R}_{X}(W)-\mathbf{R}_{X}(\widetilde{V})\rangle,
\\
\nonumber
&\geq&
\langle g+H[\widetilde{V}],W-\widetilde{V}\rangle
+2L_hM_2\|W-\widetilde{V}\|^2
+\langle\xi,\mathbb{D}\mathbf{R}_{X}(\widetilde{V})[W-\widetilde{V}]\rangle
-L_hM_2\|W-\widetilde{V}\|^2
\\
\nonumber
&=&L_hM_2\|W-\widetilde{V}\|^2>0,
\end{eqnarray}
where the first inequality follows from the convexity of $h$;
the second inequality follows from \eqref{eq_th_ass3} and $\lambda_{\min}(H)\geq 4L_hM_2$;
the second equality uses \eqref{diyi}.
Thus, the optimal solution of $\min_{V\in\mathrm{T}_{X}\mathcal{M}}\varphi(V)$ is unique.
\end{proof}

\vskip 2mm

Let $K_1$ be the integer such that \eqref{eq:4_hess_neigh} holds.
Suppose Assumption \ref{ass:2}
%Assumption \ref{ass:th1} 
hold.
Then $\lambda_{\min}(\mathbb{H}_k)\geq4L_hM_2$ for all $k\geq K_1$.
From Lemma \ref{lm_th_unique}, we know that for any $k\geq K_1$, the following problem has a unique solution:
\begin{equation}
\label{eq:th_def_prox}
\mathrm{prox}_{h\circ\mathbf{R}_{X_k}}^{\mathbb{H}_k}(V)
:=
\mathop{\arg\min}\limits_{Y\in\mathrm{T}_{X_k}\mathcal{M}}\{\frac{1}{2}\|Y-V\|_{\mathbb{H}_k}^2+h(\mathbf{R}_{X_k}(Y))\},
~\mathrm{where}~V\in\mathrm{T}_{X_k}\mathcal{M}.
\end{equation}
The solution of the above problem is called the proximal mapping of the function $h\circ\mathbf{R}_{X_k}$ on the tangent space $\mathrm{T}_{X_k}\mathcal{M}$.
From the definition of $\varphi_{k}(V)$ (see \eqref{sub_prob1}), we can know that $\min\varphi_{k}$ has a unique solution $V_k$ for all $k\geq K_1$,
and
$
V_k=\mathrm{prox}_{h\circ\mathbf{R}_{X_k}}^{\mathbb{H}_k}(-\mathbb{H}_k^{-1}g_k).
$

\vskip 2mm

\begin{lemma}
\label{lm_th_lip}
Suppose Assumptions \ref{ass:2} and \ref{ass:th1} hold.
Then there exist $\varsigma>0$ and $\epsilon\in(0,1)$ such that for all $k\geq \widetilde{K}$,
where $\widetilde{K}$ is given as in Lemma \ref{lm_th_alpha},
if $U,~W\in\mathrm{T}_{X_k}\mathcal{M}$ satisfy $\|U-W\|\leq\epsilon$ and
\begin{equation}
\label{eq_th_prox_eps}
\max\{\|\mathrm{prox}_{h\circ\mathbf{R}_{X_k}}^{\mathbb{H}_k}(U)\|,
\|\mathrm{prox}_{h\circ\mathbf{R}_{X_k}}^{\mathbb{H}_k}(W)\|\}
\leq
\epsilon/2,
\end{equation}
then
\begin{equation}
\label{eq_th_lip}
\|\mathrm{prox}_{h\circ\mathbf{R}_{X_k}}^{\mathbb{H}_k}(U)-\mathrm{prox}_{h\circ\mathbf{R}_{X_k}}^{\mathbb{H}_k}(W)\|_{\mathbb{H}_k}\leq\varsigma\|U-W\|_{\mathbb{H}_k}.
\end{equation}
\end{lemma}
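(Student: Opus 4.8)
The plan is to exploit the fact that, for $k \geq \widetilde{K}$, the operator $\mathbb{H}_k$ satisfies $\lambda_{\min}(\mathbb{H}_k) \geq 4L_h M_2$ (by $\eqref{eq:4_hess_neigh}$, since $\widetilde{K} \geq K_1$), which by Lemma~\ref{lm_th_unique} makes $\mathrm{prox}_{h\circ\mathbf{R}_{X_k}}^{\mathbb{H}_k}$ single-valued on a neighbourhood of the origin. First I would write the first-order optimality conditions for $P_U := \mathrm{prox}_{h\circ\mathbf{R}_{X_k}}^{\mathbb{H}_k}(U)$ and $P_W := \mathrm{prox}_{h\circ\mathbf{R}_{X_k}}^{\mathbb{H}_k}(W)$: there exist $\xi_U \in \partial h(\mathbf{R}_{X_k}(P_U))$ and $\xi_W \in \partial h(\mathbf{R}_{X_k}(P_W))$ with
\[
\mathbb{H}_k[P_U - U] + \mathrm{Proj}_{\mathrm{T}_{X_k}\mathcal{M}}\mathbb{D}\mathbf{R}_{X_k}(P_U)^*[\xi_U] = 0,
\]
and analogously for $W$. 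Subtracting these two identities and pairing with $P_U - P_W$ gives
\[
\|P_U - P_W\|_{\mathbb{H}_k}^2 = \langle \mathbb{H}_k[U-W], P_U - P_W\rangle - \langle \mathrm{Proj}_{\mathrm{T}_{X_k}\mathcal{M}}\mathbb{D}\mathbf{R}_{X_k}(P_U)^*[\xi_U] - \mathrm{Proj}_{\mathrm{T}_{X_k}\mathcal{M}}\mathbb{D}\mathbf{R}_{X_k}(P_W)^*[\xi_W], P_U - P_W\rangle.
\]

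The first term on the right is bounded by $\|U-W\|_{\mathbb{H}_k}\,\|P_U-P_W\|_{\mathbb{H}_k}$ by Cauchy--Schwarz in the $\mathbb{H}_k$-inner product. The crux is to show the second (subgradient) term is bounded below by something of order $-c\|P_U-P_W\|^2$ with $c$ strictly smaller than $\lambda_{\min}(\mathbb{H}_k)$, so that it can be absorbed into the left-hand side. To do this I would split
\[
\langle \mathbb{D}\mathbf{R}_{X_k}(P_U)^*[\xi_U] - \mathbb{D}\mathbf{R}_{X_k}(P_W)^*[\xi_W], P_U - P_W\rangle
= \langle \xi_U - \xi_W, \mathbb{D}\mathbf{R}_{X_k}(P_W)[P_U - P_W]\rangle + \langle \xi_U, (\mathbb{D}\mathbf{R}_{X_k}(P_U) - \mathbb{D}\mathbf{R}_{X_k}(P_W))[P_U-P_W]\rangle,
\]
where the projection can be dropped because $P_U - P_W \in \mathrm{T}_{X_k}\mathcal{M}$. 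For the first piece, monotonicity of $\partial h$ (convexity of $h$) together with the estimate $\eqref{eq_th_ass3}$, which controls $\mathbf{R}_{X_k}(P_U) - \mathbf{R}_{X_k}(P_W) - \mathbb{D}\mathbf{R}_{X_k}(P_W)[P_U - P_W]$ by $M_2\|P_U - P_W\|^2$, yields $\langle \xi_U - \xi_W, \mathbb{D}\mathbf{R}_{X_k}(P_W)[P_U-P_W]\rangle \geq -2 L_h M_2 \|P_U - P_W\|^2$ (using $\|\xi_U - \xi_W\| \leq 2L_h$ from (A.2)). For the second piece, $\eqref{eq_th_ass4}$ gives $\|\mathbb{D}\mathbf{R}_{X_k}(P_U) - \mathbb{D}\mathbf{R}_{X_k}(P_W)\| \leq 2M_2\|P_U - P_W\|$, hence this term is at least $-2 L_h M_2 \|P_U - P_W\|^2$ as well, so the subgradient term is bounded below by $-4L_h M_2 \|P_U - P_W\|^2 \geq -\lambda_{\min}(\mathbb{H}_k)\|P_U - P_W\|^2 = -\|P_U - P_W\|_{\mathbb{H}_k}^2$ — but that is too weak. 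The remedy is to use the smallness hypothesis $\eqref{eq_th_prox_eps}$: since $\|P_U\|, \|P_W\| \leq \epsilon/2$, the factors $\|P_U - P_W\|$ appearing in the quadratic error estimates are themselves bounded by $\epsilon$, so these error terms are in fact bounded below by $-4L_h M_2 \epsilon \|P_U - P_W\|$, i.e.\ they are lower-order and, after choosing $\epsilon$ small enough (relative to the spectral gap $\lambda_{\min}(\mathbb{H}_k) - $ nothing, since the leading $-2L_hM_2\|P_U-P_W\|^2$ term from the monotonicity step is the real competitor), I get $\|P_U - P_W\|_{\mathbb{H}_k}^2 \leq \|U-W\|_{\mathbb{H}_k}\|P_U-P_W\|_{\mathbb{H}_k} + (\tfrac12 + O(\epsilon))\|P_U - P_W\|_{\mathbb{H}_k}^2$, using $2L_h M_2\|P_U-P_W\|^2 \leq \tfrac12\|P_U-P_W\|_{\mathbb{H}_k}^2$.

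Rearranging gives $\|P_U - P_W\|_{\mathbb{H}_k} \leq \varsigma \|U - W\|_{\mathbb{H}_k}$ with $\varsigma = (\tfrac12 - O(\epsilon))^{-1}$, a constant independent of $k$ (it depends only on $L_h$, $M_2$, $\delta$ through the uniform lower bound $\lambda_{\min}(\mathbb{H}_k) \geq \tfrac45\delta$, and the chosen $\epsilon$); this is valid for all $k \geq \widetilde{K}$. The main obstacle, as indicated above, is that the naive bound from the second-order retraction errors is exactly of the same order as the quadratic term one wants to absorb, so the argument genuinely needs the localization hypothesis $\eqref{eq_th_prox_eps}$ to demote those retraction-curvature terms to lower order; I would spend the most care making that absorption step quantitative and checking that a single $\epsilon$ works simultaneously for the $\|U-W\|\leq\epsilon$ restriction and the $\|\cdot\|\leq\epsilon/2$ range restriction. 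A secondary technical point is verifying that $\xi_U, \xi_W$ can be chosen to lie in the respective subdifferentials with the stated optimality identity — this follows from the chain rule for $h \circ \mathbf{R}_{X_k}$ together with the regularity of $\mathbf{R}$ in (B.3)–(B.4), exactly as in $\eqref{eq_th_sub_opt}$.
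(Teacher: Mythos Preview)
Your overall strategy---subtract the two optimality conditions, pair with $P_U-P_W$, and use monotonicity of $\partial h$ together with the second-order retraction estimates \eqref{eq_th_ass3}--\eqref{eq_th_ass4}---is sound and is in fact cleaner than the paper's route. The paper instead writes the subgradients as $\eta_U=\Gamma_X(U)^{-*}[\mathbb{H}_k[U-p(U)]+\zeta_U]$ (with $\Gamma_X(\cdot)=\mathbb{D}\mathbf{R}_{X_k}(p(\cdot))$) and pairs the monotonicity inequality directly with $\mathbf{R}_{X_k}(p(U))-\mathbf{R}_{X_k}(p(W))$; this produces cubic error terms and forces one to bound $\|\Gamma_X(\cdot)^{-1}\|_{op}$ by some $\kappa_R$, which is precisely where the smallness hypothesis \eqref{eq_th_prox_eps} enters. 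Your decomposition avoids the inverse of $\mathbb{D}\mathbf{R}_{X_k}$ altogether, so none of those cubic terms appear.

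The gap in your argument is the absorption step. You obtain (correctly) that the subgradient contribution is bounded below by $-4L_hM_2\|P_U-P_W\|^2$, but then you only invoke $\lambda_{\min}(\mathbb{H}_k)\geq 4L_hM_2$, which gives a borderline coefficient $1$ and, as you note, is too weak. Your proposed remedy---using \eqref{eq_th_prox_eps} to demote the retraction-curvature terms to ``lower order''---does not work as written: replacing one factor of $\|P_U-P_W\|$ by $\epsilon$ turns $\|P_U-P_W\|^2$ into $\epsilon\|P_U-P_W\|$, which yields an \emph{additive} term rather than the $O(\epsilon)\|P_U-P_W\|_{\mathbb{H}_k}^2$ you claim, so the final bound $(\tfrac12+O(\epsilon))\|P_U-P_W\|_{\mathbb{H}_k}^2$ is not justified.

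The fix is the one you actually mention in passing at the end: use the sharper lower bound $\lambda_{\min}(\mathbb{H}_k)\geq\tfrac{4}{5}\delta$ from \eqref{eq:4_hess_neigh} together with $\delta>5L_hM_2$ from Assumption~\ref{ass:2}. Then
\[
4L_hM_2\|P_U-P_W\|^2\;\leq\;\frac{4L_hM_2}{4\delta/5}\,\|P_U-P_W\|_{\mathbb{H}_k}^2\;=\;\frac{5L_hM_2}{\delta}\,\|P_U-P_W\|_{\mathbb{H}_k}^2,
\]
with $\frac{5L_hM_2}{\delta}<1$ strictly, and your inequality rearranges to $\|P_U-P_W\|_{\mathbb{H}_k}\leq(1-5L_hM_2/\delta)^{-1}\|U-W\|_{\mathbb{H}_k}$. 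With this correction your proof is complete and---unlike the paper's---does not actually require the hypotheses $\|U-W\|\leq\epsilon$ and \eqref{eq_th_prox_eps} at all.
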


\begin{proof}
By \eqref{eq:th_def_prox} and \cite[(2.11)]{pn2014}, we know that for all $U\in\mathrm{T}_{X_k}\mathcal{M}$,
\[
\mathbb{H}_k[U-\mathrm{prox}_{h\circ\mathbf{R}_{X_k}}^{\mathbb{H}_k}(U)]
\in
\mathrm{Proj}_{\mathrm{T}_{X_k}\mathcal{M}}
\mathbb{D}\mathbf{R}_{X_k}(\mathrm{prox}_{h\circ\mathbf{R}_{X_k}}^{\mathbb{H}_k}(U))^*
[\partial h(\mathbf{R}_{X_k}(\mathrm{prox}_{h\circ\mathbf{R}_{X_k}}^{\mathbb{H}_k}(U)))].
\]
Then, there exists $\zeta_U\in\mathbb{R}^{n\times r}$ satisfying $\zeta_U\perp\mathrm{T}_{X_k}\mathcal{M}$ such that for all $U\in\mathrm{T}_{X_k}\mathcal{M}$,
\begin{equation}
\label{eq_th_zeta_partial_h}
\mathbb{H}_k[U-\mathrm{prox}_{h\circ\mathbf{R}_{X_k}}^{\mathbb{H}_k}(U)]
+\zeta_U
\in
\mathbb{D}\mathbf{R}_{X_k}(\mathrm{prox}_{h\circ\mathbf{R}_{X_k}}^{\mathbb{H}_k}(U))^*
[\partial h(\mathbf{R}_{X_k}(\mathrm{prox}_{h\circ\mathbf{R}_{X_k}}^{\mathbb{H}_k}(U)))]
.
\end{equation}

Select $\kappa_R\in(1,\sqrt{\frac{2\delta}{5L_hM_2}-1})$ arbitrarily.
Let $\widetilde{U}\in\mathbb{R}^{n\times r}$.
By $\rm(B.3)$ of Assumption \ref{ass:th1}, taking into account $\{X_k\}_{k\geq\widetilde{K}}$ is bounded,
there exists $\epsilon>0$ such that for all $k\geq\widetilde{K}$,
if $\|\widetilde{U}\|\leq\epsilon/2$ then
\begin{eqnarray}\label{zeng}
\max\big\{\|\mathbb{D}\mathbf{R}_{X_k}(\widetilde{U})\|_{op},
~\|\mathbb{D}\mathbf{R}_{X_k}(\widetilde{U})^{-1}\|_{op}\big\}\leq\kappa_R,
\end{eqnarray}
where $\|\cdot\|_{op}$ is the operator norm.
In the rest of the proof, for ease of notation, we use the notations:
$X:=X_k$,
$\mathbb{H}:=\mathbb{H}_k$,
$p(\cdot):=\mathrm{prox}_{h\circ\mathbf{R}_{X_k}}^{\mathbb{H}_k}(\cdot)$
and $\Gamma_X(\cdot):=\mathbb{D}\mathbf{R}_{X_k}(p(\cdot))$, where $k\geq\widetilde{K}$.

For any $U\in\mathrm{T}_{X}\mathcal{M}$ satisfying
$\|p(U)\|\leq\epsilon/2$, by \eqref{zeng},
it holds that
\begin{equation}
\label{eq_th_kappa_r}
\max\big\{\|\Gamma_X(U)\|_{op},
~\|\Gamma_X(U)^{-1}\|_{op}\big\}\leq\kappa_R,
\end{equation}
which together with \eqref{eq_th_zeta_partial_h} implies $\|\zeta_U\|\leq\kappa_RL_h$.
For such $U$, all $W\in\mathrm{T}_{X}\mathcal{M}$ and all $\widetilde{W}\in\mathbb{R}^{n\times r}$, we have
\begin{eqnarray}
\Upsilon&:=&\langle\Gamma_X(U)^{-*}[\widetilde{W}],
\mathbf{R}_X(p(U))-\mathbf{R}_X(p(W))\rangle\nonumber\\
&=&\langle\Gamma_X(U)^{-*}[\widetilde{W}],
\mathbf{R}_X(p(U))-\mathbf{R}_X(p(W))+\Gamma_X(U)[p(W)-p(U)]\rangle-\langle\widetilde{W},p(W)-p(U)\rangle\nonumber\\
&\leq&\kappa_R\|\widetilde{W}\|\cdot M_2\|p(U)-p(W)\|^2+\|\widetilde{W}\|\cdot\|p(U)-p(W)\|,\label{rong}
\end{eqnarray}
where \eqref{rong} uses \eqref{eq_th_ass3} and \eqref{eq_th_kappa_r}.
If $\widetilde{W}\perp\mathrm{T}_{X}\mathcal{M}$, then $\langle\widetilde{W},p(W)-p(U)\rangle=0$, and therefore
\begin{eqnarray}\label{r2}
\Upsilon\leq\kappa_R\|\widetilde{W}\|\cdot M_2\|p(U)-p(W)\|^2.
\end{eqnarray}

Since $h$ is convex, $\partial h$ is a monotone mapping.
Then for any $U,~W\in\mathrm{T}_{X}\mathcal{M},
~\eta\in\partial h(\mathbf{R}_{X}(p(U)))$
and $\eta^{\prime}\in\partial h(\mathbf{R}_{X}(p(W)))$,
it holds that
\begin{equation}
\label{eq_th_h_mono}
\langle\eta-\eta^{\prime},\mathbf{R}_X(p(U))-\mathbf{R}_X(p(W))\rangle
\geq0.
\end{equation}
Let $\eta_U:=\Gamma_X(U)^{-*}[\mathbb{H}[U-p(U)]+\zeta_U]$ and
$\eta_W:=\Gamma_X(W)^{-*}[\mathbb{H}[W-p(W)]+\zeta_W]$,
where $\zeta_U$ and $\zeta_W$ are given as in \eqref{eq_th_zeta_partial_h}.
Then we have $\eta_U\in\partial h(\mathbf{R}_{X}(p(U)))$ and $\eta_W\in\partial h(\mathbf{R}_{X}(p(W)))$.
Substituting $\eta=\eta_U$ and $\eta^{\prime}=\eta_W$ into \eqref{eq_th_h_mono} yields
\[
\langle\Gamma_X(U)^{-*}[\mathbb{H}[U-p(U)]+\zeta_U]
- \Gamma_X(W)^{-*}[\mathbb{H}[W-p(W)]+\zeta_W],
\mathbf{R}_X(p(U))-\mathbf{R}_X(p(W))\rangle
\geq
0.
\]
Then the above inequality can be reformulated as
\begin{eqnarray}
\nonumber
0
&\leq&
\langle\Gamma_X(U)^{-*}\mathbb{H}[(U-p(U))-(W-p(W))],
\mathbf{R}_X(p(U))-\mathbf{R}_X(p(W))\rangle
\\
\nonumber
&&
+\underbrace{\langle\Gamma_X(U)^{-*}(\Gamma_X(W)^{*}-\Gamma_X(U)^{*})[\eta_W],
\mathbf{R}_X(p(U))-\mathbf{R}_X(p(W))\rangle}_{\Upsilon_1}
\\
\label{eq:th_upsilon_sum}
&&
+\underbrace{\langle\Gamma_X(U)^{-*}[\zeta_U-\zeta_W],
\mathbf{R}_X(p(U))-\mathbf{R}_X(p(W))\rangle}_{\Upsilon_2}.
\end{eqnarray}
By \eqref{eq_th_ass4}, $\|\Gamma_X(W)^{*}-\Gamma_X(U)^{*}\|\leq 2M_2\|p(U)-p(W)\|$.
From \eqref{eq_th_zeta_partial_h} and $\eta_W\in\partial h(\mathbf{R}_X(p(W)))$, it holds that $\|\eta_W\|\leq L_h$.
Combining these results with \eqref{rong} and \eqref{r2}, we can deduce that
\begin{eqnarray}
\Upsilon_1&\leq&2L_hM_2\|p(U)-p(W)\|^2(1+\kappa_RM_2\|p(U)-p(W)\|),\label{eq:th_upsilon1}\\
\Upsilon_2&\leq&2L_hM_2\kappa_R^2\|p(U)-p(W))\|^2,\label{eq:th_upsilon2}
\end{eqnarray}
where \eqref{eq:th_upsilon2} uses the fact that $\|\zeta_U\|\leq\kappa_RL_h$, $\|\zeta_W\|\leq\kappa_RL_h$ and $\zeta_U,~\zeta_W\perp\mathrm{T}_{X}\mathcal{M}$.

By \eqref{eq:4_hess_neigh} and Lemma \ref{lm_th_sigma_ub},
we can see that there exists $\widetilde{\kappa}_2>0$ such that
\begin{equation}
\label{eq_th_tilde_kappa2}
\frac{4}{5}\delta\|V\|^2
\leq
\langle V, \mathbb{H}[V]\rangle
=\langle V, (\mathrm{Hess}f(X)+\sigma I)[V]\rangle
\leq\widetilde{\kappa}_2\|V\|^2,
~\forall~V\in\mathrm{T}_{X}\mathcal{M}.
%\mathcal{D}_{X_k}.
\end{equation}
Thus, by \eqref{eq_th_ass3}, \eqref{eq_th_kappa_r} and \eqref{eq_th_tilde_kappa2}, we can deduce that
\begin{small}
\begin{eqnarray}
\nonumber
&&
\|p(U)-p(W)\|_{\mathbb{H}}^2
-M_2\widetilde{\kappa}_2\kappa_R\|p(U)-p(W)\|^3
\\
\nonumber
&\leq&
\|p(U)-p(W)\|_{\mathbb{H}}^2+\langle\Gamma_X(U)^{-*}\mathbb{H}[p(U)-p(W)],
\mathbf{R}_X(p(U))-\mathbf{R}_X(p(W))-\Gamma_X(U)[p(U)-p(W)]\rangle
\\
\nonumber
&=&
\langle\Gamma_X(U)^{-*}\mathbb{H}[p(U)-p(W)],
\mathbf{R}_X(p(U))-\mathbf{R}_X(p(W))\rangle
\\
\nonumber
&\leq&
\langle\Gamma_X(U)^{-*}\mathbb{H}[U-W],
\mathbf{R}_X(p(U))-\mathbf{R}_X(p(W))\rangle
+\Upsilon_1+\Upsilon_2
\quad\quad\quad\quad\quad\quad\quad\quad\quad\quad\quad\quad\quad\quad\quad\quad\quad\quad
\mathrm{by}~\eqref{eq:th_upsilon_sum}
\\
\nonumber
&\leq&
\langle U-W,p(U)-p(W)\rangle_{\mathbb{H}}
+\langle\Gamma_X(U)^{-*}\mathbb{H}[U-W],
\mathbf{R}_X(p(U))-\Gamma_X(U)[p(U)-p(W)]-\mathbf{R}_X(p(W))\rangle
\\
\nonumber
&&
+2L_hM_2\|p(U)-p(W)\|^2
(1+\kappa_R^2+\kappa_RM_2\|p(U)-p(W)\|),
\quad\quad\quad\quad\quad\quad\quad\quad\quad\quad\quad\quad~~
\mathrm{by}~\eqref{eq:th_upsilon1}~\mathrm{and}~\eqref{eq:th_upsilon2}
\\
\nonumber
&\leq&
\langle U-W,p(U)-p(W)\rangle_{\mathbb{H}}
+M_2\widetilde{\kappa}_2\kappa_R\|U-W\|\cdot\|p(U)-p(W)\|^2
\\
\label{eq_th_super3}
&&
+2L_hM_2\|p(U)-p(W)\|^2
(1+\kappa_R^2+\kappa_RM_2\|p(U)-p(W)\|).
\end{eqnarray}
\end{small}
If $\|U-W\|\leq\epsilon$ and $\|p(U)-p(W)\|\leq\epsilon$, using \eqref{eq_th_tilde_kappa2} and \eqref{eq_th_super3}, we can obtain
\begin{eqnarray}
\nonumber
&&
\langle U-W,p(U)-p(W)\rangle_{\mathbb{H}}
\\
\nonumber
&\geq&
(1-\frac{5}{2\delta}L_hM_2(1+\kappa_R^2))\|p(U)-p(W)\|_{\mathbb{H}}^2
-2(\widetilde{\kappa}_2
+L_hM_2) M_2\kappa_R\epsilon\|p(U)-p(W)\|^2
\\
\label{eq_th_lip3}
&\geq&
\underbrace{\left(1-\frac{5}{2\delta}L_hM_2(1+\kappa_R^2)
-\frac{5}{2\delta}(\widetilde{\kappa}_2
+L_hM_2)M_2\kappa_R\epsilon\right)}_{c_3}
\|p(U)-p(W)\|_{\mathbb{H}}^2.
\end{eqnarray}
Since $\kappa_R\in(1,\sqrt{\frac{2\delta}{5L_hM_2}-1})$ and $\delta>5L_hM_2$,
we can see that if $\epsilon$ is sufficiently small then $c_3>0$.
Combining \eqref{eq_th_lip3} with the Cauchy-Schwarz inequality yields
\[
\|p(U)-p(W)\|_{\mathbb{H}}
\leq
\frac{1}{c_3}\|U-W\|_{\mathbb{H}},
~\forall~U,W\in\mathrm{T}_{X}\mathcal{M},~\mathrm{s.t.}~\|U-W\|\leq\epsilon,~\|p(U)-p(W)\|\leq\epsilon.
\]
Let $\varsigma:=1/c_3$. Then the assertion \eqref{eq_th_lip} holds.
\end{proof}

\vskip 2mm

Next we present our main results of this section.

\vskip 2mm

\begin{theorem}
\label{thm_superlinear}
Suppose Assumptions \ref{ass:2} and \ref{ass:th1} hold.
Then the sequence $\{X_k\}$ generated by Algorithm \ref{algo2} converges locally q-superlinearly to $X^*$.
\end{theorem}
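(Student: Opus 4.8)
The plan is to establish the q-superlinear rate by comparing the Newton step $V_k$ with the quantity $-\mathbb{H}_k^{-1}g_k$ passed through the proximal map, and controlling the residual by the smoothness of the data near $X^*$. By Lemma~\ref{lm_th_alpha}, for all $k\geq\widetilde{K}$ the unit step is accepted, $X_{k+1}=\mathbf{R}_{X_k}(V_k)$, and $\sigma_k\to 0$; by Lemma~\ref{lm_th_accum}, $X_k\to X^*$ and hence $V_k\to 0$. I would first record the first-order characterization $V_k=\mathrm{prox}_{h\circ\mathbf{R}_{X_k}}^{\mathbb{H}_k}(-\mathbb{H}_k^{-1}g_k)$ (valid for $k\geq K_1$), and similarly characterize the ``ideal'' step $V^*$ at $X^*$, namely the solution of the stationarity inclusion \eqref{eq:2_opt} read through the retraction. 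The key algebraic identity is that the true gradient/Hessian information at $X_k$ agrees with a second-order Taylor expansion of $F\circ\mathbf{R}_{X_k}$ up to an $o(\|V_k\|)$ relative error, because $\mathrm{Hess}f(\mathbf{R}_X(V))$ is continuous (B.4) and $\mathbf{R}$ is a second-order retraction (B.2).

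The second step is to use Lemma~\ref{lm_th_lip}: for $k\geq\widetilde{K}$ the proximal map $\mathrm{prox}_{h\circ\mathbf{R}_{X_k}}^{\mathbb{H}_k}$ is Lipschitz (with constant $\varsigma$) on a neighborhood of $0$, uniformly in $k$. I would apply this with $U=-\mathbb{H}_k^{-1}g_k$ and $W$ chosen so that $\mathrm{prox}(W)$ equals the exact step toward $X^*$ in $\mathrm{T}_{X_k}\mathcal{M}$, i.e. $W=-\mathbb{H}_k^{-1}g_k + (\text{the step }\xi_k\text{ with }\mathbf{R}_{X_k}(\xi_k)=X^*\text{, up to its stationarity residual})$. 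Then $\|V_k-\xi_k\|_{\mathbb{H}_k}\leq\varsigma\|U-W\|_{\mathbb{H}_k}$, and the right-hand side is, after substituting the inclusions for $X_k$ and $X^*$, a difference of Riemannian-gradient-type terms plus Hessian-mismatch terms. Using (A.1), (B.4), and the second-order retraction property, each such term is $O(\|\xi_k\|^2)+o(1)\cdot\|\xi_k\|$ as $k\to\infty$; together with $\sigma_k\to 0$ controlling the $\sigma_k I$ perturbation of $\mathbb{H}_k$ (via the uniform spectral bounds \eqref{eq_th_tilde_kappa2}), this yields $\|V_k-\xi_k\| = o(\|\xi_k\|)$, where $\|\xi_k\|=\mathrm{dist}(X_k,X^*)$-equivalent by Proposition~\ref{prop2_1} / \eqref{eq_th_ass1}.

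Finally, since $X_{k+1}=\mathbf{R}_{X_k}(V_k)$ and $X^*=\mathbf{R}_{X_k}(\xi_k)$, the bound \eqref{eq_th_ass3} gives $\|X_{k+1}-X^*\| = \|\mathbf{R}_{X_k}(V_k)-\mathbf{R}_{X_k}(\xi_k)\| \leq \|\mathbb{D}\mathbf{R}_{X_k}(\xi_k)[V_k-\xi_k]\| + M_2\|V_k-\xi_k\|^2 = o(\|\xi_k\|) + O(\|\xi_k\|^2) = o(\mathrm{dist}(X_k,X^*))$, which is exactly q-superlinear convergence. I expect the main obstacle to be the matching step: defining $W$ (equivalently $\xi_k$ and its stationarity residual) precisely, and showing the error $\|U-W\|_{\mathbb{H}_k}$ is genuinely $o(\|\xi_k\|)$ rather than merely $O(\|\xi_k\|)$. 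This requires carefully combining (i) continuity of $\mathrm{Hess}f$ along the retraction so that $\mathbb{H}_k$ without the $\sigma_k$ term is a good model of the true Hessian of $F\circ\mathbf{R}$ at the relevant point, (ii) $\sigma_k\to 0$ so the regularization contributes only $o(1)\|\xi_k\|$, and (iii) the uniform invertibility of $\mathbb{D}\mathbf{R}_{X_k}$ near $0$ from \eqref{zeng}. The convexity-based estimates of Lemma~\ref{lm_th_unique} and the monotonicity argument underlying Lemma~\ref{lm_th_lip} should then close the gap without further surprises.
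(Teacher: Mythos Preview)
Your proposal is correct and follows essentially the same route as the paper's proof. The paper makes the ``matching step'' you flag as the main obstacle completely explicit: it sets $\xi_k^*:=\mathbf{R}_{X_k}^{-1}(X^*)$, $g_k^*:=\mathrm{grad}(f\circ\mathbf{R}_{X_k})(\xi_k^*)$, and $W:=\xi_k^*-\mathbb{H}_k^{-1}g_k^*$, then verifies the fixed-point identity $\xi_k^*=\mathrm{prox}_{h\circ\mathbf{R}_{X_k}}^{\mathbb{H}_k}(W)$ directly from the stationarity inclusion for $X^*$ pulled back through $\mathbf{R}_{X_k}$; after that, $\|U-W\|_{\mathbb{H}_k}=\|\mathbb{H}_k^{-1}(g_k^*-g_k-\mathbb{H}_k\xi_k^*)\|_{\mathbb{H}_k}$ is bounded by $(r_k+\sigma_k)\|\xi_k^*\|$ via the Hessian-continuity remainder $r_k\to0$ and $\sigma_k\to0$, exactly as you outline.
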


\begin{proof}
Denote $\xi^*_k:=\mathbf{R}_{X_k}^{-1}(X^*)$.
By Lemma \ref{lm_th_alpha}, $X_{k+1}=\mathbf{R}_{X_k}(V_k)$ for sufficiently large $k$.
Since $X_k\to X^*$, we have $\|\xi^*_k\|\to0$ and $\|V_k\|\to0$.
From \cite[(4.28)]{manpqn2023}, we know that there exists $\varepsilon>0$ such that
\begin{equation}
\label{eq_th_super1}
(1-\varepsilon)\|V_k-\xi^*_k\|
\leq
\|X_{k+1}-X^*\|
=
\|\mathbf{R}_{X_k}(V_k)-\mathbf{R}_{X_k}(\xi^*_k)\|
\leq
(1+\varepsilon)\|V_k-\xi^*_k\|,
\end{equation}
for sufficiently large $k$.

Denote $g_k^*:=\mathrm{Proj}_{\mathrm{T}_{X_k}\mathcal{M}}\nabla(f\circ\mathbf{R}_{X_k})(\xi^*_k)=\mathrm{grad}(f\circ\mathbf{R}_{X_k})(\xi^*_k)$.
Then, it holds that
\begin{eqnarray}
\nonumber
&&\|g_k^*-g_k-\mathrm{Hess}f(X_k)[\xi^*_k]\|
\\
\nonumber
&=&
\|
\mathrm{grad}(f\circ\mathbf{R}_{X_k})(\xi^*_k)
-\mathrm{grad}(f\circ\mathbf{R}_{X_k})(0_{X_k})
-\mathrm{Hess}(f\circ\mathbf{R}_{X_k})(0_{X_k})[\xi^*_k]\|
\\
\label{eq_th_super_hess_lip}
&\leq&\underbrace{\max_{0\leq t\leq1}\|\mathrm{Hess}(f\circ\mathbf{R}_{X_k})(t\xi^*_k)-\mathrm{Hess}(f\circ\mathbf{R}_{X_k})(0_{X_k})\|}_{r_k}\cdot\|\xi^*_k\|.
\end{eqnarray}
By $X_k\to X^*$, $\|\xi^*_k\|\to0$ and Assumption \ref{ass:2}, we know that $r_k$ converges to $0$.
Let
\begin{equation}
\label{eq_th_new_sub_prob}
\varphi_k^*(V):=\langle g_k^*,V\rangle+\frac{1}{2}\langle V, \mathbb{H}_k[V]\rangle +h(\mathbf{R}_{X_k}(\xi^*_k+V)),
\quad\forall~V\in\mathrm{T}_{X_k}\mathcal{M}.
\end{equation}
From $X^*\in\arg\min_{X\in\mathcal{M}}f(X)+h(X)$, it holds that $\xi^*_k\in\arg\min_{\xi\in\mathrm{T}_{X_k}\mathcal{M}}f(\mathbf{R}_{X_k}(\xi))+h(\mathbf{R}_{X_k}(\xi))$,
which implies $0\in g_k^*+\mathrm{Proj}_{\mathrm{T}_{X_k}\mathcal{M}}(\mathbb{D}\mathbf{R}_{X_k}(\xi^*_k))^*[\partial h(\mathbf{R}_{X_k}(\xi^*_k))]$.
Using this relation, similar to the proof of Lemma \ref{lm_th_unique}, we can prove that $\varphi_k^*(V)-\varphi_k^*(0)\geq(\frac{2}{5}\delta-L_hM_2)\|V\|^2>0$ for all nonzero $V\in\mathrm{T}_{X_k}\mathcal{M}$.
Thus, $0=\arg\min_{V\in\mathrm{T}_{X_k}\mathcal{M}}\varphi_k^*(V)$, which together with \eqref{eq:th_def_prox} implies $\xi^*_k=\mathrm{prox}_{h\circ\mathbf{R}_{X_k}}^{\mathbb{H}_k}(-\mathbb{H}_k^{-1}[g_k^*]+\xi^*_k)$.

Let $\epsilon>0$ be given as in Lemma \ref{lm_th_lip}.
Let $U:=-\mathbb{H}_k^{-1}[g_k]$ and $W:=-\mathbb{H}_k^{-1}[g_k^*]+\xi^*_k$.
Then $V_k=\mathrm{prox}_{h\circ\mathbf{R}_{X_k}}^{\mathbb{H}_k}(U)$ and $\xi^*_k=\mathrm{prox}_{h\circ\mathbf{R}_{X_k}}^{\mathbb{H}_k}(W)$.
Since $X_k\to X^*$, for sufficiently large $k$, we know that $U$ and $W$ satisfy $\|U-W\|\leq\epsilon$ and \eqref{eq_th_prox_eps},
and therefore Lemma \ref{lm_th_lip} can be applied.
Then, by \eqref{eq_th_super1} and \eqref{eq_th_lip}, we have
\begin{eqnarray}
\nonumber
&&
\|X_{k+1}-X^*\|
\leq
(1+\varepsilon)\|V_k-\xi^*_k\|
\\
\nonumber
&\leq&
(1+\varepsilon)\|\mathrm{prox}_{h\circ\mathbf{R}_{X_k}}^{\mathbb{H}_k}(-\mathbb{H}_k^{-1}[g_k])
-\mathrm{prox}_{h\circ\mathbf{R}_{X_k}}^{\mathbb{H}_k}(-\mathbb{H}_k^{-1}[g_k^*]+\xi^*_k)\|
\\
\nonumber
&\leq&
\frac{(1+\varepsilon)\varsigma}{\sqrt{4\delta/5}}\|\xi^*_k+\mathbb{H}_k^{-1}[g_k-g_k^*]\|_{\mathbb{H}_k}
\leq
\frac{5(1+\varepsilon)\varsigma}{4\delta}\|g_k^*-g_k-\mathbb{H}_k[\xi^*_k]\|
\\
\label{eq_th_thm_super2}
&\leq&
\frac{5(1+\varepsilon)\varsigma}{4\delta}\big(r_k\|\xi^*_k\|+\sigma_k\|\xi^*_k\|\big)
\leq
\frac{5(1+\varepsilon)\varsigma}{4(1-\varepsilon)\delta}\big(r_k\|X_k-X^*\|+\sigma_k\|X_k-X^*\|\big)
\\
\label{eq_th_thm_super3}
&=&
o(\|X_k-X^*\|),
\end{eqnarray}
where the first inequality of \eqref{eq_th_thm_super2} follows from \eqref{eq_th_super_hess_lip} and the second inequality of \eqref{eq_th_thm_super2} follows from \cite[(4.28)]{manpqn2023};
%\eqref{eq_th_super1};
\eqref{eq_th_thm_super3} follows from the facts that $r_k\to0$ and $\sigma_k\to0$.
\end{proof}

\subsection{Local Superlinear Convergence for the Case of Quasi-Newton Approximation $H_k$}

In this subsection, we consider the case that $H_k$ in \eqref{sub_prob1} is generated by the Quasi-Newton method.
Using the damped technique introduced in \cite{damp2017,damp1978} to update $H_k$, we can ensure that there exist $\kappa_1,~\kappa_2>0$ such that
\begin{eqnarray}\label{xi}
\kappa_1\|V\|^2
\leq\|V\|_{H_{k}}^2\leq \kappa_2\|V\|^2,\quad\forall~V\in {\rm T}_{X_k}\mathcal{M},~\forall~k\geq0.
\end{eqnarray}
Let $\varphi_k(V)$ be defined by \eqref{sub_prob1},
and we select a $V_k\in\arg\min_{V\in\mathrm{T}_{X_k}\mathcal{M}}\varphi_k(V)$ as the search direction of $F$ at $X_k$.
Analogous to the proof of Theorem \ref{lm_th_alpha_lb},
we can prove that $\lim_{k\to\infty}\|V_k\|=0$, and all accumulation points of $\{X_k\}$ are stationary points of problem \eqref{eq:prob}.
%We omit the detail.
Let $X^*$ be an  accumulation point of $\{X_k\}$ such that \eqref{eq:4_hess} holds.
Then, using the same argument as that in Lemma \ref{lm_th_accum},
we can obtain that $X_k$ converges to $X^*$.

We further suppose that $H_k$ satisfies the following Dennis-Mor\'e condition (cf. \cite{nocedal2006})
\begin{equation}
\label{cond_dennis}
\lim_{k\to\infty}\frac{\|(H_k-\mathrm{Hess}f(X_k))[V_k]\|}{\|V_k\|}\to0.
\end{equation}
Then we can prove the local superlinear convergence of Algorithm \ref{algo2}.

\vskip 2mm

\begin{theorem}
Suppose Assumptions \ref{ass:2} and \ref{ass:th1} hold.
Further suppose $\{H_k\}$  satisfy \eqref{xi} and \eqref{cond_dennis}.
Then the following statements hold.
\begin{enumerate}
\item[\rm(i)]
There exists $\widehat{K}\geq0$ such that $\alpha_k=1$ can be accepted for all $k\geq \widehat{K}$, and $\lim_{k\to\infty}\sigma_k=0$;
\item[\rm(ii)]
The sequence $\{X_k\}$ generated by Algorithm \ref{algo2} converges locally q-superlinearly to $X^*$.
\end{enumerate}
\end{theorem}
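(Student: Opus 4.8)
The plan is to run the two-step scheme of the exact-Hessian case (Lemma~\ref{lm_th_alpha} for part~(i), Theorem~\ref{thm_superlinear} for part~(ii)), viewing $H_k-\mathrm{Hess}f(X_k)$ as a perturbation that \eqref{cond_dennis} makes negligible along the step $V_k$. Throughout I use the facts (established just above the theorem) that $X_k\to X^*$ and $\|V_k\|\to0$, and I write $\widehat{\mathbb{H}}_k:=\mathrm{Hess}f(X_k)+\sigma_kI$ for the ``ideal'' operator, $d_k:=(\mathrm{Hess}f(X_k)-H_k)[V_k]\in\mathrm{T}_{X_k}\mathcal{M}$, and $\epsilon_k:=\|d_k\|/\|V_k\|\to0$. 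By \eqref{eq:4_hess_neigh}, $\lambda_{\min}(\widehat{\mathbb{H}}_k)\ge\lambda_{\min}(\mathrm{Hess}f(X_k))\ge\tfrac45\delta\ge4L_hM_2$ for all large $k$, and since $\langle V_k,H_k[V_k]\rangle\ge\langle V_k,\mathrm{Hess}f(X_k)[V_k]\rangle-\epsilon_k\|V_k\|^2$ we also get $\|V_k\|_{\mathbb{H}_k}^2\ge(\tfrac45\delta-\epsilon_k)\|V_k\|^2$ eventually.

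For part~(i), I would first copy the expansion of Lemma~\ref{lm_th_alpha}, which now picks up one extra term:
\[
F(\mathbf{R}_{X_k}(V_k))-F(X_k)=\big(\varphi_k(V_k)-\varphi_k(0)\big)+\tfrac12\langle V_k,(\mathrm{Hess}f(X_k)-H_k)[V_k]\rangle-\tfrac12\sigma_k\|V_k\|^2+R_k(V_k),
\]
with $R_k(V_k)\le\tfrac12 r_k(V_k)\|V_k\|^2$ and $r_k(V_k)\to0$ by continuity of $\mathrm{Hess}(f\circ\mathbf{R}_{X_k})$ together with $V_k\to0$. The added term is bounded by $\tfrac12\epsilon_k\|V_k\|^2$; combining it with $\varphi_k(V_k)-\varphi_k(0)\le-\tfrac12\|V_k\|_{\mathbb{H}_k}^2+L_hM_2\|V_k\|^2$ from \eqref{eq_th_phi1} and the lower bound on $\|V_k\|_{\mathbb{H}_k}^2$ above (recall $\tfrac45\delta>4L_hM_2$ and $\sigma<\tfrac14$) gives $F(\mathbf{R}_{X_k}(V_k))-F(X_k)\le-\tfrac12\sigma\|V_k\|_{\mathbb{H}_k}^2$ for all large $k$, so the unit step is accepted and $X_{k+1}=\mathbf{R}_{X_k}(V_k)$. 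Dividing the displayed identity by $\varphi_k(V_k)-\varphi_k(0)$ (which is of order $-\|V_k\|^2$) and letting $k\to\infty$ yields $\rho_k\to1$, hence $\rho_k\ge\eta_2$ eventually; then steps~15--22 of Algorithm~\ref{alg:ada_manpqn} contract $\sigma_k$ by $\gamma_1$ at every such iteration, and, the test in steps~4--6 of Algorithm~\ref{algo2} being inactive once $\lambda_{\min}(H_k)$ settles above $\kappa_3$, we conclude $\sigma_k\to0$.

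For part~(ii), the idea is to recast $V_k$ as an \emph{exact} proximal step for $\widehat{\mathbb{H}}_k$. From the optimality of $V_k$, $0\in g_k+\mathbb{H}_k[V_k]+\mathrm{Proj}_{\mathrm{T}_{X_k}\mathcal{M}}\mathbb{D}\mathbf{R}_{X_k}(V_k)^*[\partial h(\mathbf{R}_{X_k}(V_k))]$, and since $\widehat{\mathbb{H}}_k[V_k]=\mathbb{H}_k[V_k]+d_k$ this is precisely the optimality condition for $\min_{V}\langle g_k-d_k,V\rangle+\tfrac12\langle V,\widehat{\mathbb{H}}_k[V]\rangle+h(\mathbf{R}_{X_k}(V))$; by the argument of Lemma~\ref{lm_th_unique} applied to $\widehat{\mathbb{H}}_k$ (valid for large $k$) any point meeting this condition is the unique minimizer, so $V_k=\mathrm{prox}_{h\circ\mathbf{R}_{X_k}}^{\widehat{\mathbb{H}}_k}(-\widehat{\mathbb{H}}_k^{-1}[g_k-d_k])$. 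On the other hand, exactly as in the proof of Theorem~\ref{thm_superlinear}, with $\xi^*_k:=\mathbf{R}_{X_k}^{-1}(X^*)$ and $g^*_k:=\mathrm{grad}(f\circ\mathbf{R}_{X_k})(\xi^*_k)$ one gets $\xi^*_k=\mathrm{prox}_{h\circ\mathbf{R}_{X_k}}^{\widehat{\mathbb{H}}_k}(-\widehat{\mathbb{H}}_k^{-1}[g^*_k]+\xi^*_k)$ and $\|g^*_k-g_k-\mathrm{Hess}f(X_k)[\xi^*_k]\|\le r_k\|\xi^*_k\|$ with $r_k\to0$. Since $V_k,\xi^*_k\to0$, the Lipschitz estimate of Lemma~\ref{lm_th_lip} — whose proof uses only that the metric operator has smallest eigenvalue $\ge\tfrac45\delta$ and is bounded above — applies to $\widehat{\mathbb{H}}_k$ with these two arguments, yielding
\[
\|V_k-\xi^*_k\|\le\frac{\varsigma}{4L_hM_2}\big(r_k\|\xi^*_k\|+\epsilon_k\|V_k\|+\sigma_k\|\xi^*_k\|\big)
\]
for large $k$. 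Using that $\|\xi^*_k\|$ is comparable to $\|X_k-X^*\|$, that $r_k,\sigma_k,\epsilon_k\to0$, and $\|V_k\|\le\|\xi^*_k\|+\|V_k-\xi^*_k\|$ to deduce $\|V_k\|=O(\|X_k-X^*\|)$, the right-hand side becomes $o(\|X_k-X^*\|)$; then $\|X_{k+1}-X^*\|\le(1+\varepsilon)\|V_k-\xi^*_k\|=o(\|X_k-X^*\|)$ by \cite[(4.28)]{manpqn2023}, which is q-superlinear convergence.

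The step I expect to be the real obstacle — and the reason a verbatim copy of Theorem~\ref{thm_superlinear} fails — is that \eqref{cond_dennis} controls $H_k-\mathrm{Hess}f(X_k)$ only along the single direction $V_k$, not as an operator: decomposing $\xi^*_k=V_k+(\xi^*_k-V_k)$ and hitting the remainder with $H_k-\mathrm{Hess}f(X_k)$ leaves a term of order $\|\xi^*_k-V_k\|$, which is comparable to $\|X_{k+1}-X^*\|$ and cannot be moved to the left-hand side. Routing $d_k$ into the \emph{argument} of $\mathrm{prox}^{\widehat{\mathbb{H}}_k}$ instead of into the metric is what circumvents this, since the Lipschitz bound of Lemma~\ref{lm_th_lip} is then invoked with the uniformly well-conditioned $\widehat{\mathbb{H}}_k$ and $d_k$ enters linearly with the vanishing coefficient $\epsilon_k$. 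A secondary point requiring care is confirming that the inner while-loop in steps~4--6 of Algorithm~\ref{algo2} is eventually inactive, so that $\sigma_k\to0$ genuinely holds; this needs $\lambda_{\min}(H_k)$ to stay above $\kappa_3$ for large $k$, which one should record as a mild consequence of the damped quasi-Newton update (in the spirit of \eqref{xi}) combined with \eqref{eq:4_hess}.
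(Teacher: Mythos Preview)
Your argument is correct, and for part~(i) it matches the paper's (which merely says ``analogous to Lemma~\ref{lm_th_alpha}''), including the caveat you flag about steps~4--6 being eventually inactive; the paper glosses over this too.

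For part~(ii) you take a genuinely different route. The paper introduces an auxiliary exact-Hessian step $\widetilde{V}_k\in\arg\min_V\langle g_k,V\rangle+\tfrac12\langle V,\widehat{\mathbb{H}}_k[V]\rangle+h(\mathbf{R}_{X_k}(V))$, bounds $\|\widetilde{V}_k-\xi^*_k\|=o(\|X_k-X^*\|)$ by rerunning the exact-Hessian argument, and then compares $V_k$ with $\widetilde{V}_k$ by summing the two optimality inequalities to get $\langle\widetilde{V}_k-V_k,(H_k-\mathrm{Hess}f(X_k))[V_k]\rangle\ge L_hM_2\|\widetilde{V}_k-V_k\|^2$, whence $\|V_k-\widetilde{V}_k\|=o(\|V_k\|)$ by Dennis--Mor\'e; a final step converts $\|V_k\|$ to $O(\|X_k-X^*\|)$ via $\widetilde{V}_k$ and $\widetilde{X}_{k+1}$. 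Your approach is more direct: by writing $\mathbb{H}_k[V_k]=\widehat{\mathbb{H}}_k[V_k]-d_k$ you recognize that $V_k$ already satisfies the first-order condition for the $\widehat{\mathbb{H}}_k$-model with shifted linear term $g_k-d_k$, so (via the Lemma~\ref{lm_th_unique} argument) $V_k=\mathrm{prox}_{h\circ\mathbf{R}_{X_k}}^{\widehat{\mathbb{H}}_k}(-\widehat{\mathbb{H}}_k^{-1}[g_k-d_k])$, and Lemma~\ref{lm_th_lip} applies verbatim with $\widehat{\mathbb{H}}_k$. This avoids the auxiliary sequence entirely and yields the $o(\|X_k-X^*\|)$ bound in one step, at the cost of a small absorption argument to pass from $\epsilon_k\|V_k\|$ to $o(\|X_k-X^*\|)$. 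The paper's detour through $\widetilde{V}_k$ buys a cleaner separation of the two error sources (Hessian mismatch vs.\ Taylor remainder~$+~\sigma_k$), but your shift-into-the-argument trick is the sharper observation.
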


\begin{proof}
The proof of $\rm(i)$ is analogous to Lemma \ref{lm_th_alpha}.
We only give a brief proof of $\rm(ii)$.

For ease of notation, we use the notation $\mathcal{H}_k:=\mathrm{Hess}f(X_k)+\sigma_kI$.
At the current iterate $X_k$, we select a $\widetilde{V}_k$ in the set
$\arg\min_{V\in\mathrm{T}_{X_k}\mathcal{M}}\widetilde{\varphi}_k(V):=\langle g_k, V\rangle
+\frac{1}{2}\langle V,\mathcal{H}_k[V]\rangle+h(\mathbf{R}_{X_{k}}(V))$.
Then $\{\widetilde{V}_k\}$ is bounded. Let $\widetilde{V}$ be an arbitrary accumulation point of $\{\widetilde{V}_k\}$.
Thus, we have $\widetilde{V}\in\arg\min_{V\in\mathrm{T}_{X^*}\mathcal{M}}\varphi^*(V)$,
where $\varphi^*(V):=\langle{\rm grad}f(X^*), V\rangle
+\frac{1}{2}\langle V,{\rm Hess}f(X^*)[V]\rangle+h(\mathbf{R}_{X^*}(V))$.
By \eqref{eq:2_opt}, Assumptions \ref{ass:2} and Lemma \ref{lm_th_unique}, we know that $\arg\min_{V\in\mathrm{T}_{X^*}\mathcal{M}}\varphi^*(V)=\{0_{X^*}\}$.
Thus, $\widetilde{V}=0_{X^*}$, and therefore $\widetilde{V}_k\to0_{X^*}$.

By $\rm(i)$, $X_{k+1}:=\mathbf{R}_{X_k}(V_k)$ for sufficiently large $k$.
Let $\widetilde{X}_{k+1}:=\mathbf{R}_{X_k}(\widetilde{V}_k)$ and $\xi^*_k:=\mathbf{R}_{X_k}^{-1}(X^*)$.
By \cite[(4.28)]{manpqn2023}, there exists $\varepsilon>0$ such that $\|X_{k+1}-X^*\|
\leq(1+\varepsilon)\|V_k-\xi^*_k\|$ and $\|\widetilde{X}_{k+1}-X^*\|\leq(1+\varepsilon)\|\widetilde{V}_k-\xi^*_k\|$ for sufficiently large $k$.
Using the same argument as proving \eqref{eq_th_thm_super3}, we can deduce that
\begin{eqnarray}\label{eq_w}
\|\widetilde{X}_{k+1}-X^*\|\leq(1+\varepsilon)\|\widetilde{V}_k-\xi^*_k\|=o(\|X_k-X^*\|).
\end{eqnarray}
Thus
\begin{eqnarray}
\nonumber
\|X_{k+1}-X^*\|
&\leq&
(1+\varepsilon)(\|V_k-\widetilde{V}_k\|
+\|\widetilde{V}_k-\xi^*_k\|)
\\
\label{eq_th_dennis_super1}
&\leq&
(1+\varepsilon)(\|V_k-\widetilde{V}_k\|
+o(\|X_k-X^*\|).
\end{eqnarray}
By $\widetilde{V}_k\in\arg\min_{V\in\mathrm{T}_{X_k}\mathcal{M}}\widetilde{\varphi}_k(V)$ and $V_k\in\arg\min_{V\in\mathrm{T}_{X_k}\mathcal{M}}\varphi_k(V)$, 
similar to the proof of Lemma \ref{lm_th_unique}, we can obtain
\begin{eqnarray}
\nonumber
\widetilde{V}_k
&\in&
\mathop{\arg\min}\limits_{V\in\mathrm{T}_{X_k}\mathcal{M}}
\langle g_k+\mathcal{H}_k[\widetilde{V}_k], V\rangle
+h(\mathbf{R}_{X_k}(V))
+L_hM_2\|V-\widetilde{V}_k\|^2,
\\
\nonumber
V_k
&\in&
\mathop{\arg\min}\limits_{V\in\mathrm{T}_{X_k}\mathcal{M}}
\langle g_k+(H_k+\sigma_kI)[V_k], V\rangle
+h(\mathbf{R}_{X_k}(V))
+L_hM_2\|V-V_k\|^2.
\end{eqnarray}
Thus, it holds that
\begin{eqnarray}
\nonumber
\langle g_k+\mathcal{H}_k[\widetilde{V}_k],  \widetilde{V}_k- V_k\rangle
+h(\mathbf{R}_{X_k}(\widetilde{V}_k))
&\leq&
h(\mathbf{R}_{X_k}(V_k))
+L_hM_2\|\widetilde{V}_k-V_k\|^2,
\\
\nonumber
\langle g_k+(H_k+\sigma_kI)[V_k], V_k- \widetilde{V}_k\rangle
+h(\mathbf{R}_{X_k}(V_k))
&\leq&
h(\mathbf{R}_{X_k}(\widetilde{V}_k))
+L_hM_2\|\widetilde{V}_k-V_k\|^2.
\end{eqnarray}
Summing the above two inequalities and rearranging, we can obtain
\begin{eqnarray}
\nonumber
&&
\langle \widetilde{V}_k-V_k, (H_k-\mathrm{Hess}f(X_k))[V_k]\rangle
\\
\label{eq:th_dennis1}
&\geq&
\langle \widetilde{V}_k-V_k, \mathcal{H}_k[\widetilde{V}_k-V_k]\rangle
-2L_hM_2\|\widetilde{V}_k-V_k\|^2\geq
L_hM_2\|\widetilde{V}_k-V_k\|^2,
\end{eqnarray}
where the second inequality follows from \eqref{eq:4_hess_neigh}.
Combining \eqref{eq:th_dennis1} and \eqref{cond_dennis} yields $\|V_k-\widetilde{V}_k\|=o(\|V_k\|)$,
which together with \eqref{eq_th_dennis_super1} implies
\begin{eqnarray}
\label{eq_th_dennis_super3}
\|X_{k+1}-X^*\|=o(\|V_k\|)+o(\|X_k-X^*\|).
\end{eqnarray}
Since $\|V_k-\widetilde{V}_k\|=o(\|V_k\|)$, there exists $\theta_1>0$ such that $\|V_k\|
\leq
\theta_1\|\widetilde{V}_k\|$ for all sufficiently large $k$.

By $X_k\to X^*$, we know that there exists $\theta_2>0$ such that $\|\widetilde{V}_k\|=\|\mathbf{R}_{X_k}^{-1}(\widetilde{X}_{k+1})\|
\leq
\theta_2\|\widetilde{X}_{k+1}-X_k\|$ for all sufficiently large $k$.
Thus, we have
\begin{eqnarray}
\nonumber
\|V_k\|
&\leq&
\theta_1\theta_2\|\widetilde{X}_{k+1}-X_k\|
\leq
\theta_1\theta_2(\|\widetilde{X}_{k+1}-X^*\|+\|X_k-X^*\|)
\\
\nonumber
&\leq&
\theta_1\theta_2(o(\|X_k-X^*\|)+\|X_k-X^*\|)
=
O(\|X_k-X^*\|),
\end{eqnarray}
where the last inequality uses \eqref{eq_w}.
Substituting $\|V_k\|=O(\|X_k-X^*\|)$ into \eqref{eq_th_dennis_super3} yields $\|X_{k+1}-X^*\|=o(\|X_k-X^*\|)$.
\end{proof}

\section{Numerical Experiments}
\label{sec:5}

In this section, we consider applying our algorithm to the compressed modes (CM) problems and the sparse principal component analysis (sparse PCA) problems, which will be introduced in details later.
Although ARPN shows local superlinear convergence rate, 
due to the high computational cost of solving the subproblem \eqref{sub_prob1},  it is difficult to apply ARPN to large-scale composite optimization problems.
In the following, we only present numerical results of ARPQN for \eqref{eq:prob}. 
To demonstrate the practical efficiency of ARPQN, 
we compare it with other existing numerical methods for composite optimization problems, 
including ManPG, ManPG-Ada (both methods are proposed in \cite{mashiqian2020}), ManPQN \cite{manpqn2023} and a semismooth Newton based augmented Lagrangian (named ALMSSN) method proposed in \cite{dingchao2022}.
The above algorithms are implemented in MATLAB R2018b and run on a PC with Intel Core i5 CPU (2.3GHz) and 8GB memory.

For the subproblems of ManPG, ManPG-Ada, ManPQN and ARPQN,
we use the adaptive regularized semismooth Newton (ASSN) method to solve them.
Similar with ManPG, ManPG-Ada and ManPQN, the stopping criterion of ARPQN is set as either $\|V_k\|^2\leq10^{-8}nr$ or the algorithm reaches the maximum iteration number $70000$. The maximum iteration number for solving the subproblem \eqref{eq:sub_prob} is set as $100$.
We set the parameter $\vartheta_k$ of ARPQN to $\max\{\frac{\mathrm{tr}(y_{k-1}^Ty_{k-1})}{\mathrm{tr}(s_{k-1}^Ty_{k-1})},\vartheta_0\}$, where $\vartheta_0>0$ is a given constant, $s_{k-1}$ and $y_{k-1}$ are defined in subsection \ref{subsec:algo1}.
The initial value of the regularization parameter is set as $\sigma_0=1$.
For ALMSSN, we choose the QR decomposition as the retraction mapping;
for ManPG, ManPG-Ada and ManPQN, the singular value decomposition (SVD) is used as the retraction mapping;
the retraction mapping for ARPQN will be discussed later.
The parameters used in ManPG, ManPG-Ada, ManPQN and ALMSSN are set to be the default values in \cite{mashiqian2020}, \cite{manpqn2023} and \cite{dingchao2022}, respectively.
We conduct numerical experiments utilizing Algorithm \ref{alg:ada_manpqn}, which is equipped with either monotone line search strategy or nonmonotone line search strategy in steps 10-12.
For simplicity, we denote the former algorithm as ARPQN and denote the latter as NLS-ARPQN.
In practical implementation, NLS-ARPQN demonstrates better numerical performance compared to ARPQN.

In the following, we firstly introduce how to use the ASSN method to solve the subproblem \eqref{eq:sub_prob} in practical.
Next, we do some numerical tests on different groups of parameters $(\eta_1,\eta_2,\gamma_1,\gamma_2)$ for NLS-ARPQN on the CM problem. Then we choose the parameter combination with the best performance and apply it to NLS-ARPQN for all test problems.
Moreover, in order to investigate the effect of retraction in the NLS-ARPQN method, we provide numerical experiments on CM problems encompassing different types of retraction, including SVD, QR decomposition and Cayley transformation.
It can be observed that NLS-ARPQN with the retraction based on SVD outperforms NLS-ARPQN with other retractions. Consequently, we apply the retraction based on SVD to NLS-ARPQN for all test problems.
Numerical results of the above mentioned algorithms are averaged on $50$ randomly generated instances with different random initial points. 
Figures and tables below report the averaged results of each algorithm, including running time in seconds, iteration number, sparsity of solution $X^*$, the total number of line search steps and the averaged number of iterations of the ASSN method.

\subsection{The ASSN Method for Solving \eqref{eq:sub_prob}}

In our implementation, $\mathcal{B}_k$ in \eqref{eq:sub_prob} is replaced by $\mathbf{B}_k$ which satisfies \eqref{eq:3_bkdiag}, i.e. 
${\rm tr}(V^T\mathbf{B}_k[V])={\rm tr}(V^T({\rm diag}B_k)V)$ for any $V\in{\rm T}_{X_k}\mathcal{M}$, where $B_k$ is updated by \eqref{eq3_eu_bk}.
We use $\mathbb{B}_k$ to denote ${\rm diag}B_k+\sigma_kI$.
Based on \eqref{eq:3_bkdiag}, the Lagrangian function for \eqref{eq:sub_prob} can be formulated as
\begin{eqnarray}
\nonumber
\mathcal{L}_k(V,\Lambda)
&=&
\langle\nabla f(X_k),V\rangle+\frac{1}{2}{\rm tr}(V^T(\mathbf{B}_k+\sigma_kI)[V])
+h(X_k+V)-\langle \Lambda,\mathcal{A}_k(V)\rangle
\\
\nonumber
&=&
\langle\nabla f(X_k)-\mathcal{A}_k^* (\Lambda),V\rangle+\frac{1}{2}{\rm tr}(V^T\mathbb{B}_kV)
+h(X_k+V),
\end{eqnarray}
where the symmetric matrix $\Lambda\in\mathbb{R}^{r\times r}$ is the Lagrange multiplier for the constraint $\mathcal{A}_k(V):=V^TX_k+X_k^TV=0$, which means $V\in{\rm T}_{X_k}\mathcal{M}$, and $\mathcal{A}_k^*(\cdot)$ is denoted the adjoint operator of $\mathcal{A}_k(\cdot)$.
Let $V(\Lambda):=\arg\min_{V}\mathcal{L}_k(V, \Lambda)$. 
Then $V(\Lambda)={\rm prox}_h^{\mathbb{B}_k}\big(X_k-\mathbb{B}_k^{-1}(\nabla f(X_k)-\mathcal{A}_k^*(\Lambda))\big)-X_k$ 
where ${\rm prox}_h^{\mathbb{B}_k}(x)=\arg\min_{y}\left\{h(y)+\frac{1}{2}\|y-x\|_{\mathbb{B}_k}\right\}$.
Substituting $V(\Lambda)$ into $\mathcal{A}_k(V)=0$ yields
\begin{equation}\label{eq3_e}
E(\Lambda):=\mathcal{A}_k(V(\Lambda)) = V(\Lambda)^TX_k+X_k^TV(\Lambda) = 0.
\end{equation}
It can be proved  that the operator $E(\cdot)$ is monotone and Lipschitz continuous, and thus the ASSN method can be applied to solving \eqref{eq3_e}. 

Before applying the ASSN method, we need to vectorize $E(\Lambda)$. 
Since $E(\Lambda)$ and $\Lambda$ are both symmetric, we only focus on the vectorization of their lower triangular part. 
Thus there exists a duplication matrix $U_r\in\mathbb{R}^{r^2\times\frac{1}{2}r(r+1)}$ such that
$\overline{\rm vec}(\Lambda)=U_r^+{\rm vec}(\Lambda)$,
where $\overline{\rm vec}(\Lambda)$ denotes the vectorization of the lower triangular part of $\Lambda$
and $U_r^+=(U_r^TU_r)^{-1}U_r$ denotes the Moore-Penrose inverse of $U_r$. 
Define
\[
\mathcal{G}(\overline{\rm vec}(\Lambda)) 
:= 4 U_r^+ (I_r\otimes X_k^T)
\mathcal{J}(y)\vert_{y={\rm vec}(X(\Lambda))}
(I_r\otimes (\mathbb{B}_k^{-1}X_k))U_r,
\]
where $\mathcal{J}(y)$ is the generalized Jacobian of ${\rm prox}_h^{\mathbb{B}_k}(y)$ and $X(\Lambda):=X_k-\mathbb{B}_k^{-1}(\nabla f(X_k)-\mathcal{A}_k^*(\Lambda))$. It can be proved that $\mathcal{G}(\overline{\rm vec}(\Lambda)) \in \partial \overline{\rm vec}(E(U_r\overline{\rm vec}(\Lambda)))$ by the procedure in \cite[section 4.2]{mashiqian2020}. In ASSN, the conjugate gradient method is used to compute the Newton step $d_{\ell}$ at the current iterate $\Lambda_{\ell}$ by solving
\[
(\mathcal{G}(\overline{\rm vec}(\Lambda_{\ell}))+\eta I)d_{\ell} = - \overline{\rm vec}(E(\Lambda_{\ell})),
\]
where $\eta>0$ is a regularization parameter. 
Then, we use the same strategy as that in \cite{assn2018} to obtain the next iterate $\Lambda_{\ell+1}$.
For more details about the ASSN method, we refer the reader to \cite{assn2018,mashiqian2020}.

\subsection{Compressed Modes Problem}\label{sec51}

In this subsection, we consider the compressed modes (CM) problem \cite{cm2013} which looks for spatially localized sparse solutions of the independent-particle Schr{\"o}dinger's equation. For the 1D free-electron case, the CM problem can be
\begin{equation}
\label{eq:5_cm}
\min_{X\in\mathcal{M}}tr(X^THX)+\mu\|X\|_1,
\end{equation}
where $H$ is a discretization of the Schr{\"o}dinger operator.

In this subsection, we firstly compare different combinations of parameters $(\eta_1,\eta_2,\gamma_1,\gamma_2)$ on the CM problems with different $n$ and $r$. 
We report the numerical results in Tables \ref{tb50_1} and \ref{tb50_2}, from which 
we can observe that the performance of NLS-ARPQN is not sensitive to different parameters. 
In particular, the parameter combination $(\eta_1,\eta_2,\gamma_1,\gamma_2)=(0.2,0.9,0.3,3)$ requires less iterations and running time than other parameter combinations in most cases of $n$ and $r$. Thus, we apply this parameter combination to our algorithm for all test problems.

\begin{table}[htbp]\centering
\caption{Comparison of different parameters $(\eta_1,\eta_2,\gamma_1,\gamma_2)$ for NLS-ARPQN on CM problems, different $n=\{64,128,256,512\}$ with $r= 4$ and $\mu=0.1$.
The best result is marked in bold.
}\label{tb50_1}
\begin{tabular}{ccccccc}
\midrule
$n=64$ & Iter & $F(X^*)$  & sparsity & CPU time & \# line-search &  SSN iters  \\
\midrule
$(0.1,0.9,0.3,3)$ &   109.04    & 1.425  &    0.81  &   0.0301   &   303.92   &   1.57  \\ 
$(0.1,0.7,0.3,3)$ &   107.96    & 1.425  &    0.80  &   {\bf 0.0291}   &   300.02   &   1.59  \\ 
$(0.2,0.9,0.3,3)$ &   110.20    & 1.425  &    0.80  &   0.0340   &   398.22   &   1.56  \\ 
$(0.2,0.7,0.3,3)$ &   108.90    & 1.425  &    0.80  &   0.0345   &   406.52   &   1.56  \\ 
$(0.2,0.5,0.3,3)$ &   {\bf 107.18}    & 1.425  &    0.80  &   0.0338   &   386.52   &   1.52  \\ 
$(0.2,0.5,0.1,2)$ &   111.76    & 1.425  &    0.80  &   0.0367   &   439.66   &   1.54  \\ 
$(0.2,0.5,0.1,3)$ &   109.08    & 1.425  &    0.80  &   0.0361   &   428.40   &   1.54  \\ 
$(0.2,0.5,0.1,5)$ &   107.40    & 1.425  &    0.80  &   0.0350   &   408.76   &   1.54  \\ 
\midrule
$n=128$ &   \\
\midrule
$(0.1,0.9,0.3,3)$ &   146.54    & 1.887  &    0.82  &   0.0480   &   474.72   &   1.38  \\ 
$(0.1,0.7,0.3,3)$ &   142.70    & 1.887  &    0.82  &   {\bf 0.0470}   &   453.10   &   1.40  \\ 
$(0.2,0.9,0.3,3)$ &   139.94    & 1.887  &    0.82  &   0.0512   &   554.24   &   1.40  \\ 
$(0.2,0.7,0.3,3)$ &   {\bf 139.06}    & 1.887  &    0.82  &   0.0524   &   575.96   &   1.37  \\ 
$(0.2,0.5,0.3,3)$ &   140.58    & 1.887  &    0.82  &   0.0538   &   600.98   &   1.37  \\ 
$(0.2,0.5,0.1,2)$ &   147.06    & 1.887  &    0.82  &   0.0572   &   647.66   &   1.34  \\ 
$(0.2,0.5,0.1,3)$ &   149.34    & 1.887  &    0.82  &   0.0602   &   688.34   &   1.35  \\ 
$(0.2,0.5,0.1,5)$ &   146.54    & 1.887  &    0.82  &   0.0575   &   642.56   &   1.36  \\ 
\midrule
$n=256$ &   \\
\midrule
$(0.1,0.9,0.3,3)$ &   253.22    & 2.495  &    0.84  &   0.1382   &   1222.70   &   1.25  \\ 
$(0.1,0.7,0.3,3)$ &   255.34    & 2.495  &    0.84  &   {\bf 0.1378}   &   1218.58   &   1.25  \\ 
$(0.2,0.9,0.3,3)$ &   {\bf 217.50}    & 2.495  &    0.84  &   0.1426   &   1410.02   &   1.26  \\ 
$(0.2,0.7,0.3,3)$ &   221.32    & 2.495  &    0.84  &   0.1514   &   1561.62   &   1.25  \\ 
$(0.2,0.5,0.3,3)$ &   221.02    & 2.495  &    0.84  &   0.1619   &   1720.42   &   1.22  \\ 
$(0.2,0.5,0.1,2)$ &   260.16    & 2.495 &    0.84  &   0.2216   &   2551.18   &   1.13  \\ 
$(0.2,0.5,0.1,3)$ &   250.30    & 2.495  &    0.84  &   0.2027   &   2334.66   &   1.14  \\ 
$(0.2,0.5,0.1,5)$ &   252.52    & 2.495  &    0.84  &   0.1947   &   2171.68   &   1.17  \\ 
\midrule
$n=512$ &   \\
\midrule
$(0.1,0.9,0.3,3)$ &   518.98    & 3.295 &    0.86  &   0.2273   &   550.48   &   0.90  \\ 
$(0.1,0.7,0.3,3)$ &   525.36    & 3.295  &    0.86  &   0.2380   &   563.62   &   0.89  \\ 
$(0.2,0.9,0.3,3)$ &   511.42    & 3.295  &    0.86  &   0.2206   &   561.14   &   0.90  \\ 
$(0.2,0.7,0.3,3)$ &   514.10    & 3.295  &    0.86  &   0.2250   &   568.80   &   0.89  \\ 
$(0.2,0.5,0.3,3)$ &   {\bf 509.80}    & 3.295  &    0.86  &   0.2211   &   565.90   &   0.89  \\ 
$(0.2,0.5,0.1,2)$ &   513.62    & 3.296  &    0.86  &   0.2296   &   550.42   &   0.90  \\ 
$(0.2,0.5,0.1,3)$ &   508.98    & 3.295  &    0.86  &   {\bf 0.2198}   &   552.12   &   0.90  \\ 
$(0.2,0.5,0.1,5)$ &   509.86    & 3.295  &    0.86  &   0.2220   &   556.82   &   0.90  \\
\midrule
\end{tabular}
\end{table}

\begin{table}[htbp]\centering
\caption{Comparison of different parameters $(\eta_1,\eta_2,\gamma_1,\gamma_2)$ for NLS-ARPQN on CM problems, different $r=\{2,4,6,8\}$ with $n= 256$ and $\mu=0.2$.
The best result is marked in bold.
}\label{tb50_2}
\begin{tabular}{ccccccc}
\midrule
$r=2$ & Iter & $F(X^*)$  & sparsity & CPU time & \# line-search &  SSN iters  \\
\midrule
$(0.1,0.9,0.3,3)$ &   143.56    & 2.168  &    0.89  &   0.0630   &   207.40   &   1.37  \\ 
$(0.1,0.7,0.3,3)$ &   151.68    & 2.168  &    0.89  &   0.0680   &   223.86   &   1.36  \\ 
$(0.2,0.9,0.3,3)$ &   {\bf 123.54}    & 2.168  &    0.89  &   {\bf 0.0560}   &   213.74   &   1.32  \\ 
$(0.2,0.7,0.3,3)$ &   134.64    & 2.168  &    0.89  &   0.0625   &   248.44   &   1.31  \\ 
$(0.2,0.5,0.3,3)$ &   140.02    & 2.168  &    0.89  &   0.0662   &   262.90   &   1.34  \\ 
$(0.2,0.5,0.1,2)$ &   179.36    & 2.168  &    0.89  &   0.0965   &   398.72   &   1.29  \\ 
$(0.2,0.5,0.1,3)$ &   172.62    & 2.168  &    0.89  &   0.0888   &   373.38   &   1.34  \\ 
$(0.2,0.5,0.1,5)$ &   171.30    & 2.168  &    0.89  &   0.0836   &   343.10   &   1.36  \\  
\midrule
$r=4$ &   \\
\midrule
$(0.1,0.9,0.3,3)$ &   270.20    & 4.336  &    0.88  &   {\bf 0.1078}   &   951.02   &   1.13  \\ 
$(0.1,0.7,0.3,3)$ &   275.92    & 4.336  &    0.88  &   0.1106   &   989.46   &   1.12  \\ 
$(0.2,0.9,0.3,3)$ &   {\bf 244.28}    & 4.336 &    0.88  &   0.1098   &   1083.74   &   1.14  \\ 
$(0.2,0.7,0.3,3)$ &   244.68    & 4.336  &    0.88  &   0.1173   &   1209.62   &   1.14  \\ 
$(0.2,0.5,0.3,3)$ &   253.36    & 4.336  &    0.88  &   0.1267   &   1326.68   &   1.13  \\ 
$(0.2,0.5,0.1,2)$ &   277.42    & 4.336  &    0.88  &   0.1612   &   1878.98   &   1.06  \\ 
$(0.2,0.5,0.1,3)$ &   274.40    & 4.336  &    0.88  &   0.1493   &   1675.16   &   1.08  \\ 
$(0.2,0.5,0.1,5)$ &   270.54    & 4.336  &    0.88  &   0.1432   &   1572.04   &   1.11  \\ 
\midrule
$r=6$ &   \\
\midrule
$(0.1,0.9,0.3,3)$ &   364.02    & 6.509  &    0.88  &   0.1954   &   1291.42   &   1.28  \\ 
$(0.1,0.7,0.3,3)$ &   375.76    & 6.509 &    0.88  &   0.2027   &   1357.06   &   1.28  \\ 
$(0.2,0.9,0.3,3)$ &   {\bf 307.46}    & 6.509  &    0.88  &   {\bf 0.1856}   &   1338.66   &   1.30  \\ 
$(0.2,0.7,0.3,3)$ &   333.32    & 6.509  &    0.88  &   0.2159   &   1659.54   &   1.29  \\ 
$(0.2,0.5,0.3,3)$ &   335.14    & 6.509  &    0.88  &   0.2244   &   1812.86   &   1.24  \\ 
$(0.2,0.5,0.1,2)$ &   347.88    & 6.509  &    0.88  &   0.2683   &   2360.14   &   1.22  \\ 
$(0.2,0.5,0.1,3)$ &   340.80    & 6.509  &    0.88  &   0.2457   &   2085.14   &   1.22  \\ 
$(0.2,0.5,0.1,5)$ &   342.80    & 6.509  &    0.88  &   0.2371   &   1922.68   &   1.25  \\ 
\midrule
$r=8$ &  \\
\midrule
$(0.1,0.9,0.3,3)$ &   816.40    & 8.690  &    0.87  &   0.5414   &   3043.28   &   1.33  \\ 
$(0.1,0.7,0.3,3)$ &   830.00    & 8.690  &    0.87  &   0.5641   &   3157.40   &   1.35  \\ 
$(0.2,0.9,0.3,3)$ &   {\bf 674.56}    & 8.690  &    0.87  &   {\bf 0.5208}   &   3160.52   &   1.41  \\ 
$(0.2,0.7,0.3,3)$ &   693.60    & 8.690  &    0.87  &   0.5543   &   3496.16   &   1.36  \\ 
$(0.2,0.5,0.3,3)$ &   718.12    & 8.690  &    0.87  &   0.6095   &   4024.88   &   1.36  \\ 
$(0.2,0.5,0.1,2)$ &   812.22    & 8.690  &    0.87  &   0.8188   &   6362.10   &   1.24  \\ 
$(0.2,0.5,0.1,3)$ &   768.32    & 8.690  &    0.87  &   0.7161   &   5283.62   &   1.30  \\ 
$(0.2,0.5,0.1,5)$ &   740.62    & 8.690  &    0.87  &   0.6512   &   4553.80   &   1.32  \\ 
\midrule
\end{tabular}
\end{table}

%Since retractions may affect the efficiency and the accuracy of an algorithm, 
In order to assess the impact of retractions on the efficiency and accuracy of NLS-ARPQN,
we conduct numerical experiments on the CM problems comparing different types of retractions, including SVD, QR decomposition and Cayley transformation.
Before showing numerical results, we introduce the above retractions.
By \cite[Proposition 7]{AbsMal2012}, the retraction based on SVD, denoted by $\mathbf{R}_X^{\mathrm{SVD}}(\xi)$, is just the $W$ of the polar decomposition $X+\xi=WS$ where $W\in\mathbb{R}^{n\times p}$ is orthogonal and $S\in\mathbb{R}^{p\times p}$ is symmetric positive definite (see Remark \ref{rm_retr});
the retraction based on the QR decomposition \cite{absil2008,mashiqian2020} can be written as
\[
\mathbf{R}_{X}^{\mathrm{QR}}(\xi)=\mathrm{qf}(X+\xi),
\]
where $\mathrm{qf}(A)$ denotes the $Q$ factor of the QR decomposition of $A$;
the retraction based on the Cayley transformation \cite{mashiqian2020,wzwywt} is given by
\[
\mathbf{R}_{X}^{\mathrm{Cayley}}(\xi)=\big(I_n-\frac{1}{2}W(\xi)\big)^{-1}\big(I_n+\frac{1}{2}W(\xi)\big)X,
\]
where $W(\xi)=(I_n-\frac{1}{2}XX^T)\xi X^T-X\xi^T(I_n-\frac{1}{2}XX^T)$.
Then we report numerical results of NLS-ARPQN with above retractions in Tables \ref{tb_cm_retr1}. It can be observed that NLS-ARPQN with the retraction using SVD outperforms NLS-ARPQN with other retractions in terms of iteration number and CPU time.
Thus we apply the SVD retraction to our algorithm for all test problems.

\begin{table}[htbp]\centering
\caption{Comparison of different retractions, including Cayley transformation, QR decomposition and SVD, for NLS-ARPQN on CM problems with different $(n,r,\mu)$.
}\label{tb_cm_retr1}
\begin{tabular}{ccccccc}
\midrule
$(n,r,\mu)=(256,4,0.1)$ & Iter & $F(X^*)$  & sparsity & CPU time & \# line-search &  SSN iters  \\
\midrule
ARPQN-SVD &   367.84    & 2.493  &    0.84  &   0.3349   &   2343.24   &   1.24  \\ 
ARPQN-QR &   862.08    & 2.493  &    0.84  &   0.4502   &   80.44   &   0.98  \\ 
ARPQN-Cayley &   259.80    & 2.493  &    0.84  &   3.1123   &   903.96   &   1.76  \\ 
\midrule
$(n,r,\mu)=(256,8,0.1)$         &  \\
\midrule
ARPQN-SVD &   755.20    & 5.021  &    0.80  &   1.1603   &   4866.40   &   1.52  \\ 
ARPQN-QR &   1433.36    & 5.021  &    0.80  &   3.0891   &   1876.56   &   1.93  \\ 
ARPQN-Cayley &   394.80    & 5.021  &    0.80  &   5.8411   &   1318.64   &   1.90  \\ 
\midrule
$(n,r,\mu)=(256,12,0.1)$         &  \\
\midrule
ARPQN-SVD &   984.16    & 7.759  &    0.77  &   3.7850   &   5792.16   &   1.90  \\ 
ARPQN-QR &   1740.08    & 7.761  &    0.77  &   8.6853   &   1315.92   &   2.28  \\ 
ARPQN-Cayley &   460.88    & 7.760  &    0.77  &   7.5592   &   1473.72   &   1.99  \\ 
\midrule
$(n,r,\mu)=(512,4,0.1)$         &  \\
\midrule
ARPQN-SVD &   453.08    & 3.297  &    0.86  &   0.5356   &   3405.08   &   1.12  \\ 
ARPQN-QR &   1189.84    & 3.297  &    0.86  &   0.5247   &   49.92   &   0.92  \\ 
ARPQN-Cayley &   240.48    & 3.297  &    0.86  &   10.3448   &   891.64   &   1.55  \\ 
\midrule
$(n,r,\mu)=(512,8,0.1)$         &  \\
\midrule
ARPQN-SVD &   696.28    & 6.659  &    0.83  &   1.7551   &   5196.16   &   1.34  \\ 
ARPQN-QR &   1340.64    & 6.660  &    0.83  &   2.9043   &   197.88   &   1.80  \\ 
ARPQN-Cayley &   245.88    & 6.659  &    0.83  &   10.3580   &   839.40   &   1.75  \\ 
\midrule
$(n,r,\mu)=(512,12,0.1)$         &  \\
\midrule
ARPQN-SVD &   839.12    & 10.167  &    0.81  &   3.2831   &   6198.68   &   1.65  \\ 
ARPQN-QR &   1709.68    & 10.169  &    0.81  &   6.9031   &   987.64   &   2.25  \\ 
ARPQN-Cayley &   287.16    & 10.167  &    0.81  &   12.7652   &   991.56   &   1.86  \\ 
\midrule
\end{tabular}
\end{table}

We compare ManPG, ManPG-Ada, ALMSSN, ManPQN, ARPQN and NLS-ARPQN on the CM problem and report their numerical results in Figures \ref{fig51_1}-\ref{fig51_3} and Tables \ref{tb51_1}-\ref{tb51_3}. It can be observed that NLS-ARPQN outperforms other algorithms in most cases. 
Compared to proximal gradient type methods, NLS-ARPQN requires less iterations and CPU time to converge, especially when $n$ and $r$ are large. 
Comparing ARPQN with NLS-ARPQN, we can observe that NLS-ARPQN shows better performance in terms of iteration number and running time than ARPQN due to the nonmonotone line search strategy.
The ALMSSN method usually outperforms ManPG, ManPG-Ada and ManPQN, especially when $n$ and $r$ become larger.
In most cases, ALMSSN needs less iterations and more running time to converge than ARPQN and NLS-ARPQN.

For ManPG, ManPG-Ada, ManPQN, ARPQN and NLS-ARPQN, the total number of line search steps and the averaged iteration number of the ASSN method for solving the subproblem are reported in the last two columns of Tables \ref{tb51_1}-\ref{tb51_3}.
Comparing NLS-ARPQN with ManPQN, we can see that NLS-ARPQN needs less line search steps, which indicates the role of the adaptive parameter $\sigma_k$ for accelerating the convergence of our algorithm. 
In particular, when $n$ and $r$ are large, NLS-ARPQN needs less averaged iterations of ASSN method than ManPQN in most cases due to the fact that the regularization parameter $\sigma_k$ makes the condition number of subproblem \eqref{eq:sub_prob} better.

\begin{figure}[H]
	\centering  
	\subfigbottomskip=2pt 
	\subfigcapskip=-5pt 
	\subfigure[CPU]{
		\includegraphics[width=0.45\linewidth]{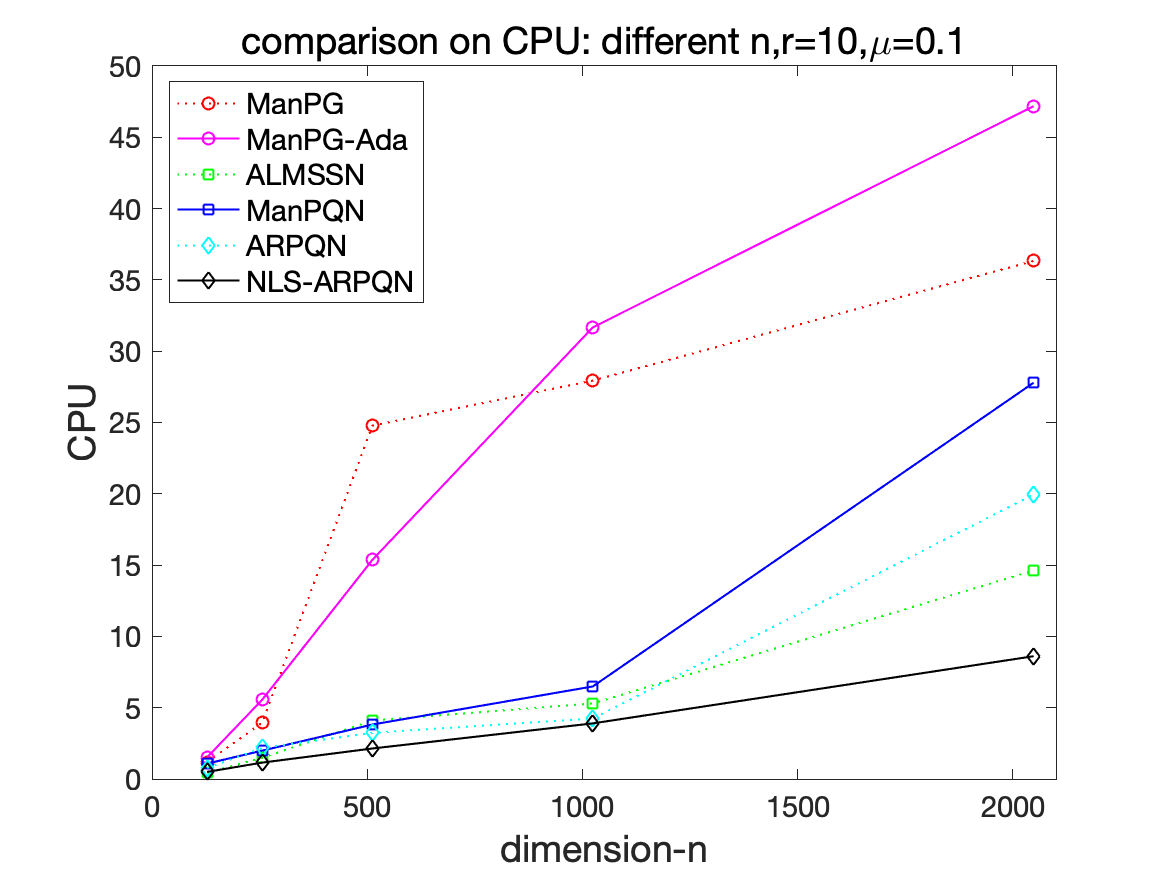}}
		\hspace{-5mm}
	\subfigure[Iter]{
		\includegraphics[width=0.45\linewidth]{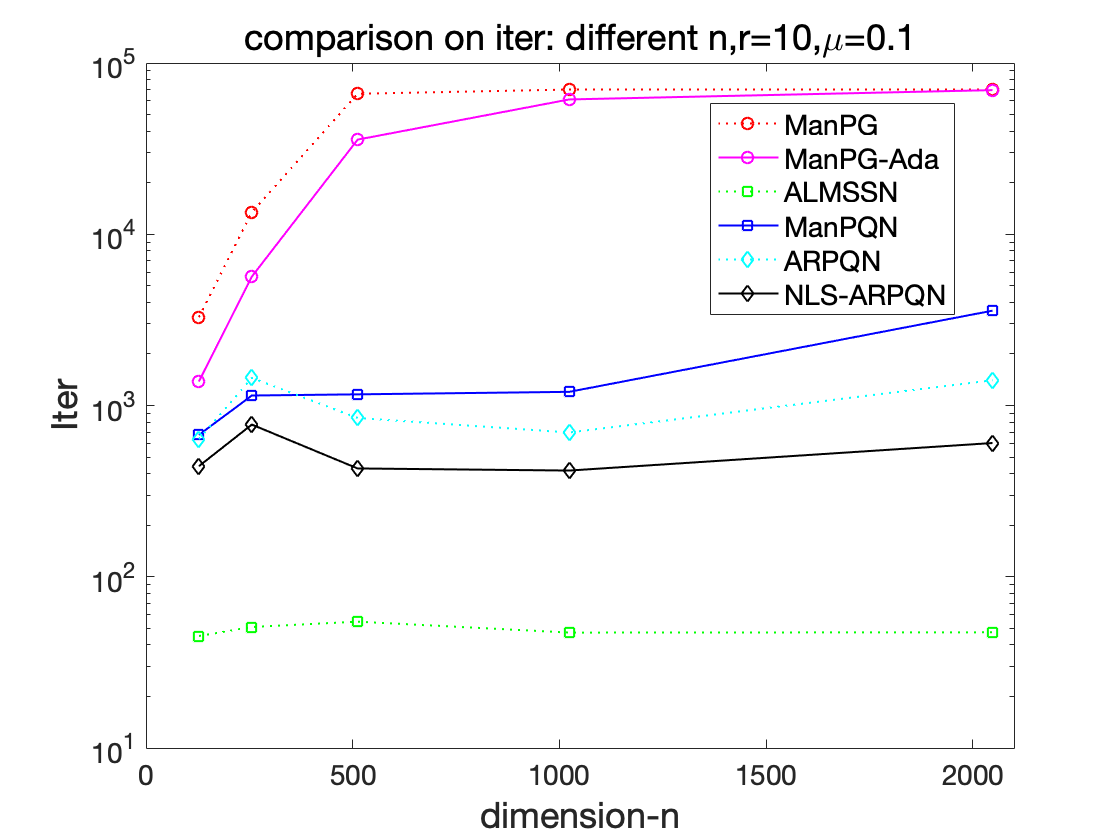}}
	\caption{Comparison on CM problem, different $n=\{128,256,512,1024,2048\}$ with $r= 10$ and $\mu=0.1$.}\label{fig51_1}
\end{figure}

\begin{figure}[H]
	\centering  
	\subfigbottomskip=2pt 
	\subfigcapskip=-5pt 
	\subfigure[CPU]{
		\includegraphics[width=0.45\linewidth]{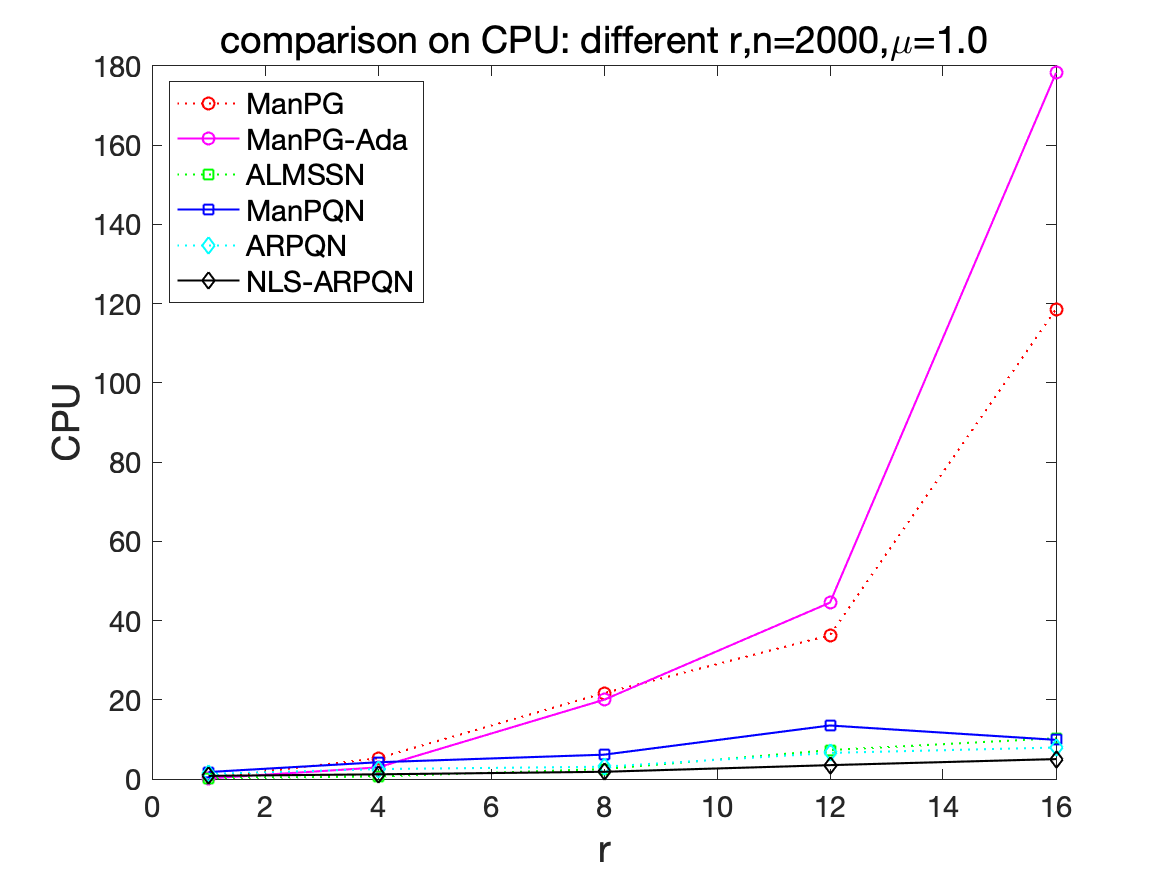}}
		\hspace{-5mm}
	\subfigure[Iter]{
		\includegraphics[width=0.45\linewidth]{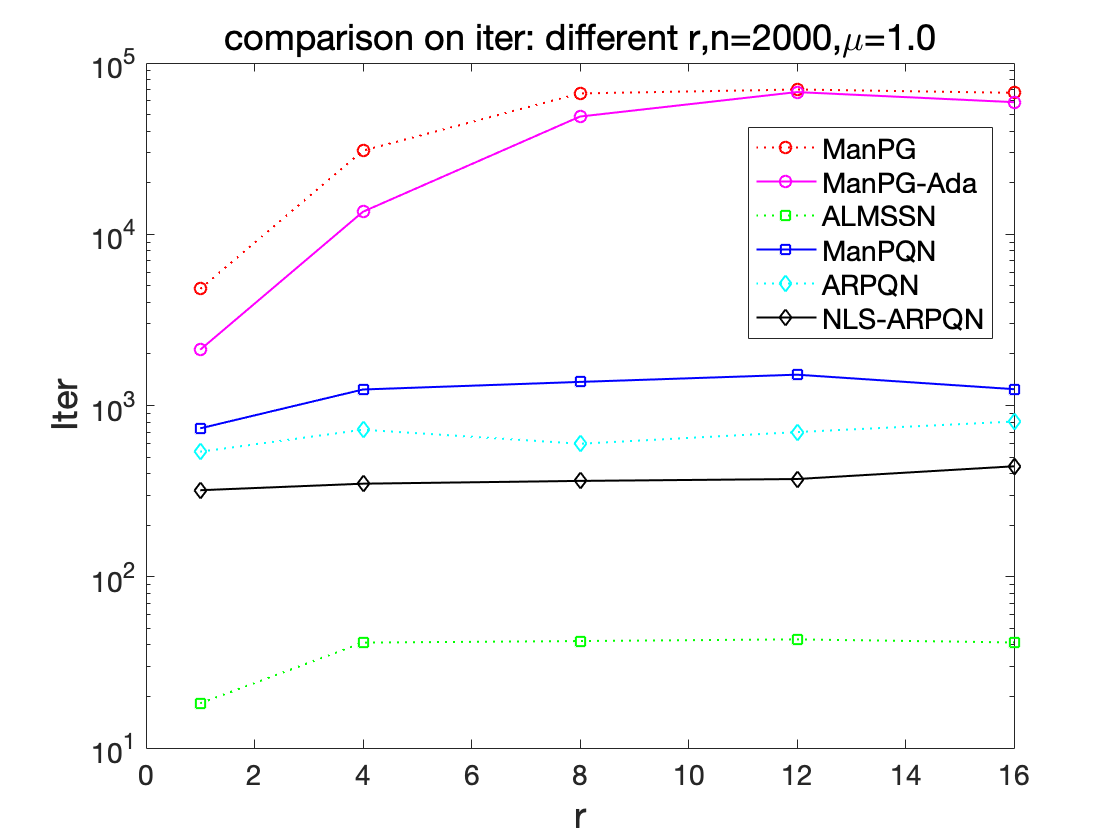}}
	\caption{Comparison on CM problem, different $r=\{1,4,8,12,16\}$ with $n=1024$ and $\mu=0.1$.}\label{fig51_22}
\end{figure}

\begin{figure}[H]
	\centering  
	\subfigbottomskip=2pt 
	\subfigcapskip=-5pt 
	\subfigure[CPU]{
		\includegraphics[width=0.45\linewidth]{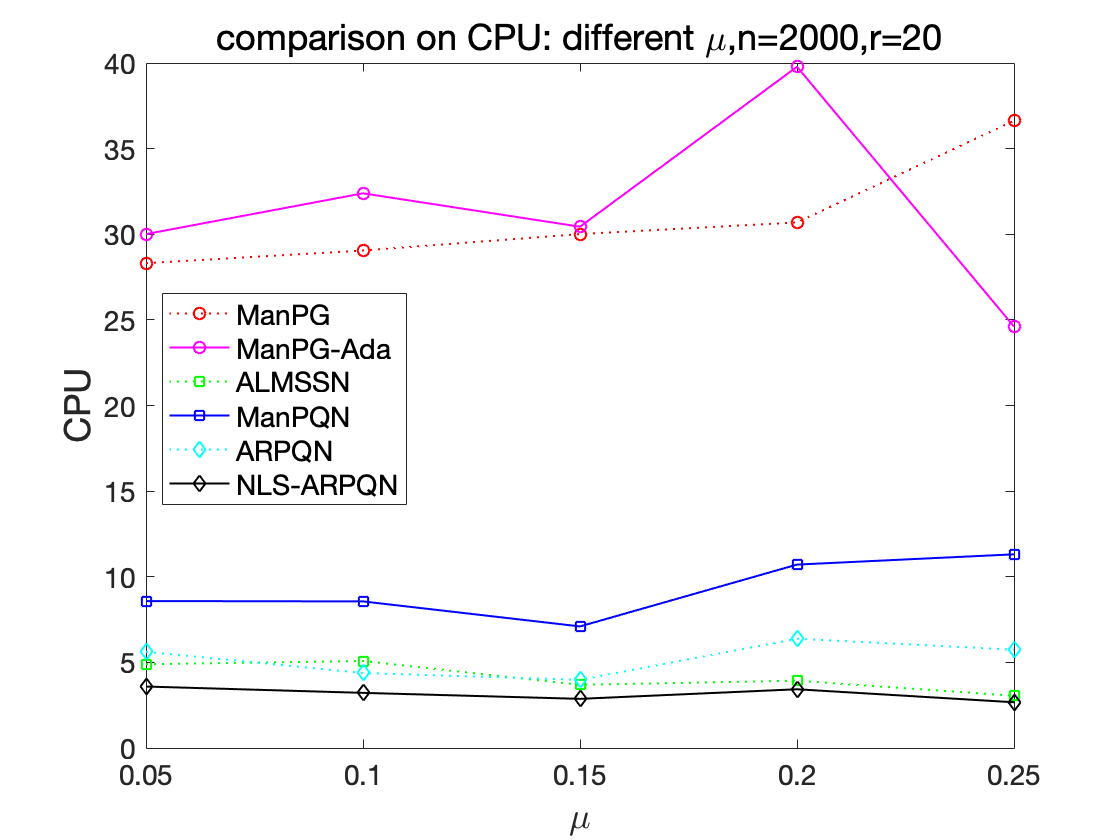}}
		\hspace{-5mm}
	\subfigure[Iter]{
		\includegraphics[width=0.45\linewidth]{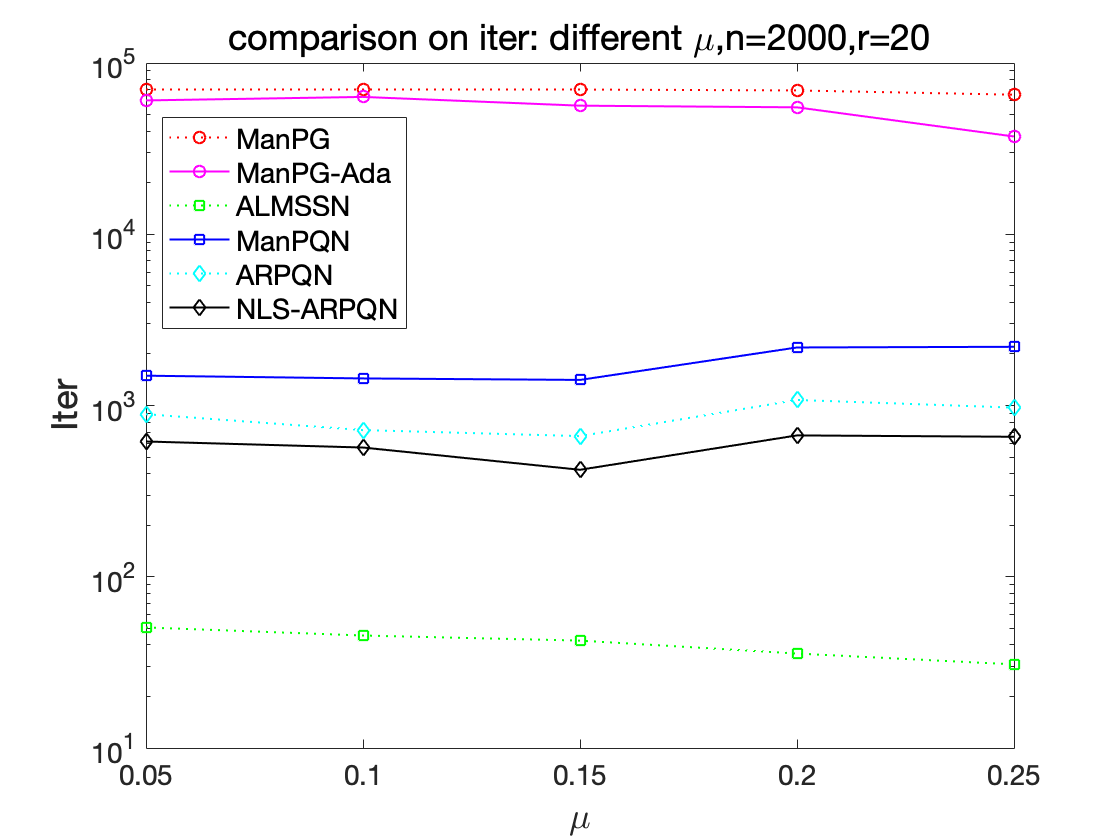}}
	\caption{Comparison on CM problem, different $\mu=\{0.05,0.10,0.15,0.20,0.25\}$ with $n= 1024$ and $r=10$.}\label{fig51_3}
\end{figure}

\begin{table}[htbp]\centering
\caption{Comparison on CM problem, different $n=\{128,256,512,1024,2048\}$ with $r= 10$ and $\mu=0.1$.}\label{tb51_1}
\begin{tabular}{ccccccc}
\midrule
$n=128$ & Iter & $F(X^*)$  & sparsity & CPU time & \# line-search &  SSN iters  \\
\midrule
ManPG     &   3276.68  & 4.815  &    0.73  &   1.1902   &   18.04   &   1.63  \\ 
ManPG-Ada &   1380.96  & 4.815  &    0.73  &   1.5263   &   795.96   &   5.19  \\ 
ALMSSN    &   45.08    & 4.815 &    0.49  &   0.4323   &   -    &    -  \\ 
ManPQN    &   673.44    & 4.825  &    0.73  &   1.0860   &   2240.92   &   7.88  \\ 
ARPQN     &   636.28    & 4.824  &    0.74  &   0.7717   &   2752.68   &   2.04  \\ 
NLS-ARPQN &   442.44    & 4.824  &    0.74  &   0.5016   &   2023.08   &   2.06  \\
\midrule
$n=256$ &   \\
\midrule
ManPG     &   13430.92  & 6.273  &    0.78  &   3.9509   &   260.96   &   0.77  \\ 
ManPG-Ada &   5655.52  & 6.273  &    0.78  &   5.5643   &   3940.16   &   2.94  \\ 
ALMSSN    &   50.88    & 6.272  &    0.58  &   1.4946   &   -    &    -  \\ 
ManPQN    &   1143.76    & 6.316  &    0.76  &   1.9971   &   5873.20   &   4.82  \\ 
ARPQN     &   1448.72    & 6.316  &    0.78  &   2.1861   &   9904.64   &   1.80  \\ 
NLS-ARPQN &   773.68    & 6.315  &    0.78  &   1.1588   &   5026.80   &   1.96  \\ 
\midrule
$n=512$ &   \\
\midrule
ManPG     &   66097.00  & 8.234  &    0.83  &   24.7815   &   2226.66   &   0.40  \\ 
ManPG-Ada &   35692.02  & 8.229  &    0.83  &   15.3766   &   23305.04  &   0.46  \\ 
ALMSSN    &   54.68    & 8.226  &    0.81  &   4.1034   &   -    &    -  \\ 
ManPQN    &   1161.82    & 8.343  &    0.82  &   3.8298   &   6656.54   &   2.37  \\ 
ARPQN     &   845.68    & 8.342  &    0.82  &   3.2429   &   6373.08   &   1.50  \\ 
NLS-ARPQN &   428.96    & 8.341  &    0.82  &   2.1450   &   2943.94   &   1.75  \\ 
\midrule
$n=1024$ &   \\
\midrule
ManPG     &   70001.00  & 10.867  &    0.86  &   27.9484   &   237.54   &   0.21  \\ 
ManPG-Ada &   61260.08  & 10.854  &    0.86  &   31.6759   &   37374.84   &   0.22  \\ 
ALMSSN    &   47.20    & 10.841  &    0.76  &   5.3273   &   -    &    -  \\ 
ManPQN    &   1201.68    & 10.967  &    0.85  &   6.4910   &   7561.10   &   1.96  \\ 
ARPQN     &   695.94    & 11.966  &    0.85  &   4.2407   &   5608.70   &   1.43  \\ 
NLS-ARPQN &   416.98    & 11.964  &    0.85  &   3.9036   &   2217.06   &   1.70  \\ 
\midrule
$n=2048$ &   \\
\midrule
ManPG     &   70001.00  & 14.360  &    0.88  &   36.3397   &   108.26   &   0.14  \\ 
ManPG-Ada &   69439.76  & 14.333  &    0.88  &   47.1851   &   41707.26   &   0.16  \\ 
ALMSSN    &   47.38    & 14.305  &    0.87  &   14.6289   &   -    &    -  \\ 
ManPQN    &   3578.12    & 14.603  &    0.88  &   27.7925   &   24124.88   &   1.11  \\ 
ARPQN     &   1404.62    & 14.600  &    0.88  &   19.9565   &   12084.12   &   1.17  \\ 
NLS-ARPQN &   603.76    & 14.595  &    0.88  &   8.6183   &   4893.88   &   1.62  \\ 
\midrule
\end{tabular}
\end{table}

\begin{table}[htbp]\centering
\caption{Comparison on CM problem, different $r=\{1,4,8,12,16\}$ with $n= 1024$ and $\mu=0.1$.}\label{tb51_22}
\begin{tabular}{ccccccc}
\midrule
$r=1$ & Iter & $F(X^*)$  & sparsity & CPU time & \# line-search &  SSN iters  \\
\midrule
ManPG     &   4795.54  & 1.084  &    0.89  &   0.2246   &   0.00   &   0.19  \\ 
ManPG-Ada &   2117.38  & 1.084  &    0.89  &   0.1412   &   1237.16   &   0.42  \\ 
ALMSSN    &   18.24    & 2.549  &    0.27  &   0.2035   &   -    &    -  \\ 
ManPQN    &   735.46    & 1.084  &    0.89  &   1.7698   &   4663.86   &   1.42  \\ 
ARPQN     &   537.24    & 1.084 &    0.89  &   1.2787   &   4019.86   &   1.13  \\ 
NLS-ARPQN &   320.06    & 1.084  &    0.89  &   0.7938   &   2111.00   &   1.16  \\ 
\midrule
$r=4$ &   \\
\midrule
ManPG     &   30793.08  & 4.337 &    0.88  &   5.2531   &   533.38   &   0.13  \\ 
ManPG-Ada &   13539.54  & 4.337  &    0.88  &   2.9336   &   7939.62   &   0.23  \\ 
ALMSSN    &   41.26    & 4.336  &    0.86  &   0.6555   &   -    &    -  \\ 
ManPQN    &   1240.04    & 4.353  &    0.88  &   4.2306   &   8540.00   &   1.12  \\ 
ARPQN     &   722.38    & 4.352  &    0.88  &   2.5177   &   5998.50   &   1.14  \\ 
NLS-ARPQN &   349.50    & 4.352  &    0.88  &   1.1812   &   2648.12   &   1.31  \\ 
\midrule
$r=8$ &   \\
\midrule
ManPG     &   66426.08  & 8.684  &    0.87  &   21.6938   &   215.64   &   0.26  \\ 
ManPG-Ada &   48678.56  & 8.677  &    0.87  &   20.0030   &   30866.64   &   0.23  \\ 
ALMSSN    &   42.12    & 8.672  &    0.84  &   2.5105   &   -    &    -  \\ 
ManPQN    &   1373.32    & 8.793  &    0.86  &   6.1420   &   8839.28   &   1.53  \\ 
ARPQN     &   597.76    & 8.791  &    0.86  &   3.0916   &   4686.96   &   1.33  \\ 
NLS-ARPQN &   362.80    & 8.790 &    0.86  &   1.8246   &   2656.08   &   1.56  \\ 
\midrule
$r=12$ &   \\
\midrule
ManPG     &   70001.00  & 13.103  &    0.84  &   36.3001   &   145.26   &   0.24  \\ 
ManPG-Ada &   67650.42  & 13.064  &    0.84  &   44.5306   &   39923.42   &   0.24  \\ 
ALMSSN    &   43.08    & 13.045  &    0.83  &   7.3484   &   -    &    -  \\ 
ManPQN    &   1513.42    & 13.391  &    0.85  &   13.5135   &   9749.76   &   2.17  \\ 
ARPQN     &   699.42    & 13.389  &    0.84  &   6.5995   &   5364.58   &   1.57  \\ 
NLS-ARPQN &   372.08    & 13.388  &    0.84  &   3.5000   &   2674.58   &   1.82  \\
\midrule
$r=16$ &   \\
\midrule
ManPG     &   66990.08  & 17.931  &    0.81  &   118.6215   &   16.50   &   2.28  \\ 
ManPG-Ada &   59054.76  & 17.894  &    0.81  &   178.2681   &   34562.08   &   7.34  \\ 
ALMSSN    &   41.34    & 17.889  &    0.80  &   10.2926   &   -    &    -  \\ 
ManPQN    &   1245.68    & 18.231  &    0.83  &   9.8806   &   8011.34   &   2.49  \\ 
ARPQN     &   806.34    & 18.229  &    0.83  &   7.9545   &   6591.18   &   1.58  \\ 
NLS-ARPQN &   441.58    & 18.228  &    0.83  &   5.0373   &   3308.00   &   1.93  \\  
\midrule
\end{tabular}
\end{table}

\begin{table}[htbp]\centering
\caption{Comparison on CM problem, different $\mu=\{0.05,0.10,0.15,0.20,0.25\}$ with $n=1024$ and $r=10$.}\label{tb51_3}
\begin{tabular}{ccccccc}
\midrule
$\mu=0.05$ & Iter & $F(X^*)$  & sparsity & CPU time & \# line-search &  SSN iters  \\
\midrule
ManPG     &   70001.00  & 6.308  &    0.77  &   28.3131   &   101.62   &   0.19  \\ 
ManPG-Ada &   60460.12  & 6.285  &    0.77  &   30.0166   &   35715.62   &   0.20  \\ 
ALMSSN    &   50.58    & 6.276  &    0.62  &   4.8953   &   -    &    -  \\ 
ManPQN    &   1493.02    & 6.467  &    0.80  &   8.5866   &   10019.88   &   2.49  \\ 
ARPQN     &   889.52    & 6.466  &    0.80  &   5.6202   &   7702.38   &   1.56  \\ 
NLS-ARPQN &   616.28    & 6.465  &    0.80  &   3.5975   &   4200.12   &   1.82  \\
\midrule
$\mu=0.10$ &   \\
\midrule
ManPG     &   70001.00  & 10.877  &    0.85  &   29.0577   &   186.50   &   0.22  \\ 
ManPG-Ada &   63374.00  & 10.857  &    0.86  &   32.3954   &   38428.62   &   0.21  \\ 
ALMSSN    &   45.38    & 10.841  &    0.78  &   5.0832   &   -    &    -  \\ 
ManPQN    &   1439.00    & 11.035  &    0.85  &   8.5663   &   9328.62   &   1.94  \\ 
ARPQN     &   718.62    & 11.034  &    0.85  &   4.3811   &   5876.00   &   1.44  \\ 
NLS-ARPQN &   568.26    & 11.033  &    0.85  &   3.2208   &   3612.12   &   1.71  \\
\midrule
$\mu=0.15$ &   \\
\midrule
ManPG     &   70001.00  & 15.013  &    0.88  &   30.0102   &   232.88   &   0.28  \\ 
ManPG-Ada &   56343.26  & 15.001  &    0.89  &   30.4436   &   35345.62   &   0.26  \\ 
ALMSSN    &   42.38    & 14.994  &    0.87  &   3.6974   &   -    &    -  \\ 
ManPQN    &   1410.50    & 15.161  &    0.88  &   7.1015   &   8928.38   &   1.58  \\ 
ARPQN     &   662.76    & 15.159  &    0.88  &   3.9839   &   5187.00   &   1.45  \\ 
NLS-ARPQN &   421.38    & 15.158  &    0.88  &   2.8780   &   2872.50   &   1.72  \\ 
\midrule
$\mu=0.20$ &   \\
\midrule
ManPG     &   69149.76  & 18.898  &    0.90  &   30.6943   &   5361.76   &   0.29  \\ 
ManPG-Ada &   55095.12  & 18.884  &    0.90  &   39.7875   &   97631.38   &   0.27  \\ 
ALMSSN    &   35.62    & 18.873  &    0.88  &   3.9380   &   -    &    -  \\ 
ManPQN    &   2182.00    & 19.101  &    0.89  &   10.7175   &   14111.26   &   1.36  \\ 
ARPQN     &   1077.38    & 19.100  &    0.89  &   6.3806   &   8865.24   &   1.34  \\ 
NLS-ARPQN &   668.88    & 19.098  &    0.89  &   3.4334   &   4450.78   &   1.64  \\  
\midrule
$\mu=0.25$ &   \\
\midrule
ManPG     &   65293.26  & 22.571  &    0.91  &   36.6351   &   67515.76   &   0.26  \\ 
ManPG-Ada &   37221.22  & 22.565  &    0.91  &   24.6111   &   58067.88   &   0.24  \\ 
ALMSSN    &   30.78    & 22.562  &    0.90  &   3.0534   &   -    &    -  \\ 
ManPQN    &   2202.00    & 22.698  &    0.91  &   11.3181   &   14126.38   &   1.23  \\ 
ARPQN     &   971.88    & 22.697  &    0.91  &   5.7411   &   8093.88   &   1.18  \\ 
NLS-ARPQN &   656.74    & 22.696  &    0.91  &   2.6786   &   3343.72   &   1.46  \\ 
\midrule
\end{tabular}
\end{table}

\subsection{Sparse PCA}\label{sec52}

The sparse principal component analysis (sparse PCA) \cite{spca2006} is a statistical problem, which aims to find $r$ ($r<\min\{m,n\}$) principal components with sparse loadings for a given data $A\in\mathbb{R}^{m\times n}$. Sparse PCA problem can be written as 
\begin{equation}
\label{eq:5_spca}
\min_{X\in\mathcal{M}}-tr(X^TA^TAX)+\mu\|X\|_1.
\end{equation}

In the following subsection, we compare the performance of ManPG, ManPG-Ada, ALMSSN, ManPQN, ARPQN and NLS-ARPQN for solving \eqref{eq:5_spca}. The termination conditions and values of parameters are set as those for the CM problems in subsection \ref{sec51}. The matrix $A\in\mathbb{R}^{m\times n}$ is generated by normal distribution with $m=50$. For each case with different $(n,r,\mu)$, experiments are repeated for $50$ times with randomly generated matrices $A$ and initial points, and averaged numerical results for each algorithm are presented below.

\begin{figure}[H]
	\centering  
	\subfigbottomskip=2pt 
	\subfigcapskip=-5pt 
	\subfigure[CPU]{
		\includegraphics[width=0.33\linewidth]{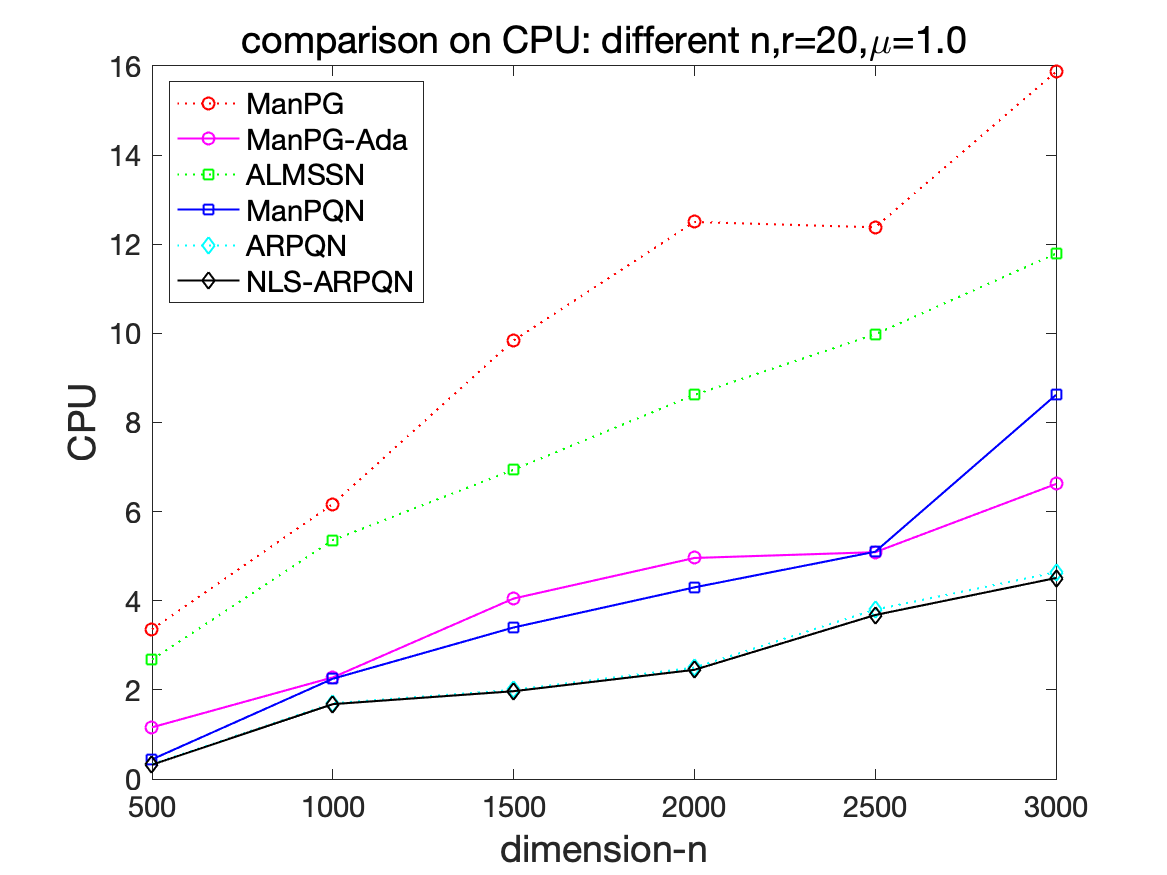}}
		\hspace{-5mm}
	\subfigure[Iter]{
		\includegraphics[width=0.33\linewidth]{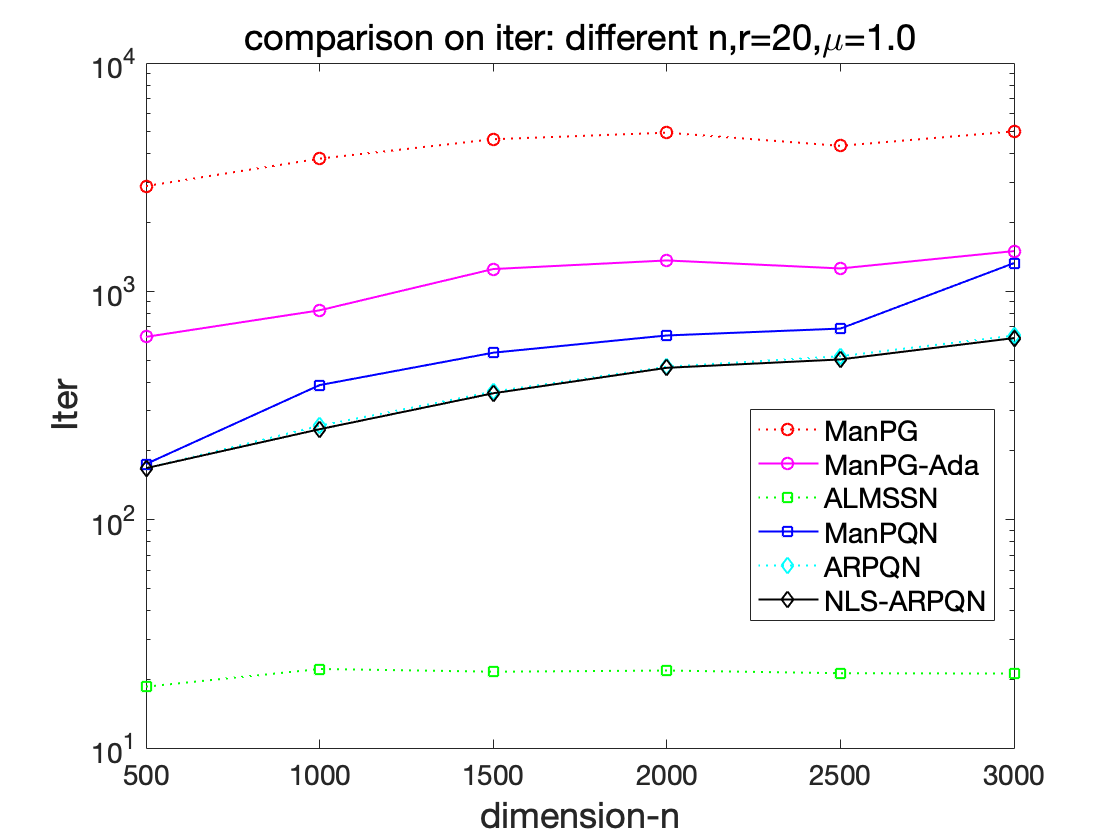}}
		\hspace{-5mm}
	\subfigure[Sparsity]{
		\includegraphics[width=0.33\linewidth]{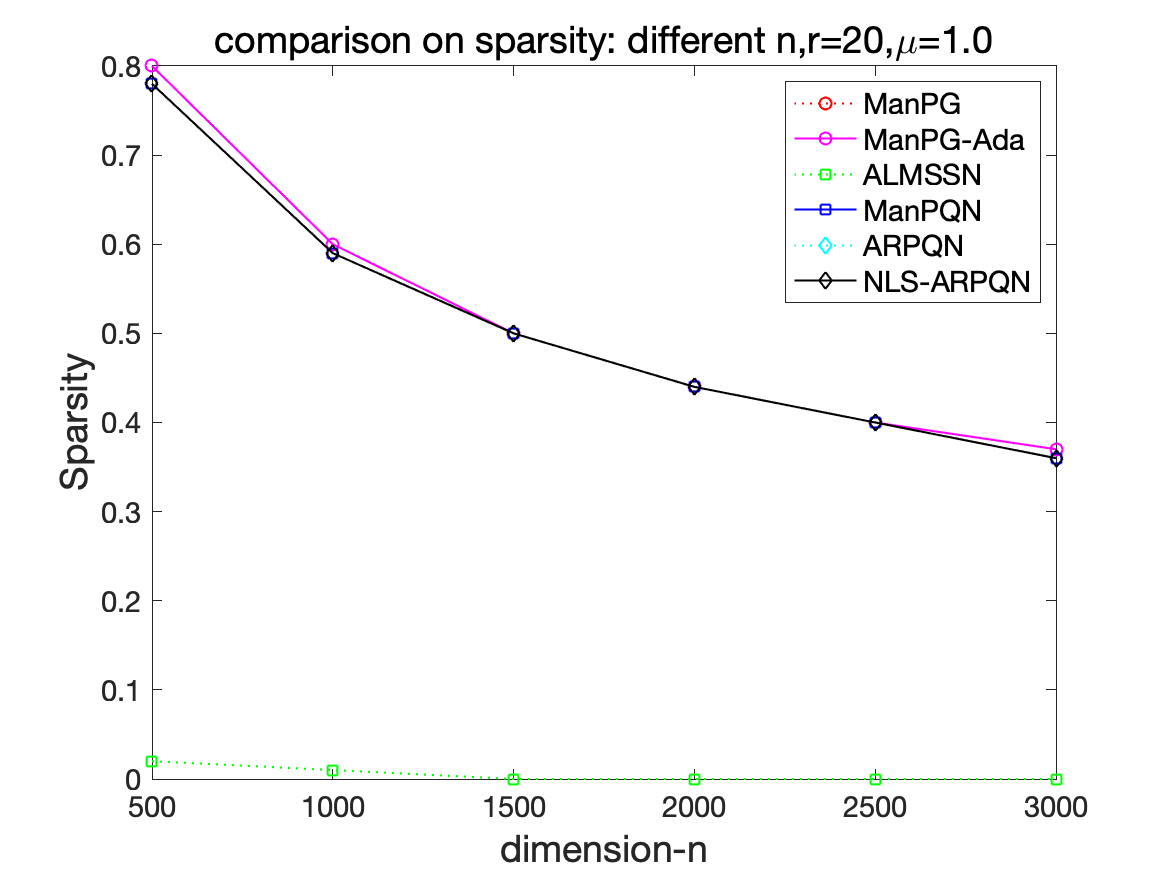}}
	\caption{Comparison on Sparse PCA problem, different $n=\{500, 1000, 1500, 2000, 2500, 3000\}$ with $r= 20$ and $\mu=1.0$.}\label{fig52_1}
\end{figure}

\begin{figure}[H]
	\centering  
	\subfigbottomskip=2pt 
	\subfigcapskip=-5pt 
	\subfigure[CPU]{
		\includegraphics[width=0.33\linewidth]{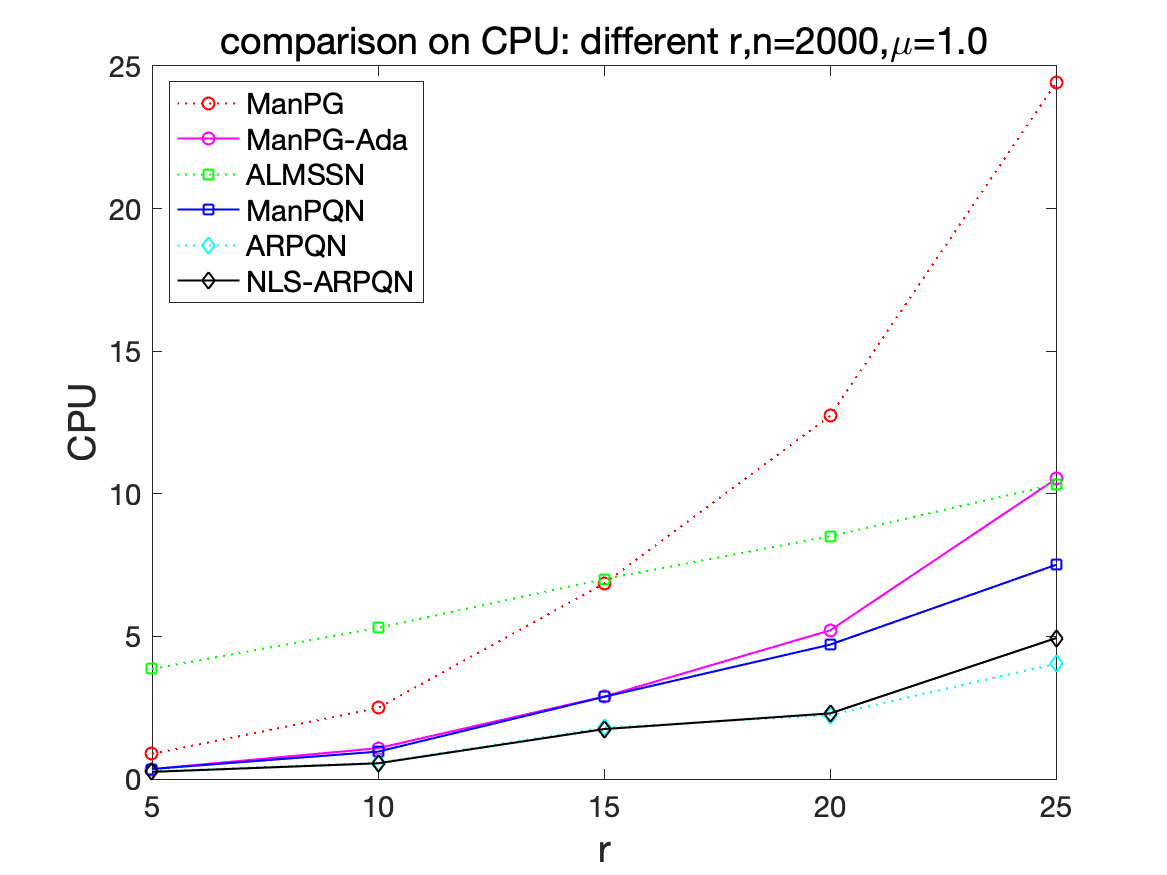}}
		\hspace{-5mm}
	\subfigure[Iter]{
		\includegraphics[width=0.33\linewidth]{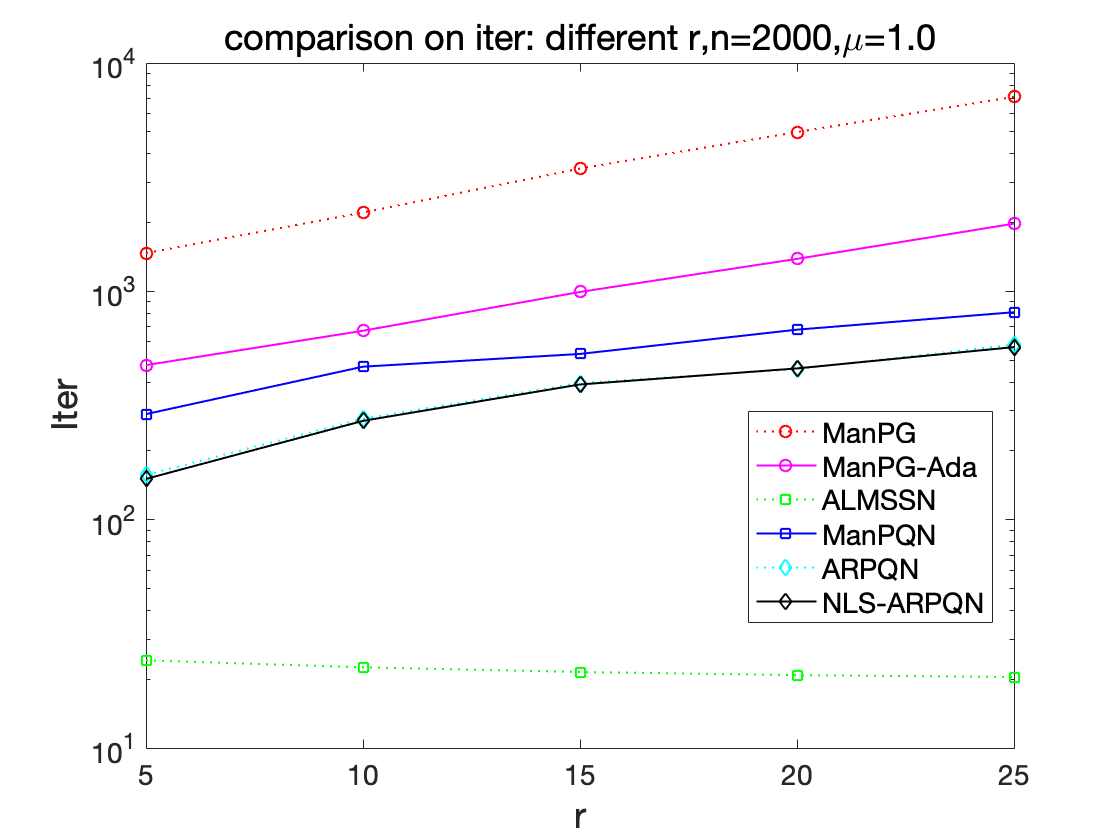}}
		\hspace{-5mm}
	\subfigure[Sparsity]{
		\includegraphics[width=0.33\linewidth]{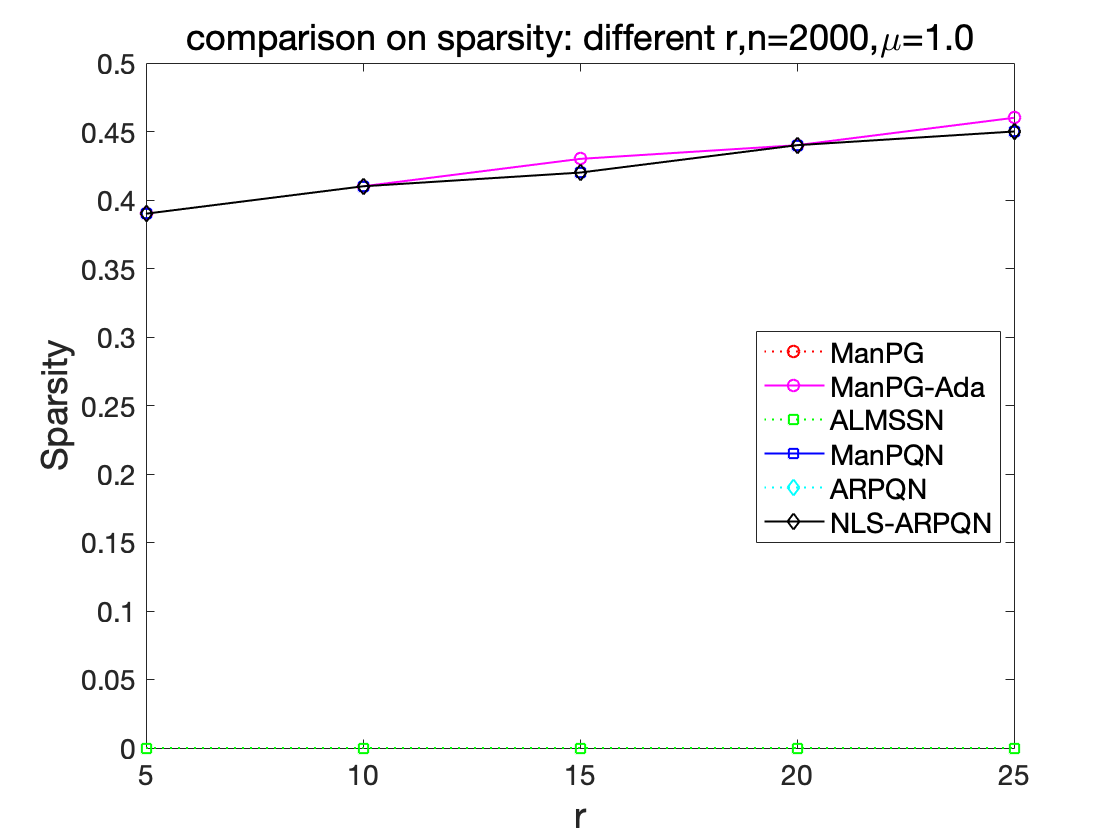}}
	\caption{Comparison on Sparse PCA problem, different $r=\{5,10,15,20,25\}$ with $n= 2000$ and $\mu=1.0$.}\label{fig52_2}
\end{figure}

\begin{figure}[H]
	\centering  
	\subfigbottomskip=2pt 
	\subfigcapskip=-5pt 
	\subfigure[CPU]{
		\includegraphics[width=0.33\linewidth]{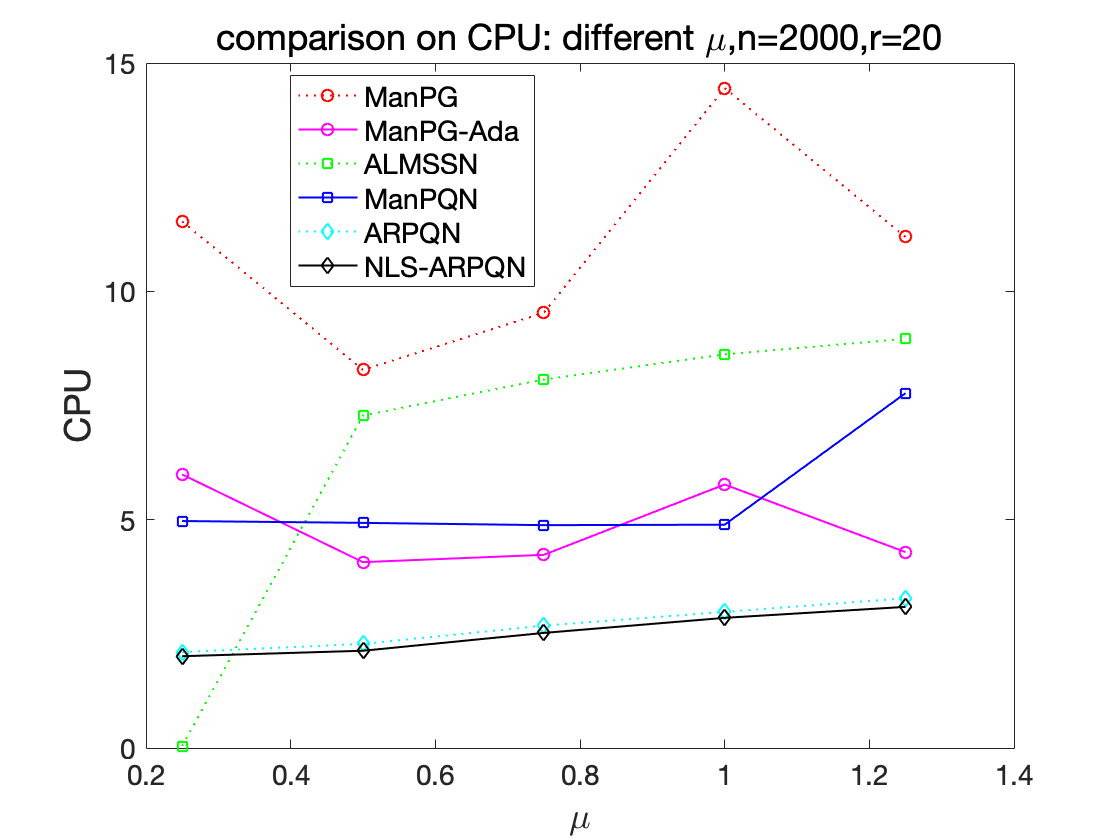}}
		\hspace{-5mm}
	\subfigure[Iter]{
		\includegraphics[width=0.33\linewidth]{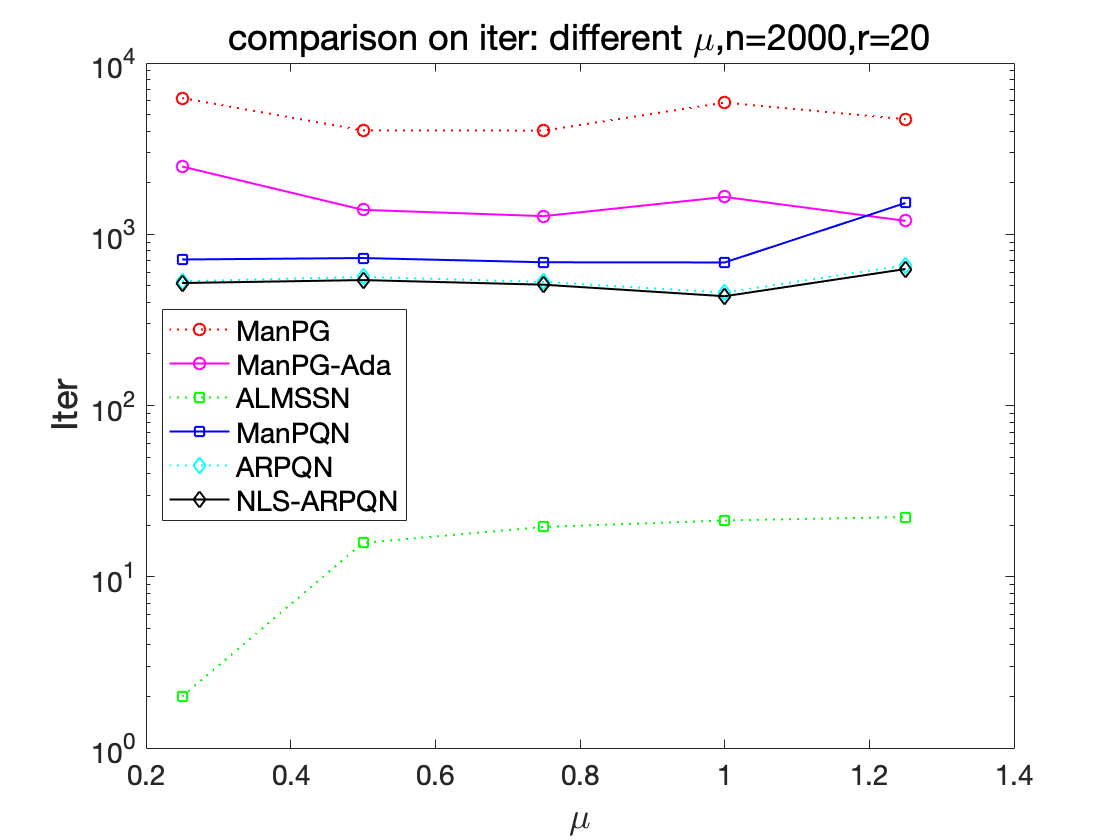}}
		\hspace{-5mm}
	\subfigure[Sparsity]{
		\includegraphics[width=0.33\linewidth]{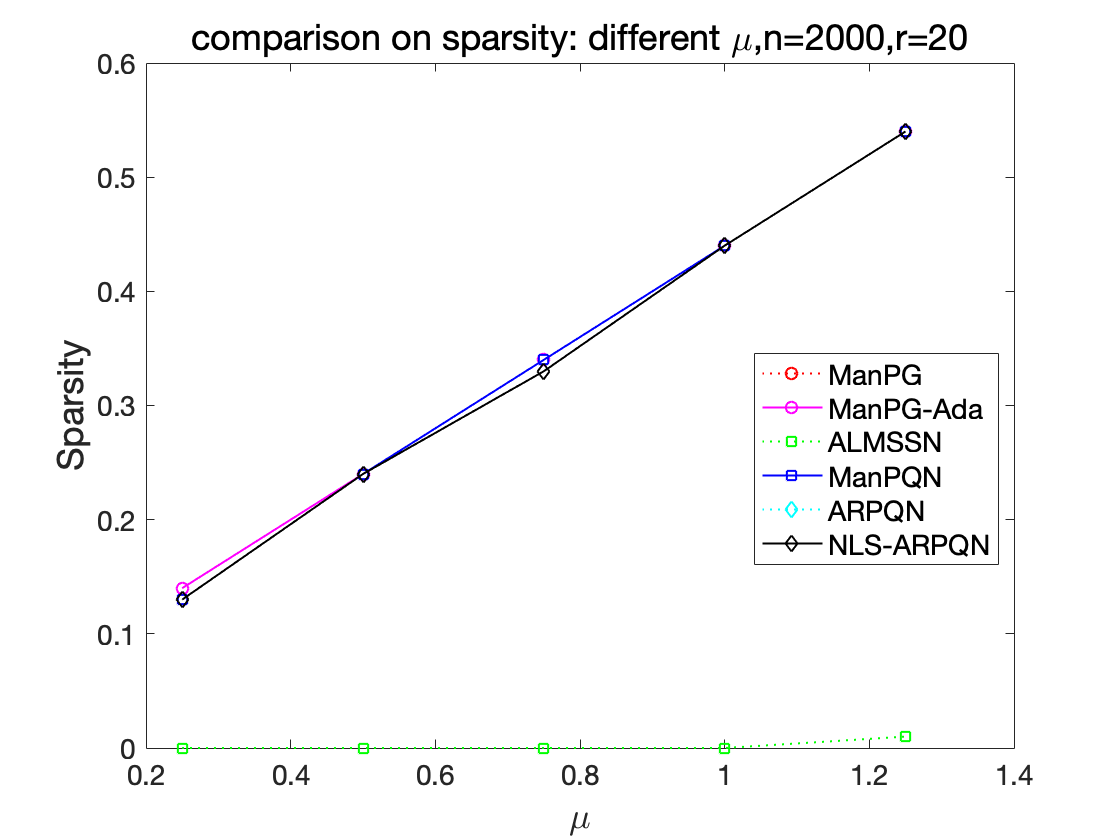}}
	\caption{Comparison on Sparse PCA problem, different $\mu=\{0.25,0.50,0.75,1.0,1.25\}$ with $n= 2000$ and $r=20$.}\label{fig52_32}
\end{figure}

Figures \ref{fig52_1}-\ref{fig52_32} and Tables \ref{tb52_1}-\ref{tb52_32} show the numerical performance of ManPG, ManPG-Ada, ALMSSN, ManPQN, ARPQN and NLS-ARPQN on the sparse PCA problems with different $n$, $r$ and $\mu$.
We can observe that NLS-ARPQN outperforms other algorithms in terms of iteration number and running time, especially when $n$ and $r$ are large. 
In some cases, ManPG and ManPG-Ada can achieve an optimal solution with slightly better sparsity than NLS-ARPQN and other algorithms.
Moreover, we report the total number of line search and the averaged iteration number of ASSN in the last two columns of Tables \ref{tb52_1}-\ref{tb52_32}. 
It is observed that NLS-ARPQN requires less averaged line search steps and ASSN iterations than ManPQN, which shows that the regularization technique can accelerate the convergence of our method and reduce the computational cost of solving the subproblem.
In most cases, NLS-ARPQN runs faster than ARPQN slightly.

\begin{table}[htbp]\centering
\caption{Comparison on Sparse PCA problem, different $n= \{500,1000,1500,2000,2500,3000\}$ with $r= 20$ and $\mu=1.0$.}\label{tb52_1}
\begin{tabular}{ccccccc}
\midrule
$n=500$ & Iter & $F(X^*)$  & sparsity & CPU time & \# line-search &  SSN iters  \\
\midrule
ManPG     &   2890.90  & -18.906  &    0.80  &   3.3681   &   8.90   &   1.17  \\ 
ManPG-Ada &   631.70  & -18.906  &    0.80  &   1.1635   &   268.28   &   1.99  \\ 
ALMSSN    &   18.56    & -19.030  &    0.02  &   2.6851   &   -    &    -  \\ 
ManPQN   &   175.28  & -18.449  &    0.78  &   0.4435   &   614.78   &   2.55  \\ 
ARPQN     &   167.80  & -18.431  &    0.78  &   0.3361   &   301.73   &   2.17  \\ 
NLS-ARPQN &   168.06  & -18.434  &    0.78  &   0.3232   &   329.08   &   2.17  \\ 
\midrule
$n=1000$ &   \\
\midrule
ManPG     &   3812.10  & -101.422  &    0.60  &   6.1787   &   0.06   &   1.09  \\ 
ManPG-Ada &   925.78  & -101.422  &    0.60  &   2.2805   &   528.32   &   1.81  \\ 
ALMSSN    &   22.18    & -100.600  &    0.01  &   5.3643   &   -    &    -  \\ 
ManPQN   &   388.42  & -100.620  &    0.59  &   2.2520   &   1424.52   &   2.39  \\ 
ARPQN     &   257.74  & -100.622  &    0.59  &   1.7096   &   440.42   &   2.17  \\ 
NLS-ARPQN &   248.96  & -100.626  &    0.59  &   1.6824   &   436.80   &   2.18  \\ 
\midrule
$n=1500$ &   \\
\midrule
ManPG     &   4635.64  & -212.686  &    0.50  &   9.8426   &   116.68   &   1.08  \\ 
ManPG-Ada &   1252.00  & -212.686  &    0.50  &   4.0527   &   748.94   &   1.75  \\ 
ALMSSN    &   21.58    & -211.680  &    0.00  &   6.9429   &   -    &    -  \\ 
ManPQN   &   538.56  & -211.531  &    0.50  &   3.4023   &   1260.08   &   2.85  \\ 
ARPQN     &   363.58  & -211.534  &    0.50  &   2.0068   &   543.44   &   2.17  \\ 
NLS-ARPQN &   357.66  & -211.536  &    0.50  &   1.9726   &   539.30   &   2.18  \\ 
\midrule
$n=2000$ &   \\
\midrule
ManPG     &   4953.10  & -336.927  &    0.44  &   12.5064   &   0.00   &   1.07  \\ 
ManPG-Ada &   1365.54  & -336.887  &    0.44  &   4.9687   &   10708.18   &   1.63  \\ 
ALMSSN    &   21.84    & -335.940  &    0.00  &   8.6223   &   -    &    -  \\ 
ManPQN   &   640.80  & -335.563  &    0.44  &   4.3081   &   1390.92   &   2.48  \\ 
ARPQN     &   467.62  & -335.566  &    0.44  &   2.5867   &   747.86   &   2.17  \\ 
NLS-ARPQN &   462.50  & -335.570  &    0.44  &   2.4501   &   746.02   &   2.17  \\ 
\midrule
$n=2500$ &   \\
\midrule
ManPG     &   4336.80  & -471.822  &    0.40  &   12.3885   &   0.00   &   1.07  \\ 
ManPG-Ada &   1259.64  & -471.822  &    0.40  &   5.0965   &   727.08   &   1.66  \\ 
ALMSSN    &   21.26    & -470.634  &    0.00  &   9.9808   &   -    &    -  \\ 
ManPQN   &   686.84  & -470.770  &    0.40  &   5.1039   &   1237.54   &   2.21  \\ 
ARPQN     &    519.78  & -470.772  &    0.40  &   3.8117   &   810.44   &   2.12  \\ 
NLS-ARPQN &  503.22  & -470.774  &    0.40  &   3.6804   &   810.14   &   2.13  \\ 
\midrule
$n=3000$ &   \\
\midrule
ManPG     &   5017.68  & -612.863  &    0.37  &   15.8727   &   0.00   &   1.05  \\ 
ManPG-Ada &   1500.76  & -612.822  &    0.37  &   6.6238   &   5928.24   &   1.59  \\ 
ALMSSN    &   21.18    & -611.560  &    0.00  &   11.7954   &   -    &    -  \\ 
ManPQN   &   1329.38  & -611.847  &    0.36  &   8.6239   &   1582.10   &   1.32  \\ 
ARPQN     &    640.04  & -611.849  &    0.36  &   4.6484   &   886.12   &   2.10  \\ 
NLS-ARPQN &  624.00  & -611.851  &    0.36  &   4.5124   &   885.96   &   2.11  \\  
\midrule
\end{tabular}
\end{table}

\begin{table}[htbp]\centering
\caption{Comparison on Sparse PCA problem, different $r=\{5,10,15,20,25\}$ with $n= 2000$ and $\mu=1.0$.}\label{tb52_2}
\begin{tabular}{ccccccc}
\midrule
$r=5$ & Iter & $F(X^*)$  & sparsity & CPU time & \# line-search &  SSN iters  \\
\midrule
ManPG     &   1470.40  & -100.375  &    0.39  &   0.8873   &   46.24   &   1.01  \\ 
ManPG-Ada &   475.20  & -100.375  &    0.39  &   0.3569   &   192.44   &   1.18  \\ 
ALMSSN    &   24.20    & -100.030  &    0.00  &   3.8681   &   -    &    -  \\ 
ManPQN   &   289.96  & -100.259  &    0.39  &   0.3530   &   567.60   &   1.79  \\ 
ARPQN     &   157.00  & -100.260  &    0.39  &   0.2783   &   434.36   &   1.73  \\ 
NLS-ARPQN &   150.84  & -100.260  &    0.39  &   0.2555   &   435.28   &   1.73  \\ 
\midrule
$r=10$ &   \\
\midrule
ManPG     &   2214.00  & -188.350  &    0.41  &   2.4909   &   0.00   &   1.04  \\ 
ManPG-Ada &   672.08  & -188.350  &    0.41  &   1.0854   &   325.08   &   1.56  \\ 
ALMSSN    &   22.56    & -188.040  &    0.00  &   5.3061   &   -    &    -  \\ 
ManPQN   &   467.76  & -187.900  &    0.41  &   0.9634   &   802.20   &   2.53  \\ 
ARPQN     &   276.64  & -187.901  &    0.41  &   0.5753   &   568.40   &   2.06  \\  
NLS-ARPQN &   270.88  & -187.901  &    0.41  &   0.5537   &   565.68   &   2.06  \\
\midrule
$r=15$ &   \\
\midrule
ManPG     &   3448.88  & -267.409  &    0.43  &   6.8556   &   0.00   &   1.05  \\ 
ManPG-Ada &   993.96  & -267.409  &    0.43  &   2.8994   &   541.72   &   1.61  \\ 
ALMSSN    &   21.52    & -266.50  &    0.00  &   7.0158   &   -    &    -  \\ 
ManPQN   &   531.88  & -266.610  &    0.42  &   2.8864   &   938.04   &   2.84  \\ 
ARPQN     &   396.68  & -266.612  &    0.42  &   1.8022   &   638.60   &   2.17  \\
NLS-ARPQN &   390.96  & -266.612  &    0.42  &   1.7595   &   636.60   &   2.17  \\ 
\midrule
$r=20$ &   \\
\midrule
ManPG     &   4983.54  & -337.312  &    0.44  &   12.7328   &   0.00   &   1.05  \\ 
ManPG-Ada &   1387.32  & -337.312 &    0.44  &   5.2116   &   815.38   &   1.68  \\ 
ALMSSN    &   20.86    & -336.220  &    0.00  &   8.5104   &   -    &    -  \\ 
ManPQN   &   679.46  & -335.902  &    0.44  &   4.7155   &   1125.00   &   2.48  \\ 
ARPQN     &   456.54  & -335.904  &    0.44  &   2.2499   &   740.00   &   2.18  \\ 
NLS-ARPQN &   459.62  & -335.907  &    0.44  &   2.3021   &   744.32   &   2.17  \\ 
\midrule
$r=25$ &   \\
\midrule
ManPG     &   7111.38  & -400.625  &    0.46  &   24.4102   &   0.00   &   1.08  \\ 
ManPG-Ada &   1979.08  & -400.625  &    0.46  &   10.5233   &   1308.16   &   1.73  \\ 
ALMSSN    &   20.46    & -398.470  &    0.00  &   10.3405   &   -    &    -  \\ 
ManPQN   &   810.16  & -398.337  &    0.45  &   7.5133   &   1488.78   &   2.39  \\ 
ARPQN     &   584.92  & -398.340  &    0.45  &   4.0411   &   957.70   &   2.16  \\ 
NLS-ARPQN &   570.00  & -398.345  &    0.45  &   4.9429   &   952.78  &   2.17  \\ 
\midrule
\end{tabular}
\end{table}

\begin{table}[htbp]\centering
\caption{Comparison on Sparse PCA problem, different $\mu=\{0.25,0.50,0.75,1.0,1.25\}$ with $n= 2000$ and $r=20$.}\label{tb52_32}
\begin{tabular}{ccccccc}
\midrule
$\mu=0.25$ & Iter & $F(X^*)$  & sparsity & CPU time & \# line-search &  SSN iters  \\
\midrule
ManPG     &   6243.92  & -777.077  &    0.14  &   11.5347   &   0.00   &   1.01  \\ 
ManPG-Ada &   2485.24  & -777.077  &    0.14  &   5.9953   &   1513.24   &   1.34  \\ 
ALMSSN    &   2.00    & -775.770  &    0.00  &   0.0468   &   -    &    -  \\ 
ManPQN   &   712.16  & -776.669  &    0.13  &   4.9737   &   1391.62   &   1.92  \\ 
ARPQN     &  530.68  & -776.669  &    0.13  &   2.1068   &   1197.62   &   2.23  \\ 
NLS-ARPQN &   519.00  & -776.670  &    0.13  &   2.0125   &   1178.62   &   2.20  \\ 
\midrule
$\mu=0.50$ & \\
\midrule
ManPG     &   4039.54  & -618.884  &    0.24  &   8.2814   &   0.00   &   1.03  \\ 
ManPG-Ada &   1388.08  & -619.001  &    0.24  &   4.0700   &   19141.54   &   1.47  \\ 
ALMSSN    &   15.78    & -618.200  &    0.00  &   7.2838   &   -    &    -  \\ 
ManPQN   &   726.32  & -618.395  &    0.24  &   4.9301   &   1484.68   &   2.28  \\ 
ARPQN     &   563.54  & -618.395  &    0.24  &   2.2898   &   1080.78   &   2.22  \\ 
NLS-ARPQN &   540.08  & -618.396  &    0.24  &   2.1372   &   1061.86   &   2.24  \\  
\midrule
$\mu=0.75$ & \\
\midrule
ManPG     &   4029.62  & -472.161  &    0.34  &   9.5323   &   0.00   &   1.05  \\ 
ManPG-Ada &   1275.92  & -472.087  &    0.34  &   4.2344   &   20258.46   &   1.47  \\ 
ALMSSN    &   19.54    & -471.790  &    0.00  &   8.0744   &   -    &    -  \\ 
ManPQN   &   686.00  & -471.759  &    0.34  &   4.8830   &   1333.46   &   2.52  \\ 
ARPQN     &   526.46  & -471.759  &    0.33  &   2.6849   &   983.46   &   2.28  \\
NLS-ARPQN &    507.38  & -471.760  &    0.33  &   2.5220   &   977.00   &   2.30  \\ 
\midrule
$\mu=1.0$ & \\
\midrule
ManPG     &   5873.16  & -336.964  &    0.44  &   14.4531   &   0.00   &   1.06  \\ 
ManPG-Ada &   1651.46  & -336.867  &    0.44  &   5.7722   &   39999.70   &   1.37  \\ 
ALMSSN    &   21.32    & -335.770  &    0.00  &   8.6273   &   -    &    -  \\ 
ManPQN   &   684.24  & -336.097  &    0.44  &   4.8974   &   1268.78   &   2.48  \\ 
ARPQN     &   456.70  & -336.099  &    0.44  &   2.9879   &   722.78   &   2.16  \\ 
NLS-ARPQN &   434.54  & -336.100  &    0.44  &   2.8547   &   720.16   &   2.18  \\ 
\midrule
$\mu=1.25$ & \\
\midrule
ManPG     &   4696.16  & -214.463  &    0.54  &   11.1935   &   0.16   &   1.08  \\ 
ManPG-Ada &   1197.70  & -214.463  &    0.54  &   4.2976   &   727.24   &   1.76  \\ 
ALMSSN    &   22.32    & -212.050  &    0.01  &   8.9605   &   -    &    -  \\ 
ManPQN   &   1522.92  & -213.368  &    0.54  &   7.7783   &   1745.92   &   1.48  \\ 
ARPQN     &    658.42  & -213.369  &    0.54  &   3.2862   &   1213.92   &   2.52  \\ 
NLS-ARPQN &   626.58  & -213.380  &    0.54  &   3.0959   &   1054.68   &   2.59  \\ 
\midrule
\end{tabular}
\end{table}

\section{Conclusion}
\label{sec:6}

In this paper, we propose two adaptive regularized proximal Newton-type methods, ARPQN and ARPN, for the composite optimization problem \eqref{eq:prob} over the Stiefel manifold. The ARPQN method can be regarded as a variant of the ManPQN algorithm proposed in \cite{manpqn2023}. Specifically, at each iterate, the quadratic model, used in the proximal mapping of ARPQN, is formed by adding a regularization term to that used in ManPQN.
This adaptive regularization strategy can be used to reduce the overall computational cost of solving \eqref{eq:prob}.
Analysis of the global convergence and the iteration complexity of ARPQN is established, and the local linear convergence rate is proved under the strong convexity assumption on the objective function. Numerical results demonstrate that the adaptive regularization strategy can be used to accelerate the proximal quasi-Newton method.
The subproblem of ARPN is formed by replacing the term $h(X_k+V)$ by $h(\mathbf{R}_{X_k}(V))$ in \eqref{eq:sub_prob}.
We establish the global convergence and the local superlinear convergence of ARPN.
We only present the numerical results of ARPQN
since solving the subproblem of ARPN is so expensive that the total computational cost of ARPN is considerably high.

As shown in the numerical experiments, the computational cost of ARPQN mainly lies in solving the subproblem \eqref{eq:sub_prob} by the ASSN method, which grows rapidly as the dimensions $n$ and $r$ of the problem increase. One topic of our future work is to design a first-order method to solve the subproblem. This paper only focuses on composite optimization over the Stiefel manifold. Naturally, another topic of our future work is to extend ARPQN and ARPN to general Riemannian manifolds.
Additionally, it would be valuable to further investigate the practical implementation of ARPN in solving large-scale composite optimization problems.

\bmhead{Acknowledgments}

The work of Wei Hong Yang was supported by the National Natural Science Foundation of China grant 72394365.
%
%Acknowledgments are not compulsory. Where included they should be brief. Grant or contribution numbers may be acknowledged.
%
%Please refer to Journal-level guidance for any specific requirements.

\bmhead{Data Availability}

The data that support the findings of this study are available from the corresponding author upon request.

\bmhead{Conflict of Interest}

The authors have no relevant financial or non-financial interests to disclose.

%%=============================================%%
%% For submissions to Nature Portfolio Journals %%
%% please use the heading ``Extended Data''.   %%
%%=============================================%%

%%=============================================================%%
%% Sample for another appendix section			       %%
%%=============================================================%%

%% \section{Example of another appendix section}\label{secA2}%
%% Appendices may be used for helpful, supporting or essential material that would otherwise 
%% clutter, break up or be distracting to the text. Appendices can consist of sections, figures, 
%% tables and equations etc.

%\end{appendices}

%%===========================================================================================%%
%% If you are submitting to one of the Nature Portfolio journals, using the eJP submission   %%
%% system, please include the references within the manuscript file itself. You may do this  %%
%% by copying the reference list from your .bbl file, paste it into the main manuscript .tex %%
%% file, and delete the associated \verb+\bibliography+ commands.                            %%
%%===========================================================================================%%
\bibliographystyle{plain}
\nocite{proxmanifold2005}

\bibliography{ref_ARPQN}% common bib file
%% if required, the content of .bbl file can be included here once bbl is generated
%%\input sn-article.bbl

%% Default %%
%%\input sn-sample-bib.tex%

%\section*{Declarations}
%
%The work of Wei Hong Yang was supported by the National Natural Science Foundation of China grant 11971118.

\end{document}